\newcommand{\R}{\mathbb{R}}
\newcommand{\polylog}{\mathop{polylog}}
\newenvironment{customthm}[1]{\innercustomthm}
  {\endinnercustomthm}
\newcounter{dummy}
\newtheorem{assumption}[dummy]{Assumption}
\DeclareMathOperator{\tw}{tw}
\newcommand{\CI}{\mathrel{\perp\mspace{-10mu}\perp}}
\def\IND{\mathbbm{1}}
\newcommand{\wh}{\widehat}
\newcommand{\argmin}{\mathop{\mathrm{argmin}}}
\newcommand{\X}{\mathcal{X}}
\newcommand{\EXP}{\mathbb{E}}
\newcommand{\PROB}{\mathbb{P}}
\newcommand{\var}{\mathrm{Var}}
  \newcommand{\bc}{\rm bc}
  \newcommand{\cA}{\mathcal{A}}
 \newcommand{\cB}{\mathcal{B}}
  \newcommand{\cC}{\mathcal{C}}
  \newcommand{\cD}{\mathcal{D}}
    \newcommand{\cS}{\mathcal{S}}
    \newcommand{\cT}{\mathcal{T}}
\begin{document}

\title{Learning partial correlation graphs and graphical models by covariance queries}

\author{\name G\'abor Lugosi \email gabor.lugosi@upf.edu \\
       \addr ICREA, Pg. Llu\'{\i}s Companys 23, \\
       08010 Barcelona, Spain; \\ 
       \addr Department of Economics and Business\\
       Pompeu Fabra University; and \\
       Barcelona School of Economics\\
       \AND
       \name Jakub Truszkowski \email jakub.truszkowski@bioenv.gu.se \\
       \addr LIRMM, CNRS\\ Universit{\'e} de Montpellier\\ Montpellier, France, and\\
       Department of Biological and Environmental Sciences and\\
       Gothenburg Global Biodiversity Centre \\
       University of Gothenburg \\
       Gothenburg, Sweden 
       \AND
       \name Vasiliki Velona \email vasiliki.velona@upf.edu \\
       \addr Department of Economics and Business\\
       Pompeu Fabra University; and\\
       Einstein Institute of Mathematics\\ Hebrew University of Jerusalem
\AND
       \name Piotr Zwiernik \email piotr.zwiernik@upf.edu \\
       \addr Department of Economics and Business\\
       Pompeu Fabra University; and\\
       Barcelona School of Economics\\
       Barcelona, Spain
       }

\editor{David Sontag}
\maketitle

\begin{abstract}
We study the problem of recovering the structure underlying large
  Gaussian graphical models or, more generally, partial correlation graphs.  In high-dimensional problems it is often too costly to store the entire sample covariance matrix. We propose
  a new input model in which one can query single entries of the 
  covariance matrix. We prove that it is possible to recover the support of the inverse covariance matrix with
  low query and computational complexity. Our algorithms work in a regime when this support is represented by tree-like
  graphs and, more generally, for graphs of small
  treewidth. Our results demonstrate that for large classes of graphs,
  the structure of the corresponding partial correlation graphs can be
  determined much faster than even computing the empirical covariance matrix.\end{abstract}

\begin{keywords}
  Gaussian graphical models, partial correlation graphs, structure learning, high-dimensional structures
\end{keywords}

\section{Introduction}

Learning the graph structure underlying probabilistic graphical models
is a problem with a long history; see \cite{drton2017structure} for a recent exposition. In the
classical setting, when the number $n$ of variables is reasonably
small, this can be done by using stepwise selection procedures based
on information criteria like BIC, AIC, or using the likelihood
function; see \cite[Section 4.4]{hojsgaard2012graphical} for a
discussion.

In high-dimensional scenarios the methods proposed for Gaussian
graphical models have become particularly successful. Here the graphs are
encoded by zeros in the inverse covariance matrix (or \emph{precision matrix}) $K$. Specifically, 
an edge is present in the graph if and only if the corresponding element of $K$
is not zero and so LASSO-type learning procedures can be applied
\cite{banerjee2008model,yuan2007model}. The link between the entries
of $K$ and coefficients obtained by linearly regressing one variable
against the rest gave rise to the so-called neighbor selection
methods, see \cite{meinshausen2010stability}. In all these theoretical
developments, the sample complexity required for learning the
underlying graph is well understood. On the other hand, in these
studies either computational issues played a secondary role or the
computational budget was relatively large as all known methods require computing the sample covariance; see, for example, \cite{cai2016estimating,dasarathy2016active}. \cite{hsieh2013big} perform a careful analysis of the optimization objective used in earlier methods, which leads to a divide-and-conquer algorithm that can be applied to large data sets. More recently, \cite{zhang2018large} devised another scalable procedure based on thresholding the sample covariance matrix followed by a novel optimization step. However, the computational complexity of these approaches is still of at least quadratic order
as a function of the number of variables.

In a growing number of applications, the number of variables $n$ is so
large that a computational cost of order $n^2$ becomes
prohibitive. This means that even writing down or storing the covariance matrix 
(or an estimate of it) is not practical, rendering all aforementioned 
approaches unfeasible.
Such examples occur in some applications in biology, such as the problem of
reconstructing gene regulatory networks from large scale gene
expression data. \cite{hwang} give an extensive discussion of computational challenges of massive amounts of gene expression data and note that issues of computational complexity made researchers rely on pairwise notions of dependence; see, for example, \cite{Chan082099,zhang2011inferring}. Scalable algorithms are also of interest in phylogenetics, where the problem is to reconstruct the evolutionary relationships between tens to hundreds of thousands of DNA sequences (\cite{price2010fasttree,brown2012fast,brown2011fast}).
Another example leading to large networks is building human brain functional connectivity networks using functional
MRI data. In this setting, the data are usually aggregated to obtain a data set with a moderate number of variables that can be processed with current algorithms \cite{huang2010learning}. More efficient methods to build large networks will allow researchers to study functional MRI data at a much higher resolution.


In this paper we address the problem that quadratic complexity becomes prohibitive in modern
large-scale applications. This requires a different approach to
structure recovery, which addresses the computational issues much more
carefully. This computationally conscious approach has become more
popular in the recent years where, in selected scenarios, it was
possible to study the trade-off between statistical accuracy and
computational complexity, see
\cite{chandrasekaran2013computational,rudi2015less}. In the main part
of the paper we abstract away from statistical considerations and formulate
the following mathematical problem. Given a symmetric positive definite matrix $\Sigma$ the goal is to:
\begin{tcolorbox}
\centering
	Learn the support of $K=\Sigma^{-1}$ observing only a small fraction of $\Sigma$.
\end{tcolorbox}

 More precisely, we are interested in learning $K$ based on a small number of adaptively selected entries of $\Sigma$. 
As a main motivating example, we may think about $\Sigma$ as a covariance matrix of some underlying random vector $X=(X_1,\ldots,X_n)$, that is, the entries of $\Sigma$ are the covariances $\sigma_{ij}=\EXP[(X_i-\EXP X_i) (X_j-\EXP X_j)]$. Although formulated on the population level, our result is a prerequisite for consistently learning the graph from data based on $o(n^2)$ entries of the sample covariance matrix. Our results show that, for data sets where covariances can be estimated with sufficient accuracy, often it is possible to recover the underlying graphical model in much less time than what is required by current approaches. 

In order to formalize this approach, we propose the following input model for our analysis. The data can be accessed through queries to a \emph{covariance oracle}. The covariance oracle takes a pair of indices $i,j \in [n]$ as an input and outputs the corresponding entry 
$\sigma_{ij}$ of the matrix $\Sigma$. This is an idealized scenario that
makes the main ideas of this paper more transparent. In practice, of course, these covariances are not exactly available as
they are often estimated from data. This setup is meaningful in applications in which one may estimate, relatively easily and accurately,
the covariance between any given pair of variables. Importantly, one does not need to estimate the entire
covariance matrix. In Section \ref{sec:finite} we discuss conditions under which the idealized covariance oracle
may be replaced by a noisy version.

The \emph{query complexity} of an algorithm is the number of entries of the covariance matrix $\Sigma$
queried during the execution of the algorithm. The main findings of the paper show that, in many nontrivial cases,
the graph underlying the graphical model of $X$ can be recovered with only $\mathcal O(n \polylog(n))$ queries using
randomized algorithms. The computational complexity of the proposed algorithms is also quasi-linear. 
This is a significant decrease in complexity compared to the quadratic complexity of any recovery algorithm that uses the entire (estimated)
covariance matrix as a starting point.

Of course a so stated problem cannot be solved in full generality and the algorithms need to rely on the sparsity of $K$ induced by bounds on  related parameters of the underlying graph such as maximum degree and treewidth (to be defined below). We propose randomized procedures that recover the correct graph and have low query and computational complexity with high probability. Our main results are briefly presented in Section~\ref{sec:prelim}, after introducing some necessary definitions. The rest of the paper is devoted to a careful analysis of three main cases: trees, tree-like graphs, and graphs with small treewidth. Our main result is an algorithm for each of the three cases, which recovers the correct graph with query and computational complexity $\mathcal O(n\polylog(n))$. 

In our analysis we first assume that the true underlying graph is a tree. In this case the Chow-Liu algorithm (\cite{chow1968approximating}) is a widely used computationally efficient algorithm to search for the tree that maximizes the likelihood function. The method was originally proposed for categorical variables but it works in a much more general context with the Gaussian likelihood or any other modular criterion such as BIC, AIC as discussed by \cite{edwards2010selecting}. In our setting the Chow-Liu algorithm is equivalent to computing a maximum-weight spanning tree in the complete graph with edge weights given by the absolute values of the correlations between any two variables. Although the Chow-Liu algorithm is relatively efficient and it has good statistical properties, the computational cost is of order $\Omega(n^2)$, which may be prohibitive in large-scale applications. We introduce a simple randomized algorithm that recovers the tree with computational complexity $\mathcal O(d n\log(n))$, where $d$ is the maximum degree of the graph. For a large class of trees, when the maximum degree is small, the algorithm introduced here significantly outperforms the Chow-Liu algorithm.
At the same time, the maximum degree of a tree can be as large as $n-1$ (for the so-called star graph).
For trees with linear maximum degree, our algorithm does not improve on the Chow-Liu algorithm.
However, in view of the lower bound shown in Section~\ref{sec:101}, no algorithm can recover trees of maximum degree $d$ with less than $\Omega(dn)$ covariance queries. In this sense, our algorithm is optimal up to logarithmic factors.

More generally, in Section~\ref{sec:treelike} we study the problem of learning tree-like structures, that is, graphs whose 2-connected components have at most logarithmic size. Such graphs arise in a variety of settings; in particular, several well-studied random graph models often give rise to graphs whose 2-connected components are of logarithmic size, see~\cite{panagiotou2010maximal}. In Sections~\ref{sec:boundedTWsep} and \ref{sec:boundedTW} we study the much more general family of graphs with bounded treewidth. Bounded treewidth graphs have long been of interest in machine learning due to the low computational cost of inference in such models~\cite{chandrasekaran2012complexity,karger2001learning,kwisthout2010necessity}. Moreover, current heuristics of treewidth estimation in real-world data have indicated small treewidth in various cases of interest \cite{abu2016metric,adcock2013tree,maniu2019experimental}.

The main motivation of this work was learning Gaussian graphical models but our results hold in a much broader context. Our goal is to learn zeros in the inverse covariance matrix when covariances may be queried.
Hence, this work may be phrased as a contribution to numerical linear algebra. In the statistical setting, learning zeros in $K$ corresponds to learning the partial correlation graph. Vanishing partial correlations correspond to conditional independence (and so graphical models) in the Gaussian case but also in the non-paranormal case of \cite{liu2009nonparanormal,liu2012high}. We provide more details in Section~\ref{sec:GMPCGs}. In general, partial correlation graphs inform only about linear dependences but there are still interesting situations when much more is implied by vanishing partial correlations \cite{rossell2020dependence}.

Our work also provides a new way of performing constraint-based inference in Gaussian graphical models and partial correlation graphs. This approach was pioneered by the TETRAD program \cite{scheines1998tetrad} where vanishing tetrad constraints are used to infer the structure of hidden variable graphical models; see also \cite{MR1815675} and \cite{MR2548166}. For Gaussian graphical models, the most complete results listing the underlying model constraints were provided by \cite{sullivant2010trek} and their results are used extensively in this article. Our new paradigm of adaptively searching over the set of potential constraints may also be useful for discrete graphical models, where the underlying graphical model can be read off the partial correlation graph of appropriately extended random vector as explained by \cite{lohright}.

{The case when $\Sigma$ is observed with error leads to additional complications. Solving this problem in full generality is beyond the scope of this paper. In order to present the main ideas and some bottlenecks, in Section~\ref{sec:finite} we study the problem of recovering tree models when only a noisy covariance oracle is available.

  We finish this introduction by mentioning that, in a different context, similar algorithms to some of those introduced in this paper were proposed and analyzed for bounded-degree trees by~\cite{10.1007/978-3-642-40935-6_14} and for Bayesian networks in~\cite{NEURIPS2018_a0b45d1b}}.

\section{Preliminaries and overview of the results}\label{sec:prelim}

In this section we first present some definitions. Then we briefly overview our main results.

\subsection{Graph-theoretic definitions}

A \emph{graph} $G=(V,E)$ is a pair of finite sets $V=V(G)$ and $E=E(G)$ called \emph{vertices} and \emph{edges}, where $E$ is a set of subsets of $V$ of size two. We typically write $uv$ instead of $\{u,v\}$ to denote an edge and our graphs are simple, that is, $u\neq v$. A \emph{subgraph} of $G$ is a graph $G'=(V',E')$ such that $V'\subseteq V$ and $E'\subseteq E$. For $V'\subseteq V$, denote by $G[V']$ the graph $(V', \{uv\in E| u,v\in V'\})$, called the {\it induced subgraph} of $G$ on $V'$. If $S\subset V$ we write $G\setminus S$ to denote $G[V\setminus S]$. A path between $u$ and $v$ is a sequence of edges $v_0v_1$, $v_1v_2$,\ldots,$v_{k-1}v_k$ with $v_0=u$ and $v_k=v$. We allow for \emph{empty paths} that consist of a single vertex. Two vertices $u,v\in V$ are \emph{connected} if there is  a path between $u$ and $v$.  For $v\in V$, the set $N(v)=\{u\in V|uv\in E\}$ is the \emph{neighborhood} of $v$, $\deg(v):=|N(v)|$ is its \emph{degree}, $\Delta (G):=\max _{v\in V}\deg(v)$ denotes the \emph{maximum degree} of $G$, and $|V|$  is the \emph{size} of $G$.

A graph on $n\ge 3$ vertices is a \emph{cycle} if there is an ordering of its vertices $v_1,\dots,v_n$, such that $E=\{v_1v_{2},\ldots,v_{n-1}v_n,v_nv_1\}$. A graph is \emph{connected} if all $u,v\in V$ are \emph{connected}. A \emph{tree} is a connected graph with no cycles.

A \emph{connected component} of $G$ is a maximal, with respect to inclusion, connected subgraph of $G$. A set $S\subseteq V$ \emph{separates} $A,B\subseteq V$ \emph{in $G$} if any path from $A$ to $B$ contains a vertex in $S$. Then $S$ is called a \emph{separator of $A$ and $B$} in $G$. { When $S$ is of minimum size, it will be called a \emph{minimal separator.} Note that we allow $A$ and $B$ to intersect, in which case $A\cap B$ needs to be contained in every separator of $A$ and $B$. Denote by $\cC^S$ the set of connected components of the graph $G\setminus S$. If $S$ separates two disjoint sets $A$ and $B$, then for every $u\in A\setminus S$ and $v\in B\setminus S$, $u,v$ lie in two different connected components of $G\setminus S$.

\subsection{Partial correlation graphs and graphical models }\label{sec:GMPCGs}

Let $\Sigma=[\sigma_{ij}]$ be an $n\times n$ symmetric positive definite matrix and denote $K=\Sigma^{-1}$. For a given graph $G$ over vertex set $ [n]=\{1,\ldots,n\}$, 
denote by $\mathcal{M}(G)$ the set of  covariance matrices $\Sigma$ satisfying $K_{ij}=0$ 
for all $ij \notin E(G)$. If $\Sigma$ is a covariance matrix of a Gaussian random vector $X$ then the condition $\Sigma\in \mathcal M(G)$ can be equivalently formulated through a set of conditional independence statements because of the  equivalence (see~\cite{lauritzen1996graphical})\begin{equation}\label{gprop} K_{ij}=0\quad\Longleftrightarrow\quad X_i\CI X_j\mid X_{[n]\setminus \{i,j\}}.\end{equation}
For a given $\Sigma$, the \emph{concentration graph} $\mathcal G(\Sigma)=([n],E)$ is the graph with $E=\{ij|K_{ij}\neq 0\}$.

Given a vector $x\in \R^n$ and a subset $A\subset [n]$ denote by $x_A$ the subvector of $x$ with entries $x_i$ for $i\in A$. Similarly, for sets $A,B \subseteq [n]$ and a matrix $M\in \R^{n\times n}$, let $M_{A,B}$ denote the restriction of $M$ to rows in $A$ and columns in $B$. Write $M_A$ for $M_{A,A}$. If $\Sigma$ is the covariance of $X$ then $\Sigma_{A,B}={\rm cov}(X_A,X_B)$.  In this article we extensively use the following result of Seth Sullivant, Kelli Talaska and Jan Draisma, which translates zero restrictions on a positive definite matrix $K=\Sigma^{-1}$ in terms of minors of $\Sigma$.

\begin{theorem}\upshape{\cite[Theorem 2.15]{sullivant2010trek}}\label{th:STDrank}\itshape{
Let $G$ be a connected graph with vertex set $[n]$.
We have ${\rm rank}(\Sigma _{A,B})\leq r$ for all $\Sigma\in \mathcal{M} (G)$ if and only if there is a set $S \subseteq [n]$ with $|S |\leq  r$ such that $S$ separates $A$ and $B$ in $G$. Consequently,
${\rm rank}(\Sigma _{A,B})\leq \min\{ |S|:S \,\,\mathrm{separates}\,\, A \,\,\mathrm{and}\, \,B\}$.
Moreover, there exists a dense open subset $\Gamma$ of $\mathcal M(G)$ such that equality holds for all matrices in 
$\Gamma$. }
\end{theorem}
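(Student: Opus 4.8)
The plan is to prove the equivalence by establishing one universal inequality and one explicit example, and then upgrading the example to a dense open set by a genericity argument. Throughout, write $K=\Sigma^{-1}$.

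\emph{The ``if'' direction and the ``consequently'' clause.} Suppose $S$ separates $A$ and $B$ with $|S|\le r$. Let $C$ be the union of those connected components of $G\setminus S$ that meet $A\setminus S$, and put $D=[n]\setminus(C\cup S)$, so $[n]=C\sqcup S\sqcup D$. Since $S$ separates $A$ and $B$, no vertex of $B\setminus S$ lies in $C$; hence $A\subseteq C\cup S$ and $B\subseteq S\cup D$. Since $C$ is a union of whole components of $G\setminus S$, there is no $G$-edge between $C$ and $D$, so $K_{C,D}=0$ for every $\Sigma\in\mathcal M(G)$. Reading the $(C\cup S)\times D$ block of $\Sigma K=I$ (whose right side is $0$ because $(C\cup S)\cap D=\emptyset$), using $K_{C,D}=0$, and using that the principal submatrix $K_D$ is invertible, gives
\[ \Sigma_{C\cup S,\,D}=-\,\Sigma_{C\cup S,\,S}\,K_{S,D}\,K_D^{-1}. \]
Thus every column of $\Sigma_{C\cup S,D}$ lies in the column space of $\Sigma_{C\cup S,S}$, so $\Sigma_{C\cup S,\,S\cup D}$ and $\Sigma_{C\cup S,S}$ have the same column space, of dimension at most $|S|\le r$. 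As $\Sigma_{A,B}$ is a submatrix of $\Sigma_{C\cup S,\,S\cup D}$, we get $\mathrm{rank}(\Sigma_{A,B})\le r$ for all $\Sigma\in\mathcal M(G)$; applying this to a minimum separator proves the ``consequently'' statement.

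\emph{The ``only if'' direction, via an explicit matrix.} Let $k=\min\{|S|:S\text{ separates }A\text{ and }B\}$; by the previous step $\mathrm{rank}(\Sigma_{A,B})\le k$ on $\mathcal M(G)$. It suffices to exhibit a single $\Sigma^\star\in\mathcal M(G)$ with $\mathrm{rank}(\Sigma^\star_{A,B})\ge k$: then whenever $\mathrm{rank}(\Sigma_{A,B})\le r$ holds throughout $\mathcal M(G)$ we must have $r\ge k$, so the minimum separator witnesses the ``only if'' direction. By the vertex form of Menger's theorem for the (possibly overlapping) sets $A,B$ — where a trivial one-vertex path at each $v\in A\cap B$ is allowed and separators may meet $A\cup B$ — there exist $k$ pairwise vertex-disjoint paths $P_1,\dots,P_k$ in $G$, with $P_i$ from $a_i\in A$ to $b_i\in B$ and all $a_i$ (resp.\ all $b_i$) distinct. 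Let $H$ be the (vertex-disjoint) union of the $P_i$, a subgraph of $G$, and define $K^\star$ to be block diagonal: on each $P_i$ of positive length put the tridiagonal inverse of the AR(1) covariance with a fixed $\rho\in(0,1)$ along $P_i$; put $1$ on each vertex forming a trivial $P_i$; and put the identity on the vertices outside $H$. Then $K^\star\succ0$ and $K^\star_{ij}=0$ whenever $ij\notin E(H)\subseteq E(G)$, so $\Sigma^\star:=(K^\star)^{-1}\in\mathcal M(G)$; and since $K^\star$ is block diagonal along the $P_i$, so is $\Sigma^\star$. Hence the $k\times k$ submatrix of $\Sigma^\star_{A,B}$ with rows $\{a_1,\dots,a_k\}$ and columns $\{b_1,\dots,b_k\}$ is diagonal with nonzero $(i,i)$ entry ($\rho$ raised to the length of $P_i$, or $\Sigma^\star_{v,v}>0$ on a trivial path), so $\mathrm{rank}(\Sigma^\star_{A,B})\ge k$.

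\emph{From one example to a dense open set.} Parametrize $\mathcal M(G)$ by $K$ ranging over the open convex cone of positive definite matrices inside the vector space $L_G=\{K=K^\top:K_{ij}=0\text{ for }ij\notin E(G)\}$, via $\Sigma=K^{-1}$. By Cramer's rule the entries of $\Sigma_{A,B}$, and hence its minors, are rational functions of $K$ with denominator a power of $\det K$. Each $(k{+}1)\times(k{+}1)$ minor of $\Sigma_{A,B}$ vanishes on all of $\mathcal M(G)$ by the first step, so its numerator (a polynomial in $K$) vanishes on the positive definite cone in $L_G$, which has nonempty interior in $L_G$ and is therefore Zariski dense; hence that numerator vanishes identically and $\mathrm{rank}(\Sigma_{A,B})\le k$ wherever $\Sigma$ is defined. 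The set $Z=\{K\in L_G:\text{all }k\times k\text{ minors of }\Sigma_{A,B}\text{ vanish}\}$ is the common zero set of finitely many polynomials in $K$, hence Zariski closed in $L_G\cong\mathbb R^m$, and proper because $K^\star\notin Z$; so $Z$ is nowhere dense, and $\Gamma:=\mathcal M(G)\setminus Z$ is open and dense in $\mathcal M(G)$. On $\Gamma$ some $k\times k$ minor is nonzero, so $\mathrm{rank}(\Sigma_{A,B})\ge k$, while $\le k$ always; hence equality holds on $\Gamma$. The main obstacle is invoking the correct version of Menger's theorem — for two possibly overlapping vertex \emph{sets}, with separators allowed to contain vertices of $A\cup B$ — and checking that trivial one-vertex paths at vertices of $A\cap B$ cause no trouble in the construction of $K^\star$; the block-matrix computation and the ``polynomial after clearing denominators, then intersect with the positive definite cone'' bookkeeping are routine by comparison.
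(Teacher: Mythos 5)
The paper itself gives no proof of this statement; it is cited from Sullivant, Talaska and Draisma, where it is established in the more general directed/mixed ``trek separation'' framework via the trek rule and a Lindstr\"om--Gessel--Viennot argument. Your argument is a correct, self-contained, and more elementary proof for the undirected case, so it is a genuinely different route. The upper bound via the decomposition $[n]=C\cup S\cup D$, the vanishing $K_{C,D}=0$, and the identity $\Sigma_{C\cup S,D}=-\Sigma_{C\cup S,S}K_{S,D}K_D^{-1}$ is a clean Schur-complement-style column-space argument. For tightness you invoke the right version of Menger's theorem --- the one for two (possibly overlapping) vertex sets, where separators may meet $A\cup B$ and each $v\in A\cap B$ gives a trivial $A$--$B$ path --- and the block-diagonal AR(1) witness $K^\star$ makes the $k\times k$ submatrix of $\Sigma^\star_{A,B}$ on rows $\{a_1,\dots,a_k\}$ and columns $\{b_1,\dots,b_k\}$ diagonal and nonsingular (the $a_i$ and the $b_i$ are distinct because the paths are vertex-disjoint), which is exactly what is needed. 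Genericity then follows, as you say, because after clearing $\det K$ denominators the bad locus $Z$ is a proper real-algebraic subset of the linear space $L_G$, hence nowhere dense in the positive definite cone. Two minor remarks: the sentence extending the vanishing of $(k{+}1)$-minors ``wherever $\Sigma$ is defined'' is unneeded, since your Step 1 already bounds the rank on all of $\mathcal M(G)$; and because the paper's Assumption~2 applies the generic conclusion to \emph{all} pairs $A,B$ simultaneously, one should at the end intersect the finitely many dense open sets $\Gamma_{A,B}$, one per pair --- a trivial but worth-stating addendum.
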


\noindent We call $\Gamma$ a \emph{generic} set. By a slight abuse of terminology, we call covariance matrices $\Sigma$ in $\Gamma$, as well as the corresponding random vectors $X$, generic. 

In this paper we assume that $\mathcal G(\Sigma)$ is connected or, equivalently by Theorem~\ref{th:STDrank}, that $\Sigma$ has no zero entries. 
Without this assumption the problem quickly becomes impossible to solve. For example, whether $\mathcal G(\Sigma)$ has zero or
one edge can only be decided after seeing the entire covariance matrix.
\begin{assumption}
The graph $\mathcal G(\Sigma)$ is connected.	
\end{assumption}

The following well-known characterisation of graphical models over trees will be useful. This result is well known for Gaussian tree models (see, for example, \cite{ltm}) but here we prove it formally to emphasize that this is a purely algebraic result.
\begin{lemma}\label{lem:prodrho}
If $\Sigma\in \mathcal M(T)$ for a tree $T$, 
then for every $i,j\in V$, the normalized entries $\rho_{ij}=\sigma_{ij}/\sqrt{\sigma_{ii}\sigma_{jj}}$ for $i,j\in V$ satisfy the product formula
\begin{equation}\label{eq:rhoedge}
	\rho_{ij}\;=\;\prod_{uv\in \overline{ij}}\rho_{uv},	
\end{equation}
	where $\overline{ij}$ denotes the \emph{unique} path between $i$ and $j$ in $T$. Also, if the normalized entries $\rho_{ij}$ in  $\Sigma$ satisfy (\ref{eq:rhoedge}) for some tree $T$ then $\Sigma\in\mathcal M(T)$.	
\end{lemma}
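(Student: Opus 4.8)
The plan is to prove the two implications separately, each by a short induction.

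\textbf{Product formula (forward direction).} The idea is to first establish the one-step identity $\rho_{ij}=\rho_{ik}\rho_{kj}$ whenever $k$ lies on the path $\overline{ij}$, and then induct on the length of $\overline{ij}$. For the one-step identity, observe that in a tree the singleton $\{k\}$ separates $A=\{i,k\}$ from $B=\{j,k\}$: every walk from $i$ to $j$ must traverse $k$ (in a tree there is only one $i$--$j$ path), and every other walk from $A$ to $B$ begins or ends at $k$; note also $A\cap B=\{k\}\subseteq\{k\}$, consistent with the convention that $A\cap B$ lies in every separator. Since $T$ is connected, Theorem~\ref{th:STDrank} gives ${\rm rank}(\Sigma_{A,B})\le 1$ for every $\Sigma\in\mathcal M(T)$, hence for our $\Sigma$; writing the $2\times 2$ block $\Sigma_{\{i,k\},\{j,k\}}$ explicitly and setting its determinant to zero yields $\sigma_{ij}\sigma_{kk}=\sigma_{ik}\sigma_{kj}$. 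Dividing by $\sqrt{\sigma_{ii}\sigma_{jj}}$ (the diagonal entries are positive because $\Sigma$ is positive definite) this is exactly $\rho_{ij}=\rho_{ik}\rho_{kj}$. Now if $\overline{ij}$ has length $\ge 2$, let $k$ be the neighbour of $i$ on $\overline{ij}$; then $\rho_{ij}=\rho_{ik}\rho_{kj}$ and $\overline{ij}=\{ik\}\cup\overline{kj}$ with $\overline{kj}$ shorter, so the induction hypothesis applied to $\overline{kj}$ gives $\rho_{ij}=\rho_{ik}\prod_{uv\in\overline{kj}}\rho_{uv}=\prod_{uv\in\overline{ij}}\rho_{uv}$. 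The base cases $i=j$ and $ij\in E(T)$ (empty and one-edge products) are immediate.

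\textbf{Converse direction.} I plan to induct on $n=|V|$. The cases $n\le 2$ are vacuous, since a tree on at most two vertices has no non-adjacent pair. For $n\ge 3$ pick a leaf $\ell$ with unique neighbour $p$, and set $T'=T\setminus\ell$ (a tree on $V'=V\setminus\{\ell\}$) and $\Sigma'=\Sigma_{V'}$. A path in $T$ between two vertices of $V'$ never visits the leaf $\ell$, so $\Sigma'$ still satisfies the product formula with respect to $T'$, and the induction hypothesis gives that $(\Sigma')^{-1}$ vanishes off the edges of $T'$. The product formula also constrains the $\ell$-th row: since $\overline{\ell j}=\{\ell p\}\cup\overline{pj}$ for $j\in V'\setminus\{p\}$ (and directly for $j=p$), one computes $\sigma_{\ell j}=c\,\sigma_{pj}$ with $c=\rho_{\ell p}\sqrt{\sigma_{\ell\ell}/\sigma_{pp}}$, i.e.\ the $\ell$-th column restricted to $V'$ equals $c\,\Sigma' e_p$, where $e_p$ is the $p$-th standard basis vector in $\R^{V'}$. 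Substituting this into the block-inverse (Schur complement) formula for $K=\Sigma^{-1}$, with $s=\sigma_{\ell\ell}-c^2\sigma_{pp}>0$, and using $(\Sigma')^{-1}(c\,\Sigma' e_p)=c\,e_p$, one finds $K_{\ell\ell}=1/s$, $K_{\ell p}=-c/s$, $K_{\ell j}=0$ for all $j\in V'\setminus\{p\}$, and $K_{V'}=(\Sigma')^{-1}+\frac{c^2}{s}e_p e_p^\top$. The last identity shows $K_{V'}$ agrees with $(\Sigma')^{-1}$ off the diagonal, hence has the zero pattern of $T'$; combined with the vanishing of $K_{\ell j}$ and $E(T)=E(T')\cup\{\ell p\}$, this gives $K_{ij}=0$ for every non-edge $ij$ of $T$, i.e.\ $\Sigma\in\mathcal M(T)$.

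I expect the main obstacle to lie in the converse: checking that $\Sigma'$ genuinely inherits the product formula for $T'$, reading off the precise rank-one form of the leaf's off-diagonal block from the product formula, and then tracking the Schur-complement identities carefully enough to see that $K_{V'}$ differs from $(\Sigma')^{-1}$ only by a rank-one bump at the $(p,p)$ entry. The forward direction is comparatively routine once one recognizes that $\{k\}$ separates $\{i,k\}$ and $\{j,k\}$ and applies Theorem~\ref{th:STDrank}.
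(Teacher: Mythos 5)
Your forward-direction argument is essentially the paper's: both identify a single interior vertex $k$ on $\overline{ij}$, observe that $\{k\}$ separates $\{i,k\}$ and $\{j,k\}$, invoke Theorem~\ref{th:STDrank} to force $\det(\Sigma_{\{i,k\},\{j,k\}})=0$, and then induct on path length. (You split off the neighbour of $i$; the paper splits off the neighbour of $j$ — immaterial.)

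Your converse, however, takes a genuinely different route, and it is correct. The paper fixes a single separating vertex $k$, takes an arbitrary partition $A/B$ of $V\setminus\{k\}$ separated by $k$, derives $\Sigma_{A,B}=\Sigma_{A,k}\Sigma_{k,B}$ from the product formula, and then verifies by hand that a certain $3\times3$ block matrix with zeros in the $(A,B)$ and $(B,A)$ blocks multiplies $\Sigma$ to the identity — a direct, one-shot block-inversion. You instead induct on $|V|$ by peeling off a leaf $\ell$ with neighbour $p$: the product formula forces the off-diagonal leaf column to be a scalar multiple of the $p$-th column of $\Sigma'=\Sigma_{V'}$, namely $\Sigma_{V',\ell}=c\,\Sigma'e_p$ with $c=\rho_{\ell p}\sqrt{\sigma_{\ell\ell}/\sigma_{pp}}$, and then the Schur complement formula collapses to give $K_{\ell j}=0$ for $j\neq p$ and $K_{V'}=(\Sigma')^{-1}+\frac{c^2}{s}e_pe_p^\top$, a rank-one bump at $(p,p)$ only. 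The inductive hypothesis on $T'$ then finishes it. The paper's approach is more self-contained (a single verification, no induction on $|V|$) and simultaneously establishes the stronger block statement $K_{A,B}=0$; your approach is arguably more transparent about \emph{why} removing a leaf preserves the zero pattern, and it recovers the explicit update to $K$ under leaf-deletion as a by-product. Both proofs go through; the algebra in your Schur-complement step checks out (including $s>0$ from positive definiteness, and the identity $\sigma_{\ell p}=c\,\sigma_{pp}$ needed for the $j=p$ case).

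Two very small points you might make explicit when writing this up: (i) in the forward direction you should note that the rank bound from Theorem~\ref{th:STDrank} is $\le 1$ and therefore the determinant vanishes, even though the rank is in fact exactly $1$ (as $\sigma_{kk}>0$); and (ii) the base cases $n\le 2$ of your converse induction are vacuous because every pair of vertices is adjacent, as you say — worth stating that $\mathcal M(T)$ is then the whole positive-definite cone.
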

\begin{proof}
	The first implication can be easily proved by induction on the number of edges in the path $\overline{ij}$ in $T$. If $\overline{ij}$ contains a single edge then $\rho_{ij}$ trivially satisfies (\ref{eq:rhoedge}). Suppose $\overline{ij}=i-i_1-\cdots-i_k-j$. Since $\Sigma\in \mathcal M(T)$ and $i_k$ separates $\{i,i_k\}$ and $\{i_k,j\}$, Theorem~\ref{th:STDrank} assures that ${\rm rank}(\Sigma_{i i_k,i_k j})\leq 1$, or in other words, $\sigma_{ij}\sigma_{i_k i_k}=\sigma_{i i_k}\sigma_{i_k j}$. Equivalently, $\rho_{ij}=\rho_{i i_k}\rho_{i_k j}$. By the induction hypothesis, $\rho_{i i_k}$ satisfies the path-product formula over the path $\overline{i i_k}$ and so
	$$
	\rho_{ij}\;=\;\rho_{i i_k}\rho_{i_k j}\;=\;\left(\prod_{uv\in \overline{i i_k}}\rho_{uv}\right)\rho_{i_k j}\;=\;\prod_{uv\in \overline{i j}}\rho_{uv}
	$$	
	proving that (\ref{eq:rhoedge}) holds.
	
	For the other implication, suppose that the normalized entries of $\Sigma$ satisfy (\ref{eq:rhoedge}) for some tree $T$. We want to show that $\Sigma\in \mathcal M(T)$, equivalently, $K_{ij}=0$ for all $ij\notin T$. Without loss of generality, suppose $\Sigma$ is already standardized so that $\sigma_{ii}=1$ for $i\in V$. ($\mathcal M(T)$ is invariant under $\Sigma\mapsto D\Sigma D$ with $D$ diagonal). Let $k$ be any vertex that separates $i$ and $j$ in $T$. Let now $A/B$ be any partition of $V\setminus \{k\}$ into two subsets  such that $i\in A$, $j\in B$, and $k$ separates $A$ and $B$. By (\ref{eq:rhoedge}), $\Sigma_{A,B}=\Sigma_{A,k}\Sigma_{k,B}$. We will show that $K_{A,B}=0$ and so in particular $K_{ij}=0$. Direct computations show that the matrix equation
\begin{equation}\label{eq:invsigtree}
		\begin{bmatrix}
		\Sigma_{A,A} & \Sigma_{A,k} & \Sigma_{A,k}\Sigma_{k,B}\\
		\Sigma_{k,A} & 1 & \Sigma_{k,B}\\
		\Sigma_{B,k}\Sigma_{k,A} & \Sigma_{B,k} & \Sigma_{B,B}
	\end{bmatrix}\cdot \begin{bmatrix}
		\mathbf A & \mathbf b  & \mathbf 0\\
		\mathbf b^T & c & \mathbf d^T \\
		\mathbf 0 & \mathbf d & \mathbf E
	\end{bmatrix}\;=\;I_n
\end{equation}
	has a solution with 
	$$
	\mathbf A\;=\;(\Sigma_{A,A}-\Sigma_{A,k}\Sigma_{k,A})^{-1},\qquad \mathbf b\;=\;-(\Sigma_{A,A}-\Sigma_{A,k}\Sigma_{k,A})^{-1}\Sigma_{A,k},
	$$
		$$
	\mathbf E\;=\;(\Sigma_{B,B}-\Sigma_{B,k}\Sigma_{k,B})^{-1},\qquad \mathbf d\;=\;-(\Sigma_{B,B}-\Sigma_{B,k}\Sigma_{k,B})^{-1}\Sigma_{B,k},
	$$
	$$
	c\;=\;1+\Sigma_{k,A}(\Sigma_{A,A}-\Sigma_{A,k}\Sigma_{k,A})^{-1}\Sigma_{A,k}+\Sigma_{k,B}(\Sigma_{B,B}-\Sigma_{B,k}\Sigma_{k,B})^{-1}\Sigma_{B,k}.
	$$
	Since the first term in the product in (\ref{eq:invsigtree}) is the matrix $\Sigma$, the second matrix must be $K$. We conclude that $K_{A,B}=0$.
\end{proof}

In this article we assume that the genericity condition of Theorem~\ref{th:STDrank} holds.

\begin{assumption}\label{mntool}
The matrix $\Sigma\in \mathcal M(G)$ is always to be generic, or equivalently, for every $A,B\subseteq V$, ${\rm rank}(\Sigma_{A,B})=\min\{|S|:\;S\mbox{ separates }A\mbox{ and }B\}$.
\end{assumption}

 This assumption gives us the following important result that translates small sets of covariance queries into information about the underlying concentration graph $\mathcal G(\Sigma)$.
\begin{lemma}\label{mntool3}
Under Assumption~\ref{mntool}, ${\rm rank}(\Sigma_{AC,BC})={\rm rank}(\Sigma_{A,B})$ if and only if $C$ is a subset of a minimal separator of $A$ and $B$ in $\mathcal G(\Sigma)$.
\end{lemma}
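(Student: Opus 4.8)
The plan is to apply Theorem~\ref{th:STDrank} (in the generic form guaranteed by Assumption~\ref{mntool}) to both pairs of index sets and compare the two minimal separator sizes. Write $s = \min\{|S| : S \text{ separates } A \text{ and } B \text{ in } \mathcal G(\Sigma)\}$ and $s' = \min\{|S'| : S' \text{ separates } AC \text{ and } BC \text{ in } \mathcal G(\Sigma)\}$; by Assumption~\ref{mntool} we have ${\rm rank}(\Sigma_{A,B}) = s$ and ${\rm rank}(\Sigma_{AC,BC}) = s'$, so the claimed equivalence becomes: $s' = s$ if and only if $C$ is contained in some separator of $A$ and $B$ of minimum size $s$. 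One easy direction is monotonicity: any set that separates $AC$ and $BC$ also separates the smaller pair $A$ and $B$, hence $s \le s'$; so the content is to show $s' \le s$ precisely when $C$ lies inside a minimum separator of $A,B$.

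The first key step is the ``if'' direction. Suppose $C \subseteq S^*$ where $S^*$ separates $A$ and $B$ with $|S^*| = s$. I claim $S^*$ also separates $AC$ and $BC$. Indeed $C \subseteq S^*$, and recall the convention established in the preliminaries that when the two sets being separated intersect (here $AC$ and $BC$ share $C$), the intersection must be contained in every separator — which is automatic here since $C \subseteq S^*$. Any path from $AC$ to $BC$ either starts or ends in $C \subseteq S^*$ (done immediately), or runs from $A$ to $B$, in which case it meets $S^*$ because $S^*$ separates $A$ and $B$. Hence $S^*$ separates $AC$ and $BC$, giving $s' \le |S^*| = s$, and combined with $s \le s'$ we get equality.

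The second key step is the ``only if'' direction, which I expect to be the main obstacle. Assume $s' = s$ and let $S'$ be a minimum separator of $AC$ and $BC$, so $|S'| = s$. By the intersection convention, $C \subseteq S'$. Now I want to produce a minimum separator of $A$ and $B$ that actually contains $C$ — but $S'$ itself already separates $A$ and $B$ (monotonicity again, since $A \subseteq AC$, $B \subseteq BC$) and has size $s$, which equals the minimum separator size for $A,B$. Therefore $S'$ is a minimum separator of $A$ and $B$ with $C \subseteq S'$, and we are done. The subtlety to nail down carefully is exactly the handling of the $A \cap B$ / $C$-overlap bookkeeping in the definition of separator when the separated sets are not disjoint — in particular checking that a ``minimal separator'' in the statement means a separator of minimum cardinality (as defined in the preliminaries) and that Theorem~\ref{th:STDrank}'s rank formula is being read with that same convention; once the definitions are lined up, the argument above is essentially a two-line monotonicity-plus-counting argument.

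One should also double-check the degenerate cases: if $A,B,C$ are such that no finite separator exists this cannot happen since $V$ itself separates everything, and if $s = 0$ (so $A$ and $B$ already lie in different connected components with empty intersection) then the statement reads ${\rm rank}(\Sigma_{AC,BC}) = 0$ iff $C = \emptyset$, which is consistent with the convention that the empty set is (vacuously) a minimal separator of size $0$ and its only subset is itself.
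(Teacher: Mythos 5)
Your proof is correct and takes essentially the same route as the paper's: translate both ranks into minimum separator sizes via Assumption~\ref{mntool}, use the convention that separators of intersecting sets must contain the intersection (so any separator of $AC,BC$ contains $C$), and note that any separator of $AC,BC$ automatically separates $A,B$. The paper's own proof writes out only the ``only if'' direction (take a minimum separator of $AC,BC$; it separates $A,B$, has the right size, and contains $C$), whereas you also spell out the easy ``if'' direction and the monotonicity inequality $s\le s'$; that extra care is harmless and arguably an improvement in completeness.
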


\noindent Here and throughout we use the convention of writing $A\cup B$ as $AB$ in subindices.

\medskip

\begin{proof}By Assumption~\ref{mntool}, ${\rm rank}(\Sigma_{AC,BC})$ is the size of a minimal separator of $A\cup C$ and $B\cup C$, and ${\rm rank}(\Sigma_{A,B})$ is the size of a minimal separator of $A$ and $B$. Since ${\rm rank}(\Sigma_{AC,BC})={\rm rank}(\Sigma_{A,B})$, there is a minimal separator of $A\cup C$ and $B\cup C$ that is also a minimal separator for $A$ and $B$. By construction, this separator contains $C$. \end{proof}
\begin{remark}
As mentioned in the introduction, vanishing partial correlations do not necessarily translate to conditional independence statements. In the tree case, conditional independence is implied not only for Gaussian and non-paranormal data but also for binary variables, or more generally, in  situations where the dependence of adjacent variables in the tree is linear; see \cite{ltm} for more details. 
\end{remark}

\subsection{Overview of the main results}\label{sec:overview}

Formulating simplified versions of our main results, we use the notation $\mathcal O_{\alpha}$ to denote that the complexity order contains a factor depending on parameters $\alpha$. Our first result studies computationally efficient ways to learn a tree.
\hypersetup{linkcolor=black}
\begin{customthm}{\ref{mmain}}[Simplified version]
	Suppose $\mathcal G(\Sigma)=T=([n],E)$ is a tree with $n$ vertices and maximum degree $\Delta(T)\leq d$.  
Then there is an algorithm that outputs the correct tree and, with probability at least $1-\epsilon$, works in time and query complexity $\mathcal O_{\epsilon ,d}(n\log ^2n)$.  
\end{customthm}
\hypersetup{linkcolor=red}
\noindent In Theorems~\ref{thm:lowerstar} and \ref{thm:lowerdary} we show that these bounds are essentially optimal and the dependence on the maximum degree is essential. 

Our second result is for graphs with small 2-connected components and small degree of the  block-cut tree; see Section~\ref{sec:treelike} for formal definitions.
\hypersetup{linkcolor=black}
\begin{customthm}{\ref{mmain2}}[Simplified version]
	Let $G=([n],E)$ be a graph whose largest 2-connected component has size at most $b$ and whose maximum degree of the block-cut tree is at most $d$. If $\Sigma\in \mathcal M(G)$ is generic, then there is an algorithm that outputs the correct graph and, with probability at least $1-\epsilon$, works in time and query complexity $\mathcal O_{\epsilon ,d,b}(n\log ^2n)$. \end{customthm}
	\hypersetup{linkcolor=red}

Our main result is presented in Sections~\ref{sec:boundedTWsep} and \ref{sec:boundedTW}. We propose a randomized algorithm that is able to recover efficiently the concentration graph $\mathcal G(\Sigma)$ 
as long as this graph has bounded  \emph{treewidth} and maximum degree. (In fact, the algorithm remains efficient when both parameters grow slowly with $n$.) Graphs with bounded treewidth form an important class of sparse graphs that have played a central
role in graph algorithms. The class of graphs with small treewidth includes series-parallel graphs, 
outerplanar graphs, Halin graphs, Apollonian networks, and many others, see \cite{bodlaender1998partial} for a general reference.
Treewidth has also been known to be an essential parameter in inference and structure recovery for graphical models \cite{chandrasekaran2012complexity,kwisthout2010necessity,wainwright2008graphical}. 
\hypersetup{linkcolor=black}
\begin{customthm}{\ref{th:mainmain}}[Simplified version]
	Let $G=([n],E)$ be a graph with treewidth at most $k$ and 
maximum degree at most $d$.
 If $\Sigma\in \mathcal M(G)$ is generic, then there is an algorithm that outputs the correct concentration graph and, with probability at least $1-\epsilon$, works in time and query complexity $\mathcal O_{\epsilon ,k,d}(n\log ^5 n)$. \end{customthm}
\hypersetup{linkcolor=red}

Actually, the algorithm we propose not only reconstructs the concentration graph $\mathcal G(\Sigma)$ but also it computes the precision matrix $K$. Since there are at most $kn$ edges in a graph with treewidth $k$, there is no contradiction with the stated computational 
complexity.

\section{Recovery of tree-like structures}\label{sec:treelike}

In this section we discuss in detail procedures for learning trees and graphs with small 2-connected components. A graph is \emph{2-connected} if for any vertex $v$, $G\setminus v$ is connected. 
If $V'\subseteq V$ is maximal, with respect to inclusion, such that $G[V']$ is 2-connected, then  $G[V']$ is a \emph{2-connected component} or \emph{block} of $G$.

{Theorem \ref{mmain2} below clarifies what we mean by ``small 2-connected components.''
  In particular, if the size $b$ of the largest 2-connected component is at most of the order $\log^{1/3}n$,
  then both the query and computational complexity of the proposed recovery algorithm are of the order $n\log^2 n$.
}  
 
 For a given graph $G$, let $\cB$ be the set of 2-connected components of $G$ and let $A$ be the set of {\it cut-vertices}, that is, vertices that belong to more than one 2-connected components. The \textit{block-cut tree} $\bc(G)$ of $G$ is a bipartite graph on $A\cup \cB$ where an edge between $a\in A$ and $B\in \cB$ exists if $a\in B$.  A block-cut tree is a tree by \cite[Theorem 4.4]{harary6graph}. {See Figure~\ref{fig:100} for an example of a graph and its block-cut tree.}
 \begin{figure}
 	\includegraphics[scale=.8]{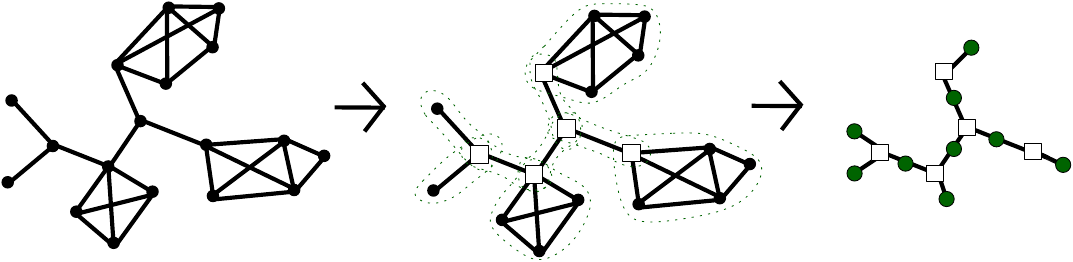}
 	\caption{{A graph, its cut-vertices (in squares) and its 2-connected components (circled by dotted curves), and the corresponding block-cut tree.}}\label{fig:100}
 \end{figure}

Both proposed learning procedures---one for recovering trees, another, more general one, for graphs with small 
$2$-connected components---are divide-and-conquer type algorithms. In both methods we first determine a cut
vertex that splits the graph into relatively small pieces, identify the pieces, and proceed recursively. 
Hence, the starting point of our analysis is to identify, at each step of the algorithm, a cut vertex (i.e., a separator 
of size one) that is balanced.

\subsection{Centrality and balanced separators}\label{sec:central}

Let $\cC^v$ be the set of connected components of $G\setminus v$ and define
\begin{equation}\label{eq:cv}
c(v)\;=\;\frac{1}{|V|-1}\max_{C\in \cC^v}|C|.	
\end{equation}
 Denote by $v^*$ a vertex that attains the minimum such value, that is, $v^*=\argmin_{v\in V} c(v)$. If $G$ is a tree,  $v^*$ is called a \emph{centroid}. It is a well-known fact (see \cite[Theorem 4.3]{harary6graph}, for instance) that a tree can have at most two centroids and $c(v^*)\leq \frac{1}{2}\frac{|V|}{|V|-1}$. 

In the first phase we efficiently find vertices with $c(v)\le \alpha$ for a fixed $\alpha<1$.  To that end, we introduce a measure of vertex centrality, called \emph{$s$-centrality} $s(v)$, that can be used as a surrogate for $c(v)$ and whose minimizer can be  approximated efficiently. For each $v\in V$, s-centrality is defined as
\begin{equation}s(v)\;=\;\frac{1}{(|V|-1)^2}\sum _{C\in \cC^v}|C|^2.\label{s-central}\end{equation}
We denote $v^\circ=\argmin_{v\in V} s(v)$.

\begin{lemma}\label{property}For every vertex $s(v) \leq c(v)\leq \sqrt{s(v)}$. Moreover, $s(v^\circ)\leq c(v^*)$. \end{lemma}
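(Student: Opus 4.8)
The plan is to prove the three claimed inequalities in turn, all of which follow from elementary facts about the component sizes. Fix a vertex $v$ and write $\cC^v = \{C_1,\dots,C_m\}$ for the connected components of $G\setminus v$, with sizes $n_i = |C_i|$, so that $\sum_{i=1}^m n_i = |V|-1 =: N$. Set $p_i = n_i/N$, so $(p_1,\dots,p_m)$ is a probability vector. Then $c(v) = \max_i p_i$ and $s(v) = \sum_i p_i^2$.

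\textbf{Proving $s(v)\le c(v)\le\sqrt{s(v)}$.} For the left inequality, bound $s(v) = \sum_i p_i^2 \le (\max_i p_i)\sum_i p_i = \max_i p_i = c(v)$. For the right inequality, observe $c(v)^2 = (\max_i p_i)^2 \le \sum_i p_i^2 = s(v)$, since the maximum of the $p_i^2$ is at most their sum (all terms nonnegative). Taking square roots gives $c(v)\le\sqrt{s(v)}$. This is just the standard nesting of $\ell_\infty$, $\ell_2$, $\ell_1$ norms on a probability vector.

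\textbf{Proving $s(v^\circ)\le c(v^*)$.} Since $v^\circ$ minimizes $s$ over all vertices, we have $s(v^\circ)\le s(v^*)$. Combining with the first part applied to $v^*$, namely $s(v^*)\le c(v^*)$, we get $s(v^\circ)\le s(v^*)\le c(v^*)$, as claimed. So this third inequality is an immediate consequence of the first one together with the defining minimality of $v^\circ$.

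I do not expect any real obstacle here: the statement is purely about comparing $\ell_1$-, $\ell_2$-, and $\ell_\infty$-type functionals of the vector of normalized component sizes, and every step is a one-line estimate. The only thing to be careful about is bookkeeping — making sure the normalization $\sum_i n_i = |V|-1$ is used consistently so that the $p_i$ genuinely form a probability vector, which is what makes $\sum_i p_i^2 \le \max_i p_i$ valid. No genericity assumption or any structural graph theory is needed for this lemma; it is a self-contained inequality that will later be used to justify that minimizing the more tractable surrogate $s$ yields a vertex that is also good with respect to $c$.
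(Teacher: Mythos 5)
Your proof is correct and follows essentially the same route as the paper: both introduce the normalized vector $p_i = |C_i|/(|V|-1)$, bound $\sum_i p_i^2 \le (\max_i p_i)\sum_i p_i$ for the first inequality, use $\|p\|_\infty \le \|p\|_2$ for the second, and chain $s(v^\circ)\le s(v^*)\le c(v^*)$ for the third. No substantive difference.
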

\begin{proof}
Let $v\in V$ and $\cC^v=\{C_1,\dots,C_m\}$. The first inequality follows from 
$$s(v)\;\leq\; \frac{1}{(|V|-1)^2}\sum _{j=1}^{m} |C_j| \max_{i} |C_i|\;=\;\frac{|V|-1}{(|V|-1)^2} \max_{i \in [m]} |C_i|\;=\;c(v).
$$
To show $c(v)\leq \sqrt{s(v)}$, consider the vector $p=(p_1,\ldots,p_m)$ with $p_i=\frac{|C_i|}{|V|-1}$. Then $c(v)=\|p\|_\infty$ and $s(v)=\|p\|_2^2$.
 The second inequality simply follows from the fact that $\|p\|_\infty\leq \|p\|_2$ for every $p\in \R^m$. To show the last inequality note that $s(v^\circ)\leq  s(v^*)$ by the optimality of $v^\circ$ and $s(v^*)\leq c(v^*)$ by the first proved inequality.
\end{proof}

\noindent The procedure ${\tt sCentral}$ outlined in Algorithm~\ref{sRecovery} finds, with high probability, a vertex $\hat{v}$ with $s(\hat{v})$ close to $s(v^\circ)$. For each vertex $v\in V$ the algorithm approximates $s(v)$ by randomly sampling a few pairs $u,w$ of vertices in $V\setminus \{v\}$ and checking if $v$ separates $u$ and $w$. By Lemma~\ref{lem:prodrho}, this can be accomplished by checking if $\Sigma_{uv}\Sigma_{vw}=\Sigma_{uw}\Sigma_{vv}$,
or equivalently, if $\det(\Sigma_{uv,vw})=0$. 

The algorithm outputs a vertex with smallest approximate value of $s(v)$.

\begin{algorithm}\label{sRecovery}
Parameter: $\kappa$\;  
 $\hat{s}(v):= 0$ for all $v\in V$\;
\For{all $v\in V$}{
\For{$i=1$  \KwTo $\kappa$}{ Pick $u, w$ uniformly at random in $V\setminus \{v\}$ \;  \If {$\det(\Sigma_{uv,vw})\neq 0$}{ $\hat{s}(v):=\hat{s}(v)+\frac{1}{\kappa}$\;}}}
Return $\arg\min _v \hat{s}(v) $\;
\label{s-algo}
\caption{${\tt sCentral}(V)$}
\end{algorithm}

\begin{proposition}\label{Recovery}Let $G=(V,E)$ be a graph. The time and query complexity of computing $\hat v={\tt sCentral}(V)$ are  $\mathcal{O}\left(|V|\kappa\right)$. Moreover, for any $\delta>0$
$$
\mathbb{P}\left( s(\hat{v} )\geq s(v^\circ)+2\delta \right) \;\;\leq\;\;  2|V|\exp \big( -2\delta^2\kappa\big).
$$
\end{proposition}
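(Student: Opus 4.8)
The plan is to bound the two sources of error separately: the complexity is immediate from the loop structure, and the probabilistic guarantee follows from a Hoeffding bound plus a union bound. First I would observe that the algorithm performs $|V|$ iterations of the outer loop, each consisting of $\kappa$ iterations of the inner loop, and each inner iteration makes a constant number of covariance queries (namely $\sigma_{uv}$, $\sigma_{vw}$, $\sigma_{uw}$, $\sigma_{vv}$ to evaluate $\det(\Sigma_{uv,vw})$) and does constant work; hence both the time and query complexity are $\mathcal{O}(|V|\kappa)$.

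For the probabilistic statement, I would first identify what $\hat s(v)$ estimates. Fix $v$. When we pick $u,w$ uniformly and independently at random in $V\setminus\{v\}$, the event that $v$ separates $u$ and $w$ in $G$ happens exactly when $u$ and $w$ lie in distinct connected components of $G\setminus v$ (or one of them equals... no, they are drawn from $V\setminus\{v\}$, so they lie in $\cC^v$). The probability of this is $1 - \sum_{C\in\cC^v}(|C|/(|V|-1))^2 = 1 - s(v)$, so the probability that they are \emph{not} separated, which by Lemma~\ref{lem:prodrho} is exactly the event $\det(\Sigma_{uv,vw})\neq 0$ incrementing $\hat s(v)$... wait — I need to be careful about the direction: $\det(\Sigma_{uv,vw})=0$ iff $\rho_{uw}=\rho_{uv}\rho_{vw}$, and by Lemma~\ref{lem:prodrho} together with Assumption~\ref{mntool} (genericity, so that rank equals minimal separator size), this determinant vanishes precisely when $v$ separates $u$ and $w$. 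Thus the indicator that the counter is incremented equals the indicator that $v$ does \emph{not} separate $u$ and $w$, which has probability $s(v)$. Therefore $\kappa\,\hat s(v)$ is a sum of $\kappa$ i.i.d.\ Bernoulli$(s(v))$ random variables, so $\E[\hat s(v)] = s(v)$, and by Hoeffding's inequality, for each fixed $v$ and any $\delta>0$,
\[
\mathbb{P}\big(|\hat s(v) - s(v)| \geq \delta\big) \;\leq\; 2\exp(-2\delta^2\kappa).
\]

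Now I would combine these. By a union bound over all $v\in V$, with probability at least $1 - 2|V|\exp(-2\delta^2\kappa)$ we have $|\hat s(v) - s(v)| < \delta$ simultaneously for every $v$. On this event, let $\hat v = \argmin_v \hat s(v)$. Then
\[
s(\hat v) \;<\; \hat s(\hat v) + \delta \;\leq\; \hat s(v^\circ) + \delta \;<\; s(v^\circ) + 2\delta,
\]
where the middle inequality uses the optimality of $\hat v$ for $\hat s$. Hence $\mathbb{P}(s(\hat v) \geq s(v^\circ) + 2\delta) \leq 2|V|\exp(-2\delta^2\kappa)$, as claimed. The only real subtlety — and the step I would state most carefully — is the identification of the increment event with the separation event via Lemma~\ref{lem:prodrho} and Assumption~\ref{mntool}; everything else is a routine Hoeffding-plus-union-bound argument. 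One should also note the independence of the $\kappa$ samples within a fixed $v$ (and that $u,w$ may coincide, which only contributes to the "not separated" case and is already accounted for in the $s(v)$ probability since a repeated vertex trivially lies in a single component).
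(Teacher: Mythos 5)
Your proof is correct and takes essentially the same route as the paper: treat $\kappa\hat s(v)$ as a Binomial with mean $\kappa s(v)$, apply Hoeffding plus a union bound over $v\in V$, and use the optimality of $\hat v$ for $\hat s$ to convert uniform concentration of $\hat s$ into the stated bound on $s(\hat v)$. One small nit (shared by the paper's informal discussion preceding Algorithm~\ref{sRecovery}): since Proposition~\ref{Recovery} is stated for a general graph, the equivalence between $\det(\Sigma_{uv,vw})=0$ and ``$v$ separates $u$ and $w$'' should really be attributed to Assumption~\ref{mntool} / Lemma~\ref{mntool3} rather than to the tree-specific Lemma~\ref{lem:prodrho}; you do invoke the genericity assumption, so this is just a citation issue, not a gap.
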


\begin{proof}
The time and query complexity are obtained in a straightforward way. For the second statement note that, for every $v\in V$, $\kappa\hat{s}(v)$ is a binomial random variable with mean $\kappa s(v)$. 
Hence, by Hoeffding's inequality and the union bound, we obtain
\begin{equation*}
\mathbb{P}[\max _v |\hat{s}(v)-s(v)|\geq \delta ]\;\leq\; 2|V|\exp \big( -2\delta^2\kappa\big) .
\end{equation*}
For $\hat{v}=\arg \min_{v} \hat{s}(v)$ after running Algorithm~\ref{s-algo} \begin{eqnarray*}
\mathbb{P}[s(\hat{v} )\geq s(v^\circ)+2\delta ] &\leq & \mathbb{P}[s(\hat{v}) -\hat{s}(\hat{v})+\hat{s}(v^\circ)-s(v^\circ)\geq 2\delta ] \\  &\leq & \mathbb{P}[\max _v\{ |\hat{s}(v)-s(v)|\}\geq \delta ]\leq 2|V|\exp \big( -2\delta^2\kappa\big),
\end{eqnarray*} 
where the second inequality follows from the fact that $(\hat s(v^\circ)-s(v^\circ))-(\hat s(\hat v)-s(\hat v))\leq 2\max _v|\hat s(v)-s(v)|$. 
\end{proof}
 
We now show that the above procedure finds a good splitting vertex with high probability. 
\begin{proposition}\label{prop:big}
	For any graph $G=(V,E)$, if  $s(v)< s(v^\circ)+2\delta$ then $$c(v)\;<\;\sqrt{s(v^\circ)+2\delta}\;\leq\;\sqrt{c(v^*)+2\delta}.$$ 
	In particular, if $G=(V,E)$ is a tree with $|V|\geq 4$, and $\delta< \frac{1}{6}$, then $c(v)<1$.\end{proposition}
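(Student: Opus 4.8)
The plan is to prove the two assertions of Proposition~\ref{prop:big} in turn, both of which are short consequences of Lemma~\ref{property} and the elementary combinatorics of tree centroids.

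\medskip

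\textbf{First inequality.} Suppose $s(v)<s(v^\circ)+2\delta$. By Lemma~\ref{property} we have $c(v)\le\sqrt{s(v)}$, and since the square root is increasing, $c(v)\le\sqrt{s(v)}<\sqrt{s(v^\circ)+2\delta}$. For the second bound in the chain, Lemma~\ref{property} also gives $s(v^\circ)\le c(v^*)$, hence $s(v^\circ)+2\delta\le c(v^*)+2\delta$ and again by monotonicity of the square root $\sqrt{s(v^\circ)+2\delta}\le\sqrt{c(v^*)+2\delta}$. Chaining these gives exactly the displayed inequality; this part is essentially immediate once Lemma~\ref{property} is in hand.

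\medskip

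\textbf{The tree case.} Now let $G=(V,E)$ be a tree with $|V|\ge 4$, and let $v^*$ be a centroid. The key external fact, cited in the paper just after \eqref{eq:cv}, is that for a tree $c(v^*)\le\frac12\cdot\frac{|V|}{|V|-1}$. Writing $N=|V|\ge 4$, this is $c(v^*)\le\frac{N}{2(N-1)}$. Combining with the first part applied to any $v$ with $s(v)<s(v^\circ)+2\delta$, we get
\[
c(v)\;<\;\sqrt{\,c(v^*)+2\delta\,}\;\le\;\sqrt{\frac{N}{2(N-1)}+2\delta\,}.
\]
It remains to check that the right-hand side is at most $1$, i.e.\ that $\frac{N}{2(N-1)}+2\delta\le 1$, i.e.\ $2\delta\le 1-\frac{N}{2(N-1)}=\frac{2(N-1)-N}{2(N-1)}=\frac{N-2}{2(N-1)}$. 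The function $N\mapsto\frac{N-2}{2(N-1)}$ is increasing in $N$, so for $N\ge 4$ its minimum is attained at $N=4$, giving $\frac{2}{6}=\frac13$. Hence $\frac{N-2}{2(N-1)}\ge\frac13$ for all $N\ge 4$, and the hypothesis $\delta<\frac16$ yields $2\delta<\frac13\le\frac{N-2}{2(N-1)}$, so indeed $c(v)<1$.

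\medskip

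\textbf{Main obstacle.} There is no serious obstacle: the argument is a monotonicity manipulation plus the worst-case check at $N=4$. The one point that deserves care is the edge case $|V|=4$, where the bound $c(v^*)\le\frac{N}{2(N-1)}=\frac23$ is nearly tight, so the slack against $1$ is exactly $\frac13$ and the strict inequality $\delta<\frac16$ (rather than $\le$) is what one needs to conclude $c(v)<1$ with a strict sign; for $N=3$ one has $c(v^*)=1$ and the statement would fail, which is why the hypothesis $|V|\ge4$ is imposed. I would present the proof in the order above: invoke Lemma~\ref{property} for the first display, then plug in the centroid bound and verify the numeric inequality $2\delta<\frac{N-2}{2(N-1)}$ for $N\ge 4$.
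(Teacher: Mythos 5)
Your proof is correct and follows essentially the same route as the paper: the first display is exactly the chain $c(v)\le\sqrt{s(v)}<\sqrt{s(v^\circ)+2\delta}\le\sqrt{c(v^*)+2\delta}$ extracted from Lemma~\ref{property}, and the tree case is the centroid bound $c(v^*)\le\frac{1}{2}\frac{|V|}{|V|-1}\le\frac{2}{3}$ (valid for $|V|\ge 4$) combined with $2\delta<\frac13$. Your explicit verification that $N\mapsto\frac{N-2}{2(N-1)}$ is increasing with minimum $\frac13$ at $N=4$ is just a slightly more detailed rendering of the same numeric check the paper leaves implicit.
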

\begin{proof}
Follows from Lemma~\ref{property} and the fact that for trees $c(v^*)\leq \frac{1}{2}\frac{|V|}{|V|-1}\leq \frac{2}{3}$. 
\end{proof}

Proposition~\ref{Recovery} and Proposition~\ref{prop:big} imply the following result, by fixing $\delta=1/8$. 
\begin{corollary}\label{cor:big}
	If $G=(V,E)$ is a tree, $|V|\geq 4$, and $\hat{v}={\tt sCentral}(V)$ then $$\mathbb{P}\left(c(\hat v)>\sqrt\frac{{11}}{12}\right)\;\;\leq\;\;  2|V|\exp(-\kappa/32).$$
\end{corollary}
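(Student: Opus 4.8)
The goal is to combine the two preceding results with the specific choice $\delta = 1/8$, so the plan is essentially a bookkeeping exercise: Proposition~\ref{Recovery} controls the probability that ${\tt sCentral}$ returns a vertex whose $s$-value is not much larger than the optimum $s(v^\circ)$, and Proposition~\ref{prop:big} translates a bound on $s(\hat v)$ into a bound on $c(\hat v)$ for trees. First I would invoke Proposition~\ref{Recovery} with $\delta = 1/8$ and $|V| = n$, which gives
\[
\mathbb{P}\!\left(s(\hat v) \geq s(v^\circ) + \tfrac14\right) \;\leq\; 2n\exp\!\left(-2\cdot\tfrac{1}{64}\cdot\kappa\right) \;=\; 2n\exp(-\kappa/32).
\]
So outside an event of this probability we have $s(\hat v) < s(v^\circ) + 2\delta$ with $\delta = 1/8$.

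On that good event, Proposition~\ref{prop:big} applies (note $|V| \geq 4$ is part of the hypothesis, and $\delta = 1/8 < 1/6$ as required), yielding $c(\hat v) < \sqrt{c(v^*) + 2\delta} = \sqrt{c(v^*) + 1/4}$. Then I would use the tree bound $c(v^*) \leq \frac12\frac{|V|}{|V|-1}$ from Proposition~\ref{prop:big}'s proof (or directly the cited fact about centroids): since $|V| \geq 4$, we have $\frac{|V|}{|V|-1} \leq \frac43$, hence $c(v^*) \leq \frac23$, and therefore
\[
c(\hat v) \;<\; \sqrt{\tfrac23 + \tfrac14} \;=\; \sqrt{\tfrac{8+3}{12}} \;=\; \sqrt{\tfrac{11}{12}}.
\]
Combining the two steps: the complement event, on which $c(\hat v) > \sqrt{11/12}$ could occur, is contained in the bad event of Proposition~\ref{Recovery}, so $\mathbb{P}(c(\hat v) > \sqrt{11/12}) \leq 2n\exp(-\kappa/32)$, which is exactly the claim.

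There is no real obstacle here — the only thing to be careful about is the constant arithmetic: checking that $\delta = 1/8$ simultaneously satisfies $\delta < 1/6$ (so Proposition~\ref{prop:big} gives a nontrivial bound) and produces the clean exponent $\kappa/32$ from $2\delta^2\kappa = \kappa/32$, and that $\frac23 + \frac14$ indeed equals $\frac{11}{12}$ under the square root. One subtlety worth stating explicitly in the write-up is that the event $\{c(\hat v) > \sqrt{11/12}\}$ is indeed a \emph{subset} of $\{s(\hat v) \geq s(v^\circ) + 1/4\}$: this is the contrapositive of the chain of implications $s(\hat v) < s(v^\circ) + 1/4 \implies c(\hat v) < \sqrt{11/12}$ established by Proposition~\ref{prop:big} together with the centroid bound. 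Once that inclusion is noted, the probability bound transfers immediately and the proof is complete.
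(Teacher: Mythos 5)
Your proof is correct and follows exactly the route the paper intends: the paper simply states that Corollary~\ref{cor:big} follows from Proposition~\ref{Recovery} and Proposition~\ref{prop:big} by fixing $\delta=1/8$, and your write-up carries out precisely that combination, with the arithmetic $2\delta^2\kappa = \kappa/32$ and $\frac{2}{3}+\frac{1}{4}=\frac{11}{12}$ verified and the event inclusion stated explicitly. Nothing is missing.
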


In Proposition~\ref{prop:big} we used the fact that for trees $c(v^*)\leq \frac{2}{3}$ if $|V|\geq 4$. In  general, for graphs with small 2-connected components, we rely on the following result.
\begin{lemma}\label{newc}
Suppose $G=(V,E)$ is a connected graph. Let $d$ be the maximum degree of the block-cut tree, and let $b$ be the size of the largest 2-connected component of $G$. If $|V|> db$ then 
$$c(v^*)\;\;\leq\;\; 1-\frac{1}{2d}.$$
\end{lemma}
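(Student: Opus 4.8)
The plan is to run a centroid argument on the block-cut tree $T=\bc(G)$. First turn $T$ into a weighted tree: a block node $B$ gets weight $p(B)$ equal to the number of non-cut vertices of $G$ lying in $B$, and a cut-vertex node $a$ gets weight $1$. Since every vertex of $G$ is either a cut vertex or a non-cut vertex of a unique block, the total weight is $n:=|V|$. By the standard weighted-tree centroid argument there is a node $x^*$ of $T$ such that every connected component of $T\setminus x^*$ has weight at most $n/2$ (iterate: while some component of $T\setminus x$ has weight $>n/2$, move $x$ one step into it; the heavy component loses at least one node each step, so this terminates). Before splitting into cases, note that $n>db$ forces $G$ to have a cut vertex — otherwise $G$ is a single block and $b=n$, contradicting $n>db\ge b$ — and hence $\bc(G)$ has at least three nodes, which forces $d\ge 2$ and $b\ge 2$; in particular $n>db\ge 2b$ and $n>db\ge 2d$.

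If $x^*$ is a cut vertex $a$, I would delete $a$ from $G$. The components of $G\setminus a$ are in bijection with the components $T_1,\dots,T_m$ of $T\setminus a$ (one per block containing $a$), and a short count with the weights shows the component matching $T_i$ has exactly $p(T_i)\le n/2$ vertices. Hence $c(a)\le \frac{n/2}{n-1}$, which is $\le 1-\frac{1}{2d}$ by the elementary inequality valid once $n>2d$; since $v^*=\argmin_v c(v)$, this already gives $c(v^*)\le 1-\frac{1}{2d}$.

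The substantive case is $x^*=B$ a block. Let $a_1,\dots,a_k$ be the cut vertices of $B$ — equivalently, the neighbours of $B$ in $T$, so $1\le k\le d$ (the lower bound because $B\ne G$, since $|B|\le b<n$; the upper bound by hypothesis). Let $T_i$ be the component of $T\setminus B$ containing $a_i$, and put $p_i:=p(T_i)\le n/2$. Writing $V(T_i)$ for the union of the blocks appearing in $T_i$, a routine computation gives $|V(T_i)|=p_i$ and $\sum_i p_i=n-(|B|-k)$, hence
$$\max_i p_i\ \ge\ 1+\frac{n-|B|}{k}\ \ge\ 1+\frac{n-b}{d}.$$
Pick $\ell$ achieving this maximum and delete $a_\ell$ from $G$. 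Because $B$ is $2$-connected and each branch $V(T_j)$ with $j\ne\ell$ attaches to $B$ at $a_j\ne a_\ell$, the set $D_0:=(B\setminus\{a_\ell\})\cup\bigcup_{j\ne\ell}\bigl(V(T_j)\setminus\{a_j\}\bigr)$ is a single connected component of $G\setminus a_\ell$ of size exactly $n-p_\ell$, while all remaining components lie inside $V(T_\ell)\setminus\{a_\ell\}$ and have total size $p_\ell-1<n-p_\ell$ (using $p_\ell\le n/2$). Therefore
$$c(v^*)\ \le\ c(a_\ell)\ =\ \frac{n-p_\ell}{n-1}\ \le\ \frac{(d-1)n+b-d}{d(n-1)},$$
and a one-line computation shows the right-hand side is $\le 1-\frac{1}{2d}$ precisely when $n\ge 2b-1$, which holds since $n>db\ge 2b$.

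The main obstacle is this last case: the centroid of the block-cut tree may be a block, which is not a vertex of $G$ one can delete (and there are graphs in which no cut vertex at all is balanced), so one must delete a suitably chosen adjacent cut vertex and control the imbalance this introduces. The loss from not being able to ``delete the block itself'' is the mass $n-|B|$ spread over the $k\le d$ branches hanging off $B$, and averaging over these branches is exactly what degrades a $\frac12$-type bound to the $\frac1{2d}$-type bound in the statement. The only remaining technical content is the counting identities used above — that the component of $G\setminus a$ matching $T_i$ has $p(T_i)$ vertices in the cut-vertex case, and $|V(T_i)|=p_i$, $\sum_i p_i=n-|B|+k$, $|D_0|=n-p_\ell$ in the block case — each of which follows from the facts that every edge of $G$ lies in a unique block and that a cut vertex of $B$ lies in exactly one branch $T_i$.
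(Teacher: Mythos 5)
Your proof is correct, but it takes a genuinely different route from the paper. The paper works directly with the optimal vertex $v^*=\argmin_v c(v)$: it locates the block $B^*$ with $v^*\in B^*$ and $B^*\setminus\{v^*\}$ in the heavy component $C^*$, picks the cut vertex $\overline v\in B^*$ with the most mass hanging off it away from $B^*$ (averaging over the $\le d-1$ such cut vertices), and then uses the \emph{optimality} of $v^*$ together with a comparison $|C^*|\le|\overline C|$ to force the bound $c(v^*)\le 1-\tfrac1d\left(1-\tfrac{b}{|V|-1}\right)$, from which the claim follows once $|V|\ge db+1$. Your proof instead assigns weights to the block-cut tree (weight $1$ to cut-vertex nodes, weight $=\#$non-cut vertices to block nodes, summing to $n$), locates a \emph{weighted centroid} of $\bc(G)$, and translates it into a cut vertex of $G$: directly if the centroid is a cut vertex, and by averaging the branch masses over the $\le d$ cut vertices of the centroidal block $B$ otherwise. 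Both proofs perform essentially the same $\le d$-way averaging to degrade a $\tfrac12$-type bound to a $1-\tfrac1{2d}$ bound; the difference is where that averaging happens. Your route is more constructive and self-contained — it exhibits an explicit candidate cut vertex via the standard centroid-iteration argument and never needs to reason indirectly about the unknown minimizer $v^*$ — at the price of setting up the weighted block-cut-tree bookkeeping (the identities $|V(T_i)|=p_i$, $\sum_i p_i=n-|B|+k$, $|D_0|=n-p_\ell$, which you correctly state and which do check out). The paper's route is shorter because it piggybacks on the optimality of $v^*$ instead of building a centroid, though it contains a harmless off-by-one in the identity $\sum_{v\ne v^*}k_v=|C^*|-|B^*|$ (it should be $|C^*|-|B^*|+1$), which the slack in the subsequent inequality absorbs. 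One small point worth tightening in yours: the connectedness of $D_0$ is clearest once you observe that $D_0$ equals $(B\setminus\{a_\ell\})\cup\bigcup_{j\ne\ell}V(T_j)$ (since each $a_j$ with $j\ne\ell$ is reinstated via $B\setminus\{a_\ell\}$), and that removing $a_\ell$ from the $2$-connected $B$ keeps it connected and glued to each $V(T_j)$ at $a_j$.
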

 \begin{figure}
 	\centering\includegraphics{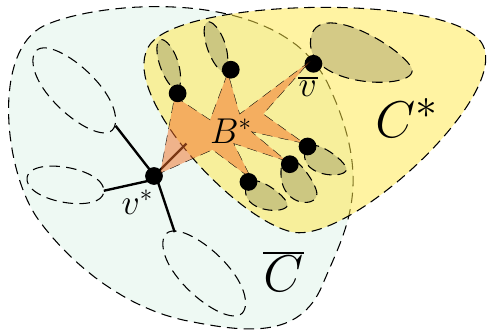}
 	\caption{Illustration of the proof of Lemma~\ref{newc}}\label{fig:lemma4}
 \end{figure}
\begin{proof}
If $|V|>db$ then $b<|V|$ and therefore $G$ has a cut vertex. Let $C^*$ be the largest component in $\cC^{v^*}$ and let $B^*$ be the 2-connected component of $G$ such that $v^*\in B^*$ and $B^*\setminus \{v^*\}\subseteq C^*$; c.f. Figure~\ref{fig:lemma4}. We can assume that there exists a cut vertex in $B^*\setminus \{v^*\}$, because otherwise $B^*=C^*$ and clearly $|V|\leq db$. For each cut vertex $v\neq v^*$ in $B^*$, let $k_v$ be the number of all vertices in the union of all the connected components of $\cC^v$ excluding the component containing $B^*$ (grey blobs in Figure~\ref{fig:lemma4}). By construction, $\sum_{v\neq v^*} k_v=|C^*|-|B^*|$ and there are at most $d-1$ such vertices. If $\overline v=\arg\max_{v\neq v^*} k_v$, then $k_{\overline v}\geq \frac{1}{d-1}(|C^*|-|B^*|)$. Denote by $\overline C$ the largest connected component of $G\setminus\{\overline v\}$. By optimality of $v^*$, $\overline C$ must be equal to the connected component  containing the complement of $C^*$. Thus we have
\begin{eqnarray*}
|C^*|\;\leq\; |\overline C|&\leq & (|V|-1-|C^*|)+|B^*|+(|C^*|-|B^*|-k_{\overline v})\leq\\
&\leq &	(|V|-1-|C^*|)+|B^*|+\frac{d-2}{d-1}(|C^*|-|B^*|)
\end{eqnarray*}
Now simple algebra and the fact that $|B^*|\leq b$ allow us to rewrite this inequality as 
$$c(v^*)\;\leq\; 1-\frac{1}{d}\left(1-\frac{b}{|V|-1}\right).$$ To prove our inequality, it is enough to show that $\frac{b}{|V|-1}\leq\frac{1}{2}$, which is obvious if $|V|\geq db+1$.\end{proof}

Proposition~\ref{prop:big} and Lemma~\ref{newc}  imply that whenever $s(v)<s(v^\circ)+2\delta$ and $\delta<\frac{1}{4d}$ then $c(v)<1$. 
By choosing $\delta=\frac{1}{8d}$, the following corollary follows from Proposition~\ref{Recovery}. 
\begin{corollary}\label{cor:big2}
		Let $G=(V,E)$ be a graph with $|V|> db$ and let $\hat{v}={\tt sCentral}(V)$. Then
		$$\mathbb P\left(c( \hat{v})> \frac{\sqrt{4d-1}}{\sqrt{4d}}\right)\;\;\leq\;\;  2|V|\exp\left(-\frac{\kappa}{32d^2}\right).$$
\end{corollary}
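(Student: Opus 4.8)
The plan is to combine the three preceding results — Lemma~\ref{newc}, Proposition~\ref{prop:big}, and Proposition~\ref{Recovery} — with the single parameter choice $\delta=\frac{1}{8d}$, and to pass through the contrapositive of Proposition~\ref{prop:big}.

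First I would invoke Lemma~\ref{newc}: since $|V|>db$ by hypothesis, we get the deterministic bound $c(v^*)\le 1-\frac{1}{2d}$. Next, by Proposition~\ref{prop:big}, on the event $\{s(\hat v)<s(v^\circ)+2\delta\}$ one has
$$
c(\hat v)\;<\;\sqrt{s(v^\circ)+2\delta}\;\le\;\sqrt{c(v^*)+2\delta}\;\le\;\sqrt{1-\tfrac{1}{2d}+2\delta}.
$$
Plugging in $\delta=\frac{1}{8d}$ gives $1-\frac{1}{2d}+2\delta=1-\frac{1}{4d}=\frac{4d-1}{4d}$, so on this event $c(\hat v)<\sqrt{(4d-1)/(4d)}$. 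Taking complements, the event $\{c(\hat v)>\sqrt{(4d-1)/(4d)}\}$ is contained in the event $\{s(\hat v)\ge s(v^\circ)+2\delta\}$.

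Finally I would apply Proposition~\ref{Recovery} with this same $\delta=\frac{1}{8d}$ to bound the latter probability by $2|V|\exp(-2\delta^2\kappa)=2|V|\exp(-\kappa/(32d^2))$, which is exactly the claimed estimate. There is no real obstacle here — the only thing to watch is the bookkeeping: that the arithmetic $1-\frac{1}{2d}+2\delta=\frac{4d-1}{4d}$ is consistent with the stated threshold, that $2\delta^2=\frac{1}{32d^2}$ reproduces the stated exponent, and that the inclusion of events runs in the correct direction (we need $s(\hat v)$ small $\Rightarrow$ $c(\hat v)$ small, hence $c(\hat v)$ large $\Rightarrow$ $s(\hat v)$ large).
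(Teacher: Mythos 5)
Your proof is correct and follows exactly the route the paper intends: combine Lemma~\ref{newc} (giving $c(v^*)\le 1-\tfrac{1}{2d}$ from $|V|>db$) with Proposition~\ref{prop:big} to get the event inclusion, then apply Proposition~\ref{Recovery} with $\delta=\tfrac{1}{8d}$; the arithmetic $1-\tfrac{1}{2d}+2\delta=\tfrac{4d-1}{4d}$ and $2\delta^2=\tfrac{1}{32d^2}$ checks out. The paper states this compressed into a sentence, but the reasoning is identical.
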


\subsection{Recovering a tree}\label{sec:tree}

In this section we present procedure {\tt ReconstructTree} (Algorithm~\ref{tmain}), which efficiently recovers the structure of the tree $T=\mathcal G(\Sigma)$. We start the procedure by running ${\tt ReconstructTree}([n])$. The algorithm updates an edge set $\hat E$ that is initiated as $\hat E=\emptyset$. At each call, if $|V|>1$, $V$ gets partitioned into sets $V_1,\dots ,V_m$ by procedure {\tt ComponentsTree} (Algorithm~\ref{split}) and the edge set $\hat E$ gets updated. Then, {\tt ReconstructTree} recurses into all the generated sets.

${\tt ComponentsTree}(V)$ first picks a central vertex $w={\tt sCentral}(V)$. It then sorts, in descending order, the absolute values of the pairwise correlations $\rho _{uw}=\frac{\Sigma_{uw}}{\sqrt{\Sigma_{uu}\Sigma_{ww}}}$, where $u\in V\setminus \{w\}$, and places them in an ordered list $B$. For every vertex $u$ in the sorted list, the algorithm checks whether $u$ is separated by $w$ from any of the known neighbours $v$ of $w$, or equivalently, if $\det(\Sigma_{uw,vw})=0$; c.f. Section~\ref{sec:central}. If this happens, then it adds $u$ to the connected component where $v$ belongs after removing $w$. If this does not happen, then a new connected component is registered corresponding to the vertex $u$ and the edge $uv$ is added in $\hat{E}$. In the end, ${\tt ComponentsTree}$ returns the vertex sets of all such connected components $V_1,\ldots,V_m$. The edges between $w$ and each of the $m$ neighbours in the $m$ connected components are added to $\hat E$.

\begin{algorithm}
 \If{$|V|>1$}{$V_{1},\dots,V_{m}\leftarrow {\tt ComponentsTree}(V)$\; \For{\upshape $i$ from $1$ to $m$}{ {\tt ReconstructTree}$(V_{i})$; } }
\vspace*{.1cm}
\label{tmain}
\caption{${\tt ReconstructTree}(V)$}
\end{algorithm}

\begin{algorithm}

  $w\gets {\tt sCentral}(V )$\; $N\gets \emptyset$\; 
Sort $|\rho_{uw}|$ for $u\in V\setminus \{w\}$ in decreasing order and put them in list $B$\;
  \For {\upshape every $u$ \upshape  in the order of $B$}{
$t:= true$\;
  \For{all $v\in N$}{\If{ $\det(\Sigma_{uw,vw})\neq 0$ }{$V_{v}\leftarrow V_{v}\cup\{u\}$;\,\, $t:= false$\;}}
\If{t=true}{$\hat E\leftarrow \hat E\cup \{uw\}$\; 
$N\gets N\cup\{u\}$, $V_{u}\gets \{u\}$\;
}
}
Return all  $V_{u}$ for $u\in N$\;

\label{split}
\caption{${\tt ComponentsTree}(V)$}
\end{algorithm}

\begin{proposition}
Algorithm~\ref{tmain} is correct, that is, if $\mathcal G(\Sigma)$ is a tree $T$
then ${\tt ReconstructTree}([n])$ gives $\hat E=E(T)$.\end{proposition}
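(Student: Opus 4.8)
The plan is to prove correctness by induction on $|V|$, showing that at every recursive call ${\tt ReconstructTree}(V)$ is invoked with $V$ equal to the vertex set of a connected component of $T$ after deleting all previously processed centroids, and that the edges added during the call are exactly the edges of $T$ incident to the current centroid $w$ within that component. The base case $|V|=1$ is trivial since nothing is added. For the inductive step, fix a call with $|V|>1$ on a set $V$ that (by the inductive hypothesis) induces a subtree $T[V]$ of $T$; note $T[V]$ is a tree because induced subgraphs of trees on connected vertex sets are trees, and I would record at the outset why $V$ is always connected in $T$ — deleting a vertex from a tree yields components each of which is again connected.

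The heart of the argument is to verify that ${\tt ComponentsTree}(V)$ correctly identifies (i) the edges of $T[V]$ incident to $w$ and (ii) the partition of $V\setminus\{w\}$ into the vertex sets of the connected components of $T[V]\setminus\{w\}$. Here I would use Lemma~\ref{mntool3} (with $A=\{u\}$, $B=\{v\}$, $C=\{w\}$): since $\mathcal G(\Sigma)=T$ is a tree, $\det(\Sigma_{uw,vw})=0$ iff ${\rm rank}(\Sigma_{uw,vw})={\rm rank}(\Sigma_{uv})=1$ iff $\{w\}$ is (contained in) a minimal separator of $u$ and $v$, i.e.\ iff $w$ lies on the unique path $\overline{uv}$ in $T$. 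Equivalently, $u$ and $v$ are in different components of $T\setminus\{w\}$ — and, since we only ever test $u,v\in V$, in different components of $T[V]\setminus\{w\}$. So the test $\det(\Sigma_{uw,vw})\neq 0$ detects exactly when $u$ and $v$ lie in the \emph{same} component of $T[V]\setminus\{w\}$.

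The remaining point is the key one: why does processing vertices $u$ in \emph{decreasing order of $|\rho_{uw}|$} guarantee that the first vertex encountered from each component of $T[V]\setminus\{w\}$ is the unique neighbour of $w$ in that component, so that the edge $uw$ recorded in $\hat E$ is genuinely an edge of $T$? By Lemma~\ref{lem:prodrho}, $\rho_{uw}=\prod_{e\in\overline{uw}}\rho_e$, and since $\mathcal G(\Sigma)$ is connected, Theorem~\ref{th:STDrank} (applied to pairs of singletons, i.e.\ $\sigma_{ij}\neq 0$) gives $|\rho_e|<1$ for every edge $e$ — actually $|\rho_e|\le 1$ always and $|\rho_e|=1$ would force collinearity contradicting positive definiteness, while $|\rho_e|>0$ since $\Sigma$ has no zero entries. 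Hence within any fixed component $C$ of $T[V]\setminus\{w\}$, if $v$ is the (unique) neighbour of $w$ in $C$ and $u\in C$ is arbitrary, then $\overline{uw}=\overline{uv}\cup\{vw\}$, so $|\rho_{uw}|=|\rho_{uv}|\,|\rho_{vw}|\le|\rho_{vw}|$, with strict inequality unless $u=v$. Therefore the maximum of $|\rho_{uw}|$ over $u\in C$ is attained uniquely at $v$, so when the sorted list $B$ first reaches a vertex of $C$, that vertex is $v$; at that moment $N$ contains only neighbours of $w$ in previously-discovered components (so all tests $\det(\Sigma_{vw,v'w})\neq 0$ fail, as $v$ is separated from each such $v'$ by $w$), hence $t$ stays \texttt{true}, the edge $vw$ is added, and a new component $V_v=\{v\}$ is opened. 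Every later $u\in C$ then has $\det(\Sigma_{uw,vw})\neq 0$, so it is correctly appended to $V_v$; and $u$ is never wrongly added to a different $V_{v'}$ with $v'$ in another component, since $w$ separates $u$ from $v'$.

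The main obstacle — the step that actually carries the proof — is precisely this ordering argument: one must argue that correlations are strictly multiplicative along tree paths and strictly less than one in modulus on each edge, so that the first vertex of each component in the sorted list is forced to be the neighbour of $w$. Everything else (the rank/separator dictionary from Lemma~\ref{mntool3}, the reduction to the induced subtree, the bookkeeping that $\hat E$ accumulates exactly the incident edges at each centroid and that the recursion exhausts all edges of $T$ — each edge $xy$ being recorded at the first call whose centroid is $x$ or $y$) is routine induction. I would close by observing that since the recursion partitions $V\setminus\{w\}$ and recurses on each part with strictly smaller size, every vertex eventually becomes a centroid of some call, so every edge of $T$ is added, and no non-edge is ever added because edges are only recorded between $w$ and a vertex $v$ that is the $w$-neighbour of its component; hence $\hat E=E(T)$.
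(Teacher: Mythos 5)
Your proof is correct and follows essentially the same route as the paper's: induction over the recursion tree, the product formula of Lemma~\ref{lem:prodrho} to show that the neighbour of $w$ in each component of $T[V]\setminus\{w\}$ uniquely maximizes $|\rho_{uw}|$ (so it is the first vertex of that component encountered in the sorted list), and the determinant/rank test to assign remaining vertices to components. You are slightly more explicit than the paper in noting that $0<|\rho_e|<1$ on each edge (from positive definiteness and the connectivity/no-zero-entries assumption), which is the fact that makes the inequality strict — a nice clarification, but not a different argument.
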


\begin{proof}
After every call of {\tt ComponentsTree}$(V)$ it holds that $\bigcup_{i=1}^m V_{i}=V\setminus \{w\}$, hence ${\tt ReconstructTree}([n])$ always terminates.

For every $w\in [n]$, either  $w={\tt sCentral}(V)$ in one of the calls of {\tt ComponentsTree} (call such a vertex central), or $\{w\}$ is one of the components $V_1,\ldots, V_m$ that are returned by ${\tt ComponentsTree}(V)$ (call such a vertex terminal). 

Initially the algorithm picks a vertex $w={\tt sCentral}([n])$, which induces the partition of $[n]\setminus \{w\}$, $\cC^{w}=\{V_1,\ldots,V_m\}$. The vertices $u\in [n]\setminus \{w\}$ are examined in descending order with respect to $|\rho_{uw}|$.  Let $v\in N(w)$ be adjacent to $w$ and let $u$ be any other other vertex in the same connected component $C\in \cC^w$ as $v$. Then $v$ separates $u$ and $w$ and, in particular, by Lemma~\ref{lem:prodrho}, $|\rho_{vw}|>|\rho_{uw}|$. This shows that, for any $C\in \cC^w$, the vertex $v$ in $C$ which is a neighbour of $w$ comes earlier in the order specified in the algorithm than any other vertex in $C$. Hence, $v\in N$ (c.f. Algorithm~\ref{split}) and for all other $u\in C$ it holds that $\det (\Sigma_{uw,vw})\neq 0$. This shows that in the first call of {\tt ComponentsTree} the algorithm:
\begin{enumerate}
	\item[(i)] adds to $\hat E$ the edges between the central vertex $w$ and its neighbours in $T$, 
	\item[(ii)] assigns all vertices to their connected components in $\cC^w$.
\end{enumerate}

Since each $G[V_i]$ is a tree, the same argument can be applied to subsequent calls of {\tt ReconstructTree}. Hence, by induction, these two properties hold at any call of the algorithm {\tt ComponentsTree}. In particular, $\hat E\subseteq E(T)$. To show the opposite inclusion first note that if $uv\in E(T)$ and $u$ or $v$ is central, then $uv\in \hat{E}$ by (i). Moreover, if $u,v$ are both terminal, then there is some call of {\tt ComponentsTree} that places them in different sets $V_i$. Then by (ii), there is no edge $uv$ in $E$.
\end{proof}

Subroutine {\tt sCentral} is a probabilistic component of the algorithm that is essential to obtain good complexity bounds. 

\begin{theorem}\label{mmain}
Suppose $\mathcal G(\Sigma)$ is a tree $T=([n],E)$ with maximum degree $\Delta(T)\leq d$. 
Fix $\epsilon <1$ and define $\kappa=\lceil 32\log\!\left( \frac{2n^2}{\epsilon } \right)\rceil$ to be the parameter of Algorithm~\ref{s-algo}. 
Then, with probability at least $1-\epsilon $, Algorithm~\ref{tmain} requires time and queries of order
$$\mathcal{O} \left( n\log(n) \max\left\{\log\left(\frac{n}{\epsilon}\right),d\right\} \right).$$
\end{theorem}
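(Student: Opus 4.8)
The plan is to treat correctness as already settled (it was established in the preceding proposition) and to bound only the running time and the number of covariance queries, on a high-probability event. I would organize the argument in three steps: the cost of a single call of \texttt{ComponentsTree}, the shape of the recursion tree, and a union bound over all invocations of \texttt{sCentral}.

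First I would bound the cost of one call \texttt{ComponentsTree}$(V)$ with $|V|=k$. Computing $w=\texttt{sCentral}(V)$ costs $\mathcal O(k\kappa)$ in time and queries by Proposition~\ref{Recovery}; forming and sorting the list of values $|\rho_{uw}|$ costs $\mathcal O(k\log k)$ time and $\mathcal O(k)$ queries (a constant number of entries of $\Sigma$ per $u$); and the main loop costs $\mathcal O(kd)$, because — as already observed in the correctness proof via Lemma~\ref{lem:prodrho} — the set $N$ of discovered neighbours of $w$ never exceeds $\deg(w)\le d$ in $T$, so for each $u$ the inner loop over $N$ runs at most $d$ times, each iteration a single $2\times2$ determinant check using $\mathcal O(1)$ queries. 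Hence one call costs $\mathcal O\big(k(\kappa+d+\log k)\big)$.

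Next I would analyse the recursion tree. The children of a call on $V$ operate on $V_1,\dots,V_m$, which partition $V\setminus\{w\}$; thus the sets appearing at any fixed depth are pairwise disjoint subsets of $[n]$, their sizes sum to at most $n$, and the total cost of one depth level is $\mathcal O\big(n(\kappa+d+\log n)\big)$. To bound the depth I would condition on the event $\mathcal E$ that every call of \texttt{sCentral} on a set of size at least $4$ returns $\hat v$ with $c(\hat v)\le\sqrt{11/12}$. On $\mathcal E$, each such split produces pieces of size at most $\sqrt{11/12}\,(|V|-1)<\sqrt{11/12}\,|V|$, so along every root-to-leaf path the sizes decrease geometrically until they fall below $4$, after which only $\mathcal O(1)$ further levels (and $\mathcal O(n\kappa)$ extra work in total) remain; hence the depth is $\mathcal O(\log n)$ and the total cost on $\mathcal E$ is $\mathcal O\big(n\log n\,(\kappa+d+\log n)\big)$.

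Finally I would bound $\PROB(\mathcal E)$. Each vertex of $[n]$ is the central vertex of at most one call, so \texttt{sCentral} is invoked at most $n$ times, and this bound holds \emph{deterministically}, which is what makes the union bound legitimate. By Corollary~\ref{cor:big}, a single call on a set of size at most $n$ violates the requirement of $\mathcal E$ with probability at most $2n\exp(-\kappa/32)\le\epsilon/n$ for $\kappa=\lceil32\log(2n^2/\epsilon)\rceil$, so the union bound gives $\PROB(\mathcal E)\ge1-\epsilon$. Since $\epsilon<1$ implies $\log n\le\log(n/\epsilon)$ and $\kappa=\mathcal O(\log(n/\epsilon))$, we get $\kappa+d+\log n=\mathcal O(\max\{\log(n/\epsilon),d\})$, and the claimed bound $\mathcal O\big(n\log n\,\max\{\log(n/\epsilon),d\}\big)$ follows on $\mathcal E$. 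The step I expect to be the main obstacle is making the depth argument fully rigorous: pinning down that a constant-factor shrinkage at every internal node forces logarithmic depth, correctly handling the base cases of size $\le 3$ (to which Corollary~\ref{cor:big} does not apply) and the small amount of extra work they generate, and being careful that the count of \texttt{sCentral} calls is deterministic so that conditioning on $\mathcal E$ is well founded.
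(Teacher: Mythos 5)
Your proposal follows essentially the same route as the paper's proof: the same per-call cost breakdown $\mathcal O(|V|(\kappa+d+\log|V|))$, the same disjointness observation at each recursion level, the same geometric-shrinkage argument via Corollary~\ref{cor:big} to bound the depth by $\mathcal O(\log n)$, and the same union bound over at most $n$ calls to \texttt{sCentral} with the chosen $\kappa$. Your added remarks---that the bound of $n$ on the number of \texttt{sCentral} calls is deterministic so the union bound and conditioning on $\mathcal E$ are legitimate, and that the base cases $|V|\le 3$ need separate (trivial) handling---are correct refinements of points the paper treats more tersely, but they do not change the argument.
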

\begin{proof}
First we analyze the complexity of one call of ${\tt ComponentsTree}(V)$. By Proposition~\ref{Recovery}, the call of ${\tt sCentral}(V)$ takes time and queries of the order $\mathcal{O}\big(|V|\kappa\big)$. We then query $|V|$ pairwise correlations and sort them, which takes time  $\mathcal{O}(|V|\log |V|)$. Partitioning $V$ into sets $V_i$ takes time and queries both of order $\mathcal{O}(d|V|)$ since $|N|\leq d$. For all calls of {\tt ReconstructTree}$(V_i)$ in a recursion level (i.e., distance from the first {\tt ReconstructTree} call in the recursion tree), it holds that $V_i\cap V_j=\emptyset $. Hence, in each recursion level the time complexity is of order $\mathcal{O}(n\log n+ n\kappa+nd )$ and the query complexity is of order $\mathcal{O}(n\kappa+nd )$.

Assume first that $\hat v={\tt sCentral}(V)$ satisfies $c(\hat v)\leq \alpha:=\sqrt{11/12}$ in each call with $|V|\geq 4$. In this case the recursion depth is at most $\log_{1/\alpha}(n)+4$ and, overall, the algorithm has time  complexity $\mathcal{O} \big( n\log(n) (\log(n) +\kappa+d) \big) $ and query complexity $\mathcal{O} \big( n\log(n)  (\kappa+d) \big) $. Since $\kappa=\mathcal O(\log(n/\epsilon)+\log d)$, the announced bounds follow.

It remains to show that, with the given choice of $\kappa$, with probability at least $1-\epsilon$ we get that $c(\hat v)\leq \alpha$ in each call with $|V|\geq 4$. By Corollary~\ref{cor:big}, in a single call the probability that $c(\hat v)> \alpha$ is at most $2|V|\exp (- \kappa/32)$, which is further bounded by $2n\exp (- \kappa/32)$. As {\tt ReconstructTree}$([n])$ runs, the procedure {\tt sCentral} is called at most $n$ times, which is the total number of available vertices. From the union bound, the probability that in at least one call $\hat v={\tt sCentral}(V)$ satisfies $c(\hat v)> \alpha$ is at most $2n^2 \exp (- \kappa/32)$. Demanding the latter to be at most $\epsilon $, we obtain the indicated value for $\kappa$ and the desired result.\end{proof}

\subsection{Graphs with small blocks}\label{sec:smallblocks}

We now present an algorithm that recovers concentration graphs with small 2-connected components and small maximum degree of the block-cut tree. The procedure ${\tt ReconstructSB}$ (Algorithm~\ref{tlmain}) takes as input a vertex set $V$ and, like in the tree case, updates the global variable $\hat E$, which is initially set as $\hat E:=\emptyset$. If $V$ is small enough, that is, $|V|\leq db$, then the algorithm reconstructs the induced graph over $V$ by directly inverting the matrix $\Sigma_{V}$. Otherwise it calls ${\tt ComponentsSB}$, which first finds a vertex $w={\tt sCentral}(V)$ and  returns sets $C\cup \{w\}$ for all $C\in \cC^w$. This part of the algorithm is similar to ${\tt ComponentsTree}(V)$, but the edges incident with $w$ are not recovered at this stage. %
\begin{algorithm}
 \uIf{$|V|> db$}{$V_{1},\dots,V_{m}\leftarrow {\tt ComponentsSB}(V)$\;
\For{$i$ \upshape from $1$ to $m$}{ 
		${\tt ReconstructSB}(V_{i})$\;	
		} }
\Else{$\hat E \gets  \hat{E} \cup E(\mathcal G(\Sigma_{V,V}))$;}
\label{tlmain}
\caption{${\tt ReconstructSB}(V )$}
\end{algorithm}
\begin{algorithm}
  $w\leftarrow {\tt sCentral}(V)$\;  
  $N:= \emptyset $\tcp*{contains one vertex from each $C\in \cC^w$}
  \For{\upshape all $u\in V\setminus \{w\}$}{
  	\If{\upshape there exists  $v\in N$, $\det(\Sigma_{uw,vw})\neq 0$}{$V_{v}\gets V_{v}\cup \{u\}$\; }\Else{$V_{u}\leftarrow \{u\}$\; $N\leftarrow N\cup \{u\}$;} }
	 \Return{$V_u\cup \{w\}$ for all $u\in N$\;}
\label{tlsplit}
\caption{${\tt ComponentsSB}(V)$}
\end{algorithm}

\begin{proposition}
Algorithm~\ref{tlmain} is correct, that is, if $\Sigma\in \mathcal M(G)$ and $\Sigma$ is generic then ${\tt ReconstructSB}([n])$ gives $\hat{E}=E(G)$.
\end{proposition}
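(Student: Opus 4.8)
The plan is to mirror the structure of the correctness proof for {\tt ReconstructTree}, adapting the two key claims to the block-cut setting. First I would argue termination: after every call of ${\tt ComponentsSB}(V)$ the returned sets satisfy $\bigcup_{u\in N}(V_u\cup\{w\})=V$, and each $V_u\cup\{w\}$ is strictly smaller than $V$ whenever $|V|>db$ and $G[V]$ has a cut vertex (which it does, since $|V|>db\geq b$ forces $G[V]$ to have more than one block — here I would invoke Lemma~\ref{newc} or the elementary fact that $|V|>b$ rules out $G[V]$ being a single block). So every recursion branch eventually reaches the base case $|V|\le db$, where $E(\mathcal G(\Sigma_{V,V}))$ is computed exactly by inverting $\Sigma_V$.

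The heart of the argument is showing that the partition produced by ${\tt ComponentsSB}(V)$ is exactly $\{C\cup\{w\}:C\in\cC^w\}$, where $w={\tt sCentral}(V)$ and $\cC^w$ are the connected components of $G[V]\setminus w$. By Assumption~\ref{mntool} and Lemma~\ref{mntool3} (with $A=\{u\}$, $B=\{v\}$, $C=\{w\}$), we have $\det(\Sigma_{uw,vw})=0$ if and only if $\{w\}$ is a minimal separator of $u$ and $v$ in $\mathcal G(\Sigma)$, i.e. if and only if $w$ separates $u$ from $v$; equivalently, $u$ and $v$ lie in different components of $G[V]\setminus w$. (One subtlety: the algorithm works with the global matrix $\Sigma$ rather than $\Sigma_V$, so I would note that for $u,v,w\in V$, separation by $w$ in $\mathcal G(\Sigma)$ restricted to the relevant subproblem agrees with separation in $G[V]$ — this follows because earlier recursive splits have already isolated $V$ along genuine separators, so no path between vertices of $V$ leaves $V$ without passing through a vertex already removed; I would make this precise by an induction invariant stating that at each call $G[V]$ is an induced subgraph whose separations by $w$ coincide with those tested.) Granting this, a straightforward induction on the order of processing shows that $N$ picks up exactly one representative per component of $\cC^w$ and each remaining $u$ is appended to the correct $V_v$, so the returned sets are precisely $C\cup\{w\}$ for $C\in\cC^w$.

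Finally I would assemble correctness of $\hat E$. Each 2-connected component $B$ of $G$ lies entirely within one of the sets passed down the recursion: since $w={\tt sCentral}(V)$ is a single vertex, either $B$ is the trivial block $\{w,u\}$ for some edge $wu$ — but such an edge is not recovered at the ${\tt ComponentsSB}$ stage, so I must check it is recovered later — or $B\setminus\{w\}$ lies in a single component $C\in\cC^w$ and hence $B\subseteq C\cup\{w\}$, which is passed to the next recursive call. Iterating, every block $B$ is eventually contained in a base-case set $V$ with $|V|\le db$ (it cannot be split further once it is the unique block, and it has size $\le b\le db$), where ${\tt ReconstructSB}$ adds $E(\mathcal G(\Sigma_V))\supseteq E(G[V])$; this captures all edges of $B$. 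Conversely, every edge added either comes from an exact inversion at the base case (hence is a true edge by Theorem~\ref{th:STDrank}) or — and this is the point I must double-check — there is no ${\tt ComponentsSB}$ line that adds edges, unlike in the tree case. Re-examining Algorithm~\ref{tlsplit}, indeed $\hat E$ is only touched in the base case of Algorithm~\ref{tlmain}, so $\hat E\subseteq E(G)$ is immediate and the reverse inclusion is exactly the ``every block reaches a base case'' statement above.

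\textbf{Main obstacle.} The delicate point is the bookkeeping that links separation ``in the global $\mathcal G(\Sigma)$ as tested by $\det(\Sigma_{uw,vw})$'' with separation ``in the induced subgraph $G[V]$ that the recursion is conceptually operating on.'' One needs the induction invariant that each recursively-passed $V$ is separated from the rest of the graph by vertices already assigned (the successive ${\tt sCentral}$ cut-vertices), so that restricting to $G[V]$ does not create or destroy any $w$-separations among triples inside $V$. Establishing this cleanly — and confirming that trivial two-vertex blocks are genuinely absorbed into some base-case call rather than lost — is where the real care is needed; the probabilistic quality of ${\tt sCentral}$ plays no role in correctness, only the fact that it returns some vertex of $V$.
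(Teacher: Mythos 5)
Your proposal is correct and mirrors the paper's argument: the paper also shows by induction that each recursively-passed set $V_i=C_i\cup\{w\}$ satisfies $E(G[V_i])=E(\mathcal G(\Sigma_{V_i}))$, which it obtains from the observation (your ``induction invariant'') that any minimal separator in $G$ of two subsets of $V_i$ is itself contained in $V_i$, so separation tests on the global $\Sigma$ agree with separation in $G[V_i]$. The only cosmetic imprecision is your phrase about paths ``passing through a vertex already removed'': the cut vertex $w$ is in fact retained in every $V_i$, and what makes the invariant work is that any simple path between two vertices of $V_i$ that leaves $V_i$ would have to revisit $w$, so it stays inside $V_i$ after shortcutting at $w$ --- exactly the fact the paper's minimal-separator claim encodes.
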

\begin{proof} Assume we are on the first call of {\tt ReconstructSB}. If $n\leq db$ then the algorithm outputs $\mathcal G(\Sigma)$, which, by definition, is the correct graph. If $n>db$, then $G$ contains a cut vertex, so with probability 1 a cut vertex $w$ will be found by ${\tt sCentral}([n])$. Let $C_1,\ldots, C_m$ be the connected components of $G\setminus \{w\}$. The sets $V_i$ produced by this call of ${\tt sCentral}([n])$ correspond to the sets $C_1\cup \{w\},\ldots, C_m\cup \{w\}$. This is clear by Lemma~\ref{mntool3}: a vertex $u$ belongs to the same connected component as $v$ in $G\setminus \{w\}$ if and only if $w$ does not separate $u$ and $v$ in $G$. Note also that for any $A,B\subset V_i$ any minimal separator of $A$ and $B$ is contained in $V_i$. In particular, by Theorem~\ref{th:STDrank},  
\begin{enumerate}
	\item[(i)] For every $V_i$, the edge-set of $G[V_i]$ is the same as the edge-set of the graph of the marginal distribution, $\mathcal G(\Sigma _{V_i})$.
\end{enumerate}
By induction, statement (i) holds for every call of {\tt ReconstructSB}.
\end{proof}

\begin{theorem}
\label{mmain2}
Let $G=([n],E)$ be a graph with maximum degree of the block-cut tree bounded by $d$, and let $b$ be the size of the largest 2-connected component. Fix $\epsilon <1$ and define $\kappa=\lceil32 d^2\log\!\left( \frac{2dn}{\epsilon } \right)\rceil$ to be the parameter of Algorithm~\ref{s-algo}. If $\Sigma\in \mathcal M(G)$ and $\Sigma$ is generic, then, with probability at least $1-\epsilon $, Algorithm~\ref{tlmain} runs with query and time complexity of order
\[\mathcal{O}\bigg(  d^3n\log(n) \bigg(\log\left(\frac{n}{\epsilon}\right)+b^2\bigg)\bigg) \quad \text{and} \quad \mathcal{O}\bigg(  d^3n\log(n)\bigg(\log\left(\frac{n}{\epsilon}\right)+db^3\bigg)\bigg)\]
\end{theorem}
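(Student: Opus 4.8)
Correctness was settled in the preceding proposition, so only the complexity claim is left, and the plan is to reproduce the argument used for Theorem~\ref{mmain}, adjusting it to the two features that set ${\tt ReconstructSB}$ apart from ${\tt ReconstructTree}$: the centrality guarantee now has to come from Corollary~\ref{cor:big2} (hence ultimately from Lemma~\ref{newc}) rather than from Corollary~\ref{cor:big}, and ${\tt ComponentsSB}$ returns the sets $C\cup\{w\}$, $C\in\cC^w$, instead of the $C$'s alone, so the vertex sets appearing on one recursion level are no longer pairwise disjoint. I would first condition on the event $\mathcal E$ that in every executed call ${\tt ReconstructSB}(V)$ with $|V|>db$ the vertex $\hat w={\tt sCentral}(V)$ satisfies $c(\hat w)<\alpha_d:=\sqrt{(4d-1)/(4d)}<1$. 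By Corollary~\ref{cor:big2} a single such call fails with probability at most $2|V|\exp(-\kappa/(32d^2))$, and a union bound over the $\mathcal O(n)$ recursive calls (see the next paragraph for why there are this many), together with the stated value of $\kappa$, will give $\mathbb P(\mathcal E)\ge 1-\epsilon$; the mild circularity — the number of calls depends on $\mathcal E$ — is dealt with in the usual way, by running the argument against the version of the algorithm that aborts after $n$ calls to ${\tt sCentral}$. Everything below is on $\mathcal E$.

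The core of the argument will be a structural description of the recursion tree, and the key point is that the algorithm never splits a $2$-connected block of $G$: at any call ${\tt ReconstructSB}(V)$ and any block $B$ of $G$, either $B\subseteq V$ or $|B\cap V|\le 1$. Indeed, when a call splits on the cut vertex $w$, the connected set $B\setminus\{w\}$ lies in a single component $C\in\cC^w$; if $w\notin B$ this forces $B\subseteq C$, and if $w\in B$ then, since $w$ is appended to every returned set, $B\subseteq C\cup\{w\}$, while $B$ meets every other returned set in at most $\{w\}$. Two consequences: each block lands in a \emph{unique} leaf of the recursion, and (conversely) each leaf set $C\cup\{w\}$ contains a whole block, namely the block through an edge joining $w$ to one of its neighbours in $C$; hence the number of leaves is at most the number of blocks of $G$, i.e.\ at most $n-1$, and since each internal call splits into at least two parts (because $c(\hat w)<1$) the total number of calls is $\mathcal O(n)$ and the recursion terminates. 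The same observation shows that the blocks of any $G[V]$ occurring in the recursion are among the blocks of $G$, so the number of components $|\cC^w|$ of $G[V]\setminus\{w\}$ — which equals the degree of $w$ in the block–cut tree of $G[V]$ — is at most $d$. Finally, on one recursion level the returned sets overlap only in cut vertices, so their sizes sum to at most $n$ plus the number of leaves, i.e.\ $\mathcal O(n)$; and because each split replaces $V$ by sets of size at most $\alpha_d(|V|-1)+1\le(1+\alpha_d)|V|/2$ (here $|V|>db\ge 2$), with $1-(1+\alpha_d)/2=\Theta(1/d)$, the recursion depth is $\mathcal O(d\log n)$.

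With these facts in hand the cost accounting is routine. In a single call, ${\tt sCentral}(V)$ costs $\mathcal O(|V|\kappa)$ by Proposition~\ref{Recovery}, and the main loop of ${\tt ComponentsSB}$ performs, for each of the at most $|V|$ vertices, at most $|N|\le|\cC^w|\le d$ rank tests $\det(\Sigma_{uw,vw})\ne 0$, each at $\mathcal O(1)$ cost; so an internal call costs $\mathcal O(|V|(\kappa+d))$ in time and in queries. Summing over all internal calls via the per-level size bound and the depth bound gives $\mathcal O\big((\kappa+d)\,dn\log n\big)$. A leaf inverts $\Sigma_V$ with $|V|\le db$, at cost $\mathcal O(|V|^3)$ time and $\mathcal O(|V|^2)$ queries; using $|V|^3\le(db)^2|V|$ and $|V|^2\le(db)|V|$ and summing over the leaves of each level (total size $\mathcal O(n)$) and over all $\mathcal O(d\log n)$ levels contributes the terms polynomial in $b$, bounded by the stated expressions. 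Substituting $\kappa=\mathcal O\big(d^2\log(dn/\epsilon)\big)=\mathcal O\big(d^2\log(n/\epsilon)\big)$ (for $d$ at most polynomial in $n$) collapses the time bound to $\mathcal O\big(d^3n\log n(\log(n/\epsilon)+b^2)\big)$ and the query bound to the claimed form.

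The step I expect to be the genuine obstacle is the recursion-tree analysis in the second paragraph. Because ${\tt ComponentsSB}$ copies the cut vertex into every child, one cannot simply invoke disjointness of the level sets as in the tree case, and simultaneously controlling the depth, the number of leaves, and the per-level total size really does seem to hinge on the observation that blocks are preserved intact. The second delicate point is tracking the power of $d$, which enters in four places at once — as (a bound on) the branching factor, as the number of components obtained by deleting a cut vertex, through the centrality gap $1-\alpha_d=\Theta(1/d)$ that governs the depth, and inside $\kappa$ — so a careless estimate in any one of them changes the final exponent.
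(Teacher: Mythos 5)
Your proof is correct, and it is \emph{not} just a restatement of the paper's argument: the key quantitative step differs, and your version is the one that actually closes the gap. The paper bounds the per-level vertex multiplicity by $d$ (``each $v$ is created at most $d$ times''), arriving at $\sum_i |U_i|\le nd$ per recursion level and $r\le nd$ sets. You instead argue via the structural observation that ${\tt ReconstructSB}$ never splits a $2$-connected block, from which you deduce that the number of leaves is at most the number of blocks ($\le n-1$), the branching factor of the recursion tree is at least $2$, and the per-level overcount (and the leaf-size total) is bounded by the number of leaves, giving the sharper $\sum_i|U_i|=\mathcal O(n)$ and at most $\mathcal O(n)$ calls overall. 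This tightening matters. Running the paper's own numbers — per-level cost $\mathcal O(nd\kappa + nd^3b^2)$, recursion depth $\mathcal O(d\log n)$ (since $1/\log(1/\alpha)=\Theta(d)$), and $\kappa=\mathcal O\big(d^2\log(n/\epsilon)\big)$ — gives $\mathcal O\big(d^4\,n\log n(\log(n/\epsilon)+b^2)\big)$ queries, one factor of $d$ more than the theorem asserts. With your $\mathcal O(n)$ per-level bound, the per-level cost drops to $\mathcal O(n\kappa + nd^2b^2)$ queries and $\mathcal O(n\kappa + nd^3 b^3)$ time, which across $\mathcal O(d\log n)$ levels yields exactly $\mathcal O\big(d^3n\log n(\log(n/\epsilon)+b^2)\big)$ and $\mathcal O\big(d^3n\log n(\log(n/\epsilon)+db^3)\big)$, matching the statement. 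In short: your block-preservation argument is the ingredient the paper's proof is missing, and without it (or some equivalent tightening) the stated $d^3$ exponent does not follow from the chain of bounds written in the paper. Two minor comments: (i) the inequality $\sum_i|U_i|\le n+\#\text{leaves}-1$ holds uniformly over levels because each internal node's split adds exactly $m_U-1$ to the running count and $\sum_U(m_U-1)=\#\text{leaves}-1$; you gesture at this but it is worth stating as the one-line telescoping bound. (ii) The union bound you invoke over $\mathcal O(n)$ calls together with per-call failure probability $2|V|\exp(-\kappa/(32d^2))\le 2n\exp(-\kappa/(32d^2))$ asks for $\kappa\gtrsim 32d^2\log(2n^2/\epsilon)$, which is off from the stated $\lceil 32d^2\log(2dn/\epsilon)\rceil$ by at most a constant factor; this is harmless for the $\mathcal O(\cdot)$ claim but should be acknowledged.
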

\begin{proof}
First we analyze the complexity of one call of ${\tt ComponentsSB}(V)$. If $|V|\leq  db$ then $\mathcal G(\Sigma_{V,V})$ is obtained. Since  matrix inversion takes at most cubic time, the time and queries required are $\mathcal{O}(d^3 b^3)$ and $\mathcal{O}(d^2b^2)$ respectively. 
If $|V|> db$ in ${\tt ReconstructSB}(V)$, then ${\tt ComponentsSB}(V)$ is called, which calls ${\tt sCentral}(V )$. By Proposition~\ref{Recovery}, the call of ${\tt sCentral}(V)$ takes time and queries of the order $\mathcal{O}\big(|V|\kappa\big)$. The latter provides the splitting vertex $w$  and then $V$ is split into at most $d$ sets $U_i=V_i\cup \{w\}$. This last step takes $\mathcal O(|V|d)$ queries and time. Hence, each call ${\tt ReconstructSB}(V)$ requires 
$$\mathcal{O}\bigg( |V|\kappa+|V|d+d^2b^2\bigg) \quad \text{ and}\quad  \mathcal{O}\bigg( |V|\kappa+|V|d+d^3b^3\bigg) $$
queries and time, respectively.

Let $U_{1},\dots,U_{r}$ be the sets on which the algorithm recurses on the $i$-th level of the recursion tree. By construction, these sets are not disjoint and, for each $v\in V$, at most $d$ copies of it are created during the algorithm. Hence, in each recursion level there are at most $nd$ vertices, implying that $\sum_{i=1}^r |U_{i}|\leq nd$ and that $r\leq nd$. 
Using the complexity bounds for a single call of ${\tt ReconstructSB}$, we get that any recursion level in the recursion tree requires
$$\mathcal{O}\bigg( nd\kappa+nd^3b^2\bigg) \quad \text{ and}\quad  \mathcal{O}\bigg( nd\kappa+nd^4b^3\bigg) $$
queries and time, respectively. 

Assume first that $\hat v={\tt sCentral}(V)$ satisfies $c(\hat v)\leq \alpha:=\sqrt{4d-1}/\sqrt{4d}$ in each call. In this case the recursion depth is at most $\log _{1/\alpha}(n)$ and  overall the algorithm requires 
$$\mathcal{O}\bigg(\frac{dn\log (n)}{\log\left(1/\alpha\right)} \left(\kappa+d^2b^2\right)\bigg) \quad \text{ and}\quad  \mathcal{O}\bigg(\frac{dn\log (n)}{\log\left(1/\alpha \right)}\left( \kappa+d^3b^3\right)\bigg) $$
queries and time, respectively. Since $\kappa=\mathcal O(d^2 \log(n/\epsilon))$ and $1/\log(1/\alpha)\leq 20d$, we obtain the expressions in the statement of the theorem. 

It remains to show that with the given choice of $\kappa$, with probability at least $1-\epsilon$, we get that $c(\hat v)\leq \alpha$ in each call. By Corollary~\ref{cor:big2}, in a single call the probability that $c(\hat v)> \alpha$ is at most $2|V|\exp \left(- \frac{\kappa}{32d^2}\right)$. As {\tt ReconstructSB}$([n])$ runs, the procedure {\tt sCentral} is called at most $dn$ times. From the union bound, the probability that in at least one call $\hat v={\tt sCentral}(V)$ satisfies $c(\hat v)> \alpha$ is at most $2dn \exp (-\kappa/(32d^2))$. This is at most $\epsilon $ for the indicated value of $\kappa$.
\end{proof}

\subsection{Lower bounds}\label{sec:101}

In this section we show that the result of Theorem~\ref{mmain} is optimal up to logarithmic factors, 
in the sense that one cannot reconstruct trees with maximum degree $d$ with less than $\Omega(dn)$ 
covariance queries.

Let $\X$ be the class of $n\times n$ covariance matrices whose
concentration graph is a tree. We write $T(\Sigma)$ for the tree induced by 
$\Sigma \in \X$.
We also denote by $\X_d$ the class of covariance matrices whose concentration graph is a tree
of maximum degree bounded by $d$. In our construction we use the characterization of the class $\X$ given in Lemma~\ref{lem:prodrho}.

We first prove that any algorithm 
that recovers the correct tree (without any restriction on the maximum degree)
 needs to access the covariance oracle $\Omega(n^2)$ times. 

In order to 
formalize such a statement,  let $\cA_k$ be the class of all randomized adaptive algorithms that 
query the covariance oracle at most $k$ times (\textit{adaptive} means that the algorithms are allowed to dynamically choose their queries). 
An algorithm $A\in \cA_k$
outputs the tree $\cT(A)$. The probability of error of algorithm $A$ for $\Sigma\in \X$
is denoted by
\[
    P(A,\Sigma) = \PROB\left\{ \cT(A) \neq T(\Sigma) \right\}~,
\]
where the probability is with respect to the randomization of the algorithm $A$.
The quantity of interest is the \emph{minimax risk}
\[
   R(\cA_k,\X)= \inf_{A \in \cA_k}\sup_{\Sigma \in \X} P(A,\Sigma)~.
\]
$R(\cA_k,\X)$ expresses the worst-case probability of error of the best algorithm 
that takes at most $k$ covariance queries. 

Theorem \ref{mmain} implies that there exists a constant $c>0$ such that, for every $\epsilon >0$, we have $R(\cA_k,\X_d)\le \epsilon$
whenever $k> cn \log(n)\left(d+\log(n/\epsilon)\right)$. In this section we prove that this upper bound is tight
up to logarithmic factors. 

We start with the case $d=n-1$ (i.e., no restriction on the maximum degree) since this simpler case
already contains the  main ideas. The lower bound for $R(\cA_k,\X_d)$ follows by a simple adjustment.

\begin{theorem}
\label{thm:lowerstar}
For all $k\le \binom{n}{2}$,
\[
    R(\cA_k,\X) \ge \frac{1}{2} - \frac{k}{(n-1)^2}~.
\]
In particular,     $R(\cA_k,\X) \ge 1/2-o(1)$ whenever $k=o(n^2)$.
\end{theorem}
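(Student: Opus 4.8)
The plan is to use a standard information-theoretic (Le Cam / Assouad-style) argument built on a carefully chosen family of hard instances. First I would construct a large collection of covariance matrices in $\X$, all very close to a common ``reference'' tree, such that distinguishing them forces many queries. The natural choice is the star graph $K_{1,n-1}$ with center vertex $1$: fix correlations $\rho_{1j}=\rho$ for all leaves $j$, which by Lemma~\ref{lem:prodrho} determines a valid $\Sigma\in\X$. Now perturb this by picking one leaf $a$ and re-attaching it to another leaf $b$ instead of the center: set $\rho_{1a}'=\rho^2$ and $\rho_{ba}'=\rho$ (keeping all other entries). By Lemma~\ref{lem:prodrho} this again yields a valid tree covariance matrix, and crucially the new matrix $\Sigma_{ab}$ differs from the reference matrix $\Sigma_0$ \emph{only in the single entry} $\sigma_{1a}$ (which changes from $\rho$ to $\rho^2$; note $\sigma_{ba}$ also changes, so one must be a little careful---either two entries change, or one reparametrizes so exactly one does). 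So we get roughly $\binom{n-1}{2}\asymp (n-1)^2/2$ distinct trees in $\X$, pairwise distinguishable only by probing a small number of specific oracle locations.

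The core of the argument is then a reduction: an algorithm making at most $k$ queries cannot reliably identify which of these $\Theta(n^2)$ instances it faces. Concretely, I would run the adversary argument: present the algorithm with the reference instance $\Sigma_0$ (consistent answers for the star), let it make its $k$ adaptive queries, and observe that the set of query locations it touches has size $k$. After the algorithm commits to an output tree, count how many of the perturbed instances $\Sigma_{ab}$ are consistent with every answer the algorithm received --- i.e., how many $(a,b)$ pairs have their ``distinguishing entries'' entirely outside the $k$ queried locations. Since each perturbation is localized to essentially one entry indexed by the pair $(1,a)$ (and the structural change involves leaf $b$), a query to location $(i,j)$ can ``rule out'' only $O(1)$ perturbations --- or more carefully, the number of perturbations compatible with a fixed query set of size $k$ is at least $(n-1)^2 - (\text{something linear in }k)$. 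The algorithm's single output can be correct for at most one of these surviving instances, so a uniformly random choice among the surviving perturbations is misidentified with probability at least $1 - 1/(\text{number of survivors})$; averaging and rearranging gives $R(\cA_k,\X)\ge \tfrac12 - \tfrac{k}{(n-1)^2}$. The factor $\tfrac12$ versus $1$ comes from including $\Sigma_0$ itself as one of the hypotheses (a two-point-type mixture), or equivalently from being slightly generous in the counting; I'd use whichever bookkeeping cleanly lands the stated constant.

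The main obstacle, and the step I'd spend the most care on, is the localization claim: making precise that a single covariance query can eliminate only a bounded number of the perturbed instances, \emph{for an adaptive algorithm}. Adaptivity means the $k$ query locations are random and depend on earlier answers, so one must argue that on the reference instance $\Sigma_0$ the algorithm receives a fixed deterministic transcript (since $\Sigma_0$ is fixed), hence its query set is a fixed (possibly random, via internal coin flips) set of size $\le k$, and then condition on it. For each realized query set $Q$ with $|Q|=k$, the number of pairs $(a,b)$ whose perturbation is invisible to $Q$ is at least $\binom{n-1}{2}$ minus the pairs ``hit'' by $Q$; since the entry distinguishing $\Sigma_{ab}$ from $\Sigma_0$ is $(1,a)$, each element of $Q$ of the form $(1,a)$ kills at most $n-2$ pairs (all choices of $b$), which would only give a bound like $R\ge \tfrac12-\tfrac{k}{n-1}$ --- too weak. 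So I expect the actual hard family must be designed so that \emph{each} perturbation changes a \emph{distinct, well-spread} entry: e.g., index perturbations by an unordered pair $\{a,b\}$ of leaves and arrange that $\Sigma_{\{a,b\}}$ differs from $\Sigma_0$ in exactly the entry $\sigma_{ab}$ (swapping which leaf hangs off which), so that one query $(i,j)$ eliminates at most one perturbation, yielding the clean $k/(n-1)^2$ rate. Getting such a family to consist of genuine \emph{trees} (not just symmetric PD matrices) while keeping all pairwise-distinguishing modifications to single, distinct entries is the delicate combinatorial design task; once it's in place, the counting and the Le Cam step are routine.
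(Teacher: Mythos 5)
Your high-level approach is the same as the paper's: a Bayesian / adversarial lower bound built on a one-parameter family of star-like trees, each differing from a reference star in a single, distinct covariance entry, so that one query can rule out at most one alternative, and then a Le Cam-type averaging. You also correctly identify the precise design goal at the end (index perturbations by an unordered pair $\{a,b\}$ of leaves and ensure the perturbed matrix differs from the reference \emph{only} at $\sigma_{ab}$), and you correctly note that a naive version with perturbations living at $(1,a)$ would only give a rate $k/(n-1)$.

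The gap is that you do not actually supply the construction, and your preliminary attempt does not work. With a constant-correlation star ($\rho_{1j}=\rho$ for every leaf $j$), hanging leaf $a$ off leaf $b$ does not change just the one or two entries you list: by the product formula~(\ref{eq:rhoedge}), for every other leaf $m$, $\rho_{am}$ becomes $\rho_{ab}\rho_{b1}\rho_{1m}=\rho^3$ instead of $\rho^2$, so in fact $\Theta(n)$ entries change, and the family is nowhere near ``localized.'' The paper's resolution is to use \emph{random} leaf correlations. Put $V=\{0,1,\dots,n-1\}$, draw $U_1,\dots,U_{n-1}$ i.i.d.\ uniform on $[0,1]$, and for the reference star (center $0$) set $\sigma_{0,i}=U_i$ and $\sigma_{i,j}=U_iU_j$. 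To perturb at $\{I,J\}$ with, say, $U_I<U_J$, hang $I$ off $J$ with edge correlations $\rho_{0,J}=U_J$ and $\rho_{J,I}=U_I/U_J$. Then $\rho_{0,I}=U_J\cdot(U_I/U_J)=U_I$ and, for every other leaf $m$, $\rho_{m,I}=U_mU_J\cdot(U_I/U_J)=U_mU_I$, so \emph{all} entries except $\sigma_{I,J}$ are unchanged; $\sigma_{I,J}$ moves from $U_IU_J$ to $U_I/U_J$. This is exactly the single-entry localization you want, and it is the crux you left open. The paper finishes by flipping a fair Bernoulli $B$ between the star ($B=0$) and the $\{I,J\}$-perturbed tree ($B=1$) with $\{I,J\}$ uniform; conditional on not having queried $\sigma_{I,J}$, the algorithm's posterior on $B$ is uniform, giving the factor $\tfrac12$, and averaging over the $\binom{n-1}{2}$ choices of $\{I,J\}$ gives $R(\cA_k,\X)\ge \frac{\binom{n-1}{2}-k}{2\binom{n-1}{2}}$. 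Your adversarial counting version of the last step is fine and interchangeable; adaptivity is handled in either bookkeeping by observing that no queried entry other than $\sigma_{I,J}$ carries any information about $B$.
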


\begin{figure}\centering
\begin{tikzpicture}[scale=0.7, every node/.style={scale=0.7},level/.style={sibling distance=15mm/#1}]
  \node (a) at (-2,-3) { };
\node [circle,draw] (z){$0$}
    child {node [circle,draw] (b) {$1$}
    }
    child {node [circle,draw] (g) {$2$}
    }
    child {node [circle,draw] (g) {$3$}
    }
        child {node [circle,draw] (g) {$4$}
    }
            child {node [circle,draw] (g) {$5$}
    }; 
\end{tikzpicture}
\hspace{2cm}
\begin{tikzpicture}[scale=0.7, every node/.style={scale=0.7},level/.style={sibling distance=15mm/#1}]
\node [circle,draw] (z){$0$}
    child {node [circle,draw] (b) {$1$}
    }
    child {node [circle,draw] (g) {$3$}
    child {node [circle,draw] (f) {$2$}
    }
    }
        child {node [circle,draw] (g) {$4$}
    }
                child {node [circle,draw] (g) {$5$}
    }
;
\end{tikzpicture}
	\caption{Illustration of the construction in the proof of Theorem~\ref{thm:lowerstar} with $n=6$, $\{I,J\}=\{2,3\}$, and $B=0$ (left), $B=1$ (right).}\label{fig:lowerstar}
\end{figure}
\begin{proof}
In order to prove the lower bound, we define a probability distribution $\cD$ on the set $\X$
and write
\[
   R(\cA_k,\X) \ge \inf_{A \in \cA_k} \EXP_{\Sigma \sim \cD} P(A,\Sigma)~.
\]
Next we specify how a random symmetric matrix $\Sigma$, distributed according to $\cD$, is generated.
$\Sigma$ is defined by a collection of independent random variables: let $B$ be a Bernoulli random variable
with parameter $1/2$, let  $U_1,\ldots,U_{n-1}$
be independent random variables, uniformly distributed on $[0,1]$, and let $I,J$ be different indices
in $[n-1]$, uniformly distributed over all $(n-1)(n-2)$ such pairs.
Then
 $\Sigma=\Sigma(B,U_1,\ldots,U_{n-1},I,J)$ is defined as follows; c.f. Figure~\ref{fig:lowerstar}. (We index the $n$ columns and rows from $0$ to $n-1$.)

\noindent $\bullet$
$\Sigma_{i,i}=1$ for all $i=0,\ldots,n-1$.

\noindent $\bullet$
Regardless of $B,I,J$, we have $\Sigma_{0,i}=U_i$ for all $i=1,\ldots,n-1$.

\noindent $\bullet$
If $B=0$, then $\Sigma_{i,j}= U_iU_j$ for all $i,j\in \{1,\ldots,n-1\}$, $i\neq j$.
Note that in this case, by Lemma~\ref{lem:prodrho}, the concentration graph is a star with vertex $0$ as a center
(and therefore indeed $\Sigma \in \X$).

\noindent $\bullet$
If $B=1$, then $\Sigma_{i,j}= U_iU_j$ for all $i,j\in \{1,\ldots,n-1\}$ such that $i\neq j$
and $\{i,j\}\neq \{I,J\}$. Moreover, $\Sigma_{I,J}=\min(U_I,U_J)/\max(U_I,U_J)$.
In this case, again by Lemma~\ref{lem:prodrho}, the concentration graph is a tree in which vertex $0$ has degree $n-2$,
every vertex $i\notin \{I,J\}$ has degree $1$ and is attached to vertex $0$,
and vertices $0,I,J$ form a path such that, if $U_I<U_J$, then $J$ is the middle vertex
and if $U_I>U_J$, then $I$ is the middle vertex.

Clearly, regardless of what the algorithm $A$ is, it is unable to distinguish
between $\Sigma(0,U_1,\ldots,U_{n-1},I,J)$ and $\Sigma(1,U_1,\ldots,U_{n-1},I,J)$
before the entry $\Sigma_{I,J}$ is queried. (No other entry of $\Sigma$ provides
any information about $\Sigma_{I,J}$.) In other words, if $B$, $U_1,\ldots,U_{n-1}$ and $I,J$ are fixed and $\Sigma=\Sigma(B,U_1,\ldots,U_{n-1},I,J)$ then 
$$
P(A,\Sigma)\;\geq \; \frac{1}{2}\,\EXP\left(\IND_{ (I,J) \ \text{is not queried}}|(I,J)\right).
$$
 Thus, for any algorithm $A$,
\[
\EXP_{\Sigma \sim \cD} P(A,\Sigma) \;\ge\; \frac{1}{2\binom{n-1}{2}} \sum_{\{i,j\} \subset [n-1]: i\neq j} \EXP\left(\IND_{ (i,j) \ \text{is not queried}}|(I,J)=(i,j)\right)
\;\ge\; \frac{\binom{n-1}{2}-k}{2\binom{n-1}{2}} 
\]
by symmetry, proving the theorem.
\end{proof}

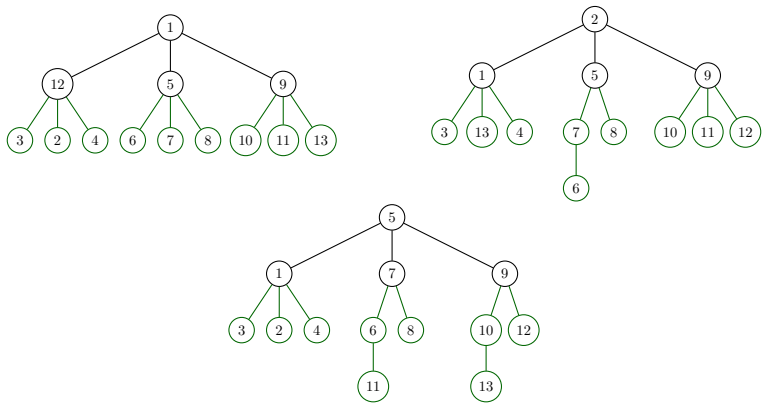
\begin{figure}\centering
\begin{tikzpicture}[scale=0.5, every node/.style={scale=0.5},level distance=1.5cm,
  level 1/.style={sibling distance=3cm},
  level 2/.style={sibling distance=1cm, draw=black!60!green}]
  \node (a) at (-2,-4.5) { };
\node [circle,draw] (z){$1$}
[sibling distance=9cm]
    child {node [circle,draw] (b) {$12$}
    child {node [circle,draw] (t) {$3$}
    }
    child {node [circle,draw] (w) {$2$}
    }
    child {node [circle,draw] (q) {$4$}
    }
    }
    child {node [circle,draw] (g) {$5$}
    child {node [circle,draw] (t) {$6$}
    }
    child {node [circle,draw] (w) {$7$}
    }
    child {node [circle,draw] (q) {$8$}
    }
    }
    child {node [circle,draw] (g) {$9$}
       child {node [circle,draw] (t) {$10$}
    }
    child {node [circle,draw] (w) {$11$}
    }
    child {node [circle,draw] (q) {$13$}
    }
    }
; 
\end{tikzpicture}
\hspace{1cm}
\begin{tikzpicture}[scale=0.5, every node/.style={scale=0.5},level distance=1.5cm,
  level 1/.style={sibling distance=3cm},
  level 2/.style={sibling distance=1cm, draw=black!60!green}]
  \node (a) at (-2,-3) { };
\node [circle,draw] (z){$2$}
[sibling distance=9cm]
    child {node [circle,draw] (b) {$1$}
    child {node [circle,draw] (t) {$3$}
    }
    child {node [circle,draw] (w) {$13$}
    }
    child {node [circle,draw] (q) {$4$}
    }
    }
    child {node [circle,draw] (g) {$5$}
    child {node [circle,draw] (w) {$7$}
        child {node [circle,draw] (t) {$6$}
    }
    }
    child {node [circle,draw] (q) {$8$}
    }
    }
    child {node [circle,draw] (g) {$9$}
       child {node [circle,draw] (t) {$10$}
    }
    child {node [circle,draw] (w) {$11$}
    }
    child {node [circle,draw] (q) {$12$}
    }
    }
; 
\end{tikzpicture}
\hspace{1cm}
\begin{tikzpicture}[scale=0.5, every node/.style={scale=0.5},level distance=1.5cm,
  level 1/.style={sibling distance=3cm},
  level 2/.style={sibling distance=1cm, draw=black!60!green}]
  \node (a) at (-2,-3) { };
\node [circle,draw] (z){$5$}
[sibling distance=9cm]
    child {node [circle,draw] (b) {$1$}
    child {node [circle,draw] (t) {$3$}
    }
    child {node [circle,draw] (w) {$2$}
    }
    child {node [circle,draw] (q) {$4$}
    }
    }
    child {node [circle,draw] (g) {$7$}
     child {node [circle,draw] (t) {$6$}
     child {node [circle,draw] (w) {$11$}}}
     child {node [circle,draw] (q) {$8$}}
    }
    child {node [circle,draw] (g) {$9$}
       child {node [circle,draw] (t) {$10$}
           child {node [circle,draw] (q) {$13$}
    }
    }
    child {node [circle,draw] (w) {$12$}
    }
    }
; 
\end{tikzpicture}
	\caption{Three trees made from a ternary tree, by moving at most one lower level leaf from its parent to one of its siblings.}\label{fig:ternary}
\end{figure}

For the class of covariance matrices $\X_d$ whose concentration graph is a tree with maximum degree
bounded by $d$, we have the following bound. Its proof is similar to that of Theorem \ref{thm:lowerstar}.
To avoid repetitions, we do not detail the proof. The only difference is that the class of trees that support
the distribution $\cD$ now includes the complete $d$-ary tree of height $h$ of $n=(d^{h+1}-1)/(d-1)$ vertices
and its modifications such that, in each $d$-ary branch at the leaf level, one can remove a leaf and
attach it to another one of the same branch (see Figure~\ref{fig:ternary} for such instances, made from a ternary tree).

\begin{theorem}\label{thm:lowerdary}
\[
    R(\cA_k,\X_d) \ge \frac{1}{2}\left(1-o(1)\right)
\]
whenever $k=o(nd)$.
\end{theorem}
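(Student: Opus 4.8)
The plan is to mimic the proof of Theorem~\ref{thm:lowerstar}, replacing the star graph by a complete $d$-ary tree of height $h$ together with its single-leaf perturbations. Concretely, I would fix $h$ so that $n=(d^{h+1}-1)/(d-1)$ and take as the support of the prior $\cD$ the set of trees obtained from the complete $d$-ary tree as follows: pick one of the $d^{h-1}$ internal nodes at the second-to-last level (each such node has $d$ leaf children), pick one of its $d$ leaf children and reattach it to one of its $d-1$ siblings, or leave the tree unperturbed. As in the star case, I would encode the tree by a Bernoulli bit $B$ (perturbed vs. not), an index pair $(I,J)$ (which leaf moves under which sibling) chosen uniformly, and a vector $U_1,\dots,U_{n-1}$ of independent uniform$[0,1]$ variables that supply the ``edge correlations'' so that, via Lemma~\ref{lem:prodrho}, $\Sigma$ lies in $\X_d$ for every outcome. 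Note all these trees have maximum degree at most $\max\{d+1,\dots\}$; one chooses the branching parameter of the $d$-ary tree a constant smaller than $d$ so the perturbed vertex (which gains one child) still has degree $\le d$.

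The core information-theoretic step is unchanged: conditioned on $U_1,\dots,U_{n-1}$ and on $(I,J)=(i,j)$, the two matrices $\Sigma(0,\dots)$ and $\Sigma(1,\dots)$ agree on every entry except $\Sigma_{i,j}$ (the covariance between the moved leaf and its prospective new parent), and no other queried entry reveals anything about that one entry — this is exactly the ``no other entry provides information'' argument, which goes through here because in both trees all other pairwise correlations are products of the same $U$'s along the same paths. Hence for fixed data $P(A,\Sigma)\ge \tfrac12\,\EXP(\IND_{(i,j)\text{ not queried}}\mid (I,J)=(i,j))$, and averaging over the uniform choice of $(I,J)$ among the $\Theta(n d)$ admissible pairs gives
\[
\EXP_{\Sigma\sim\cD}P(A,\Sigma)\;\ge\;\frac12\cdot\frac{M-k}{M},
\qquad M=\Theta(nd),
\]
where $M$ is the number of admissible $(i,j)$ pairs. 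So $R(\cA_k,\X_d)\ge \tfrac12(1-k/M)=\tfrac12(1-o(1))$ whenever $k=o(nd)$, which is the claim. The count $M$: there are $d^{h-1}=\Theta(n/d)$ penultimate-level internal nodes, each contributing $d(d-1)=\Theta(d^2)$ ordered ``(leaf, new parent)'' pairs, so $M=\Theta(nd)$.

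I expect the only genuine subtlety — and the reason the authors say ``similar to Theorem~\ref{thm:lowerstar}, we do not detail the proof'' — to be bookkeeping rather than a real obstacle: one must check that every perturbed tree is genuinely a distinct tree (so that recovery really does require distinguishing them), that each has maximum degree $\le d$ (hence the constant-shaving of the branching factor), and that the ``only $\Sigma_{I,J}$ differs'' claim is exactly right, i.e. that moving the leaf changes precisely the one pair of correlations and nothing else — which follows from Lemma~\ref{lem:prodrho} since every other pair's path in the tree is unaffected by the move. A minor point is the direction/orientation of the moved leaf (as in the $U_I<U_J$ vs.\ $U_I>U_J$ discussion of Theorem~\ref{thm:lowerstar}), but this does not affect indistinguishability before $\Sigma_{I,J}$ is seen, so it can be suppressed. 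No step here requires an idea beyond what the star-graph argument already supplies.
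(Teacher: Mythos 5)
Your proposal is correct and essentially reproduces the proof that the paper declines to spell out: a complete tree plus single-leaf perturbations (using branching factor $d-1$ rather than $d$ so internal nodes have degree $d$, not $d+1$), the Bernoulli/uniform encoding so that only $\Sigma_{I,J}$ distinguishes the perturbed from unperturbed covariance, and a $\Theta(nd)$ count of admissible pairs yielding $R \ge \tfrac{1}{2}\bigl(1-k/\Theta(nd)\bigr)$. Two small imprecisions, neither fatal: the constant must be shaved because the internal nodes of the base $d$-ary tree already have degree $d+1$ (not because the vertex gaining a child is at risk — that vertex was a leaf and goes from degree $1$ to $2$), and the claim that ``every other pair's path in the tree is unaffected'' fails for pairs $(I,v)$ with $v\neq J$, whose paths now detour through $J$; those correlations are preserved not because the paths are unchanged but because the new edge correlation $\rho_{IJ}$ is set to the quotient $\min/\max$, exactly the $U_I<U_J$ bookkeeping of Theorem~\ref{thm:lowerstar}.
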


\section{Separators in bounded treewidth graphs}\label{sec:boundedTWsep}

In the next two sections we deal with the main results of the paper. We show that a large and important 
class of sparse concentration graphs can be reconstructed efficiently with $O(n\polylog n)$ correlation queries. The class
includes all graphs with bounded treewidth and bounded maximum degree.

The algorithm we propose is a divide-and-conquer
algorithm. The main idea is that, once one finds a small set of vertices (a separator) whose removal decomposes
the graph into small connected components, and these components are identified, one may recurse in these components.
The nontrivial task is to find such separators efficiently. 

Our algorithm starts by taking a random sample $W$ of the vertices, of size proportional to the treewidth of $G$. Then we find a separator of $W$ of size at most $k+1$ that splits the vertices of $W$ into two sets of comparable size. We argue that, with high probability, such a separator exists. We also prove that this separator is a balanced separator of the entire vertex set. Removal of this separator decomposes the graph into connected components of significantly reduced size. We identify these components using a  linear number of queries. Then the algorithm recurses into each of the components. In this section we discuss the first splitting of the graph. How to subsequently recurse into the smaller subsets is described in detail in Section~\ref{sec:boundedTW}.

A \emph{tree decomposition} of a graph $G=(V, E)$ is a tree $T$ with vertices $B_1,\ldots,B_m$ where $B_i\subseteq V$ satisfy
\begin{enumerate}
\item The union of all sets $B_i$ equals V. 
\item If $B_i$ and $B_j$ both contain $v$, then all vertices $B_k$ of $T$ in the unique path between $B_i$ and $B_j$ contain $v$ as well. 
\item For every edge $uv$ in $G$, there is $B_i$ that contains both $u$ and $v$. 
\end{enumerate}
The \emph{width} of a tree decomposition is the size of its largest set $B_i$ minus one. The \emph{treewidth} of a graph $G$, denoted $\tw(G)$, is the minimum width among all possible tree decompositions of $G$. 

The key property of bounded-treewidth graphs is that they have small ``balanced'' separators. 
To define a balanced separator, we generalize the notion of centrality (\ref{eq:cv}) for any set $S\subset V$ by writing
\begin{equation}\label{eq:cv2}
c(S)\;=\;\frac{1}{|V\setminus S|}\max_{C\in \cC^S}|C|~,
\end{equation}
where recall that $\cC^S$ is the set of connected components of the graph induced by $V\setminus S$.
We say that a separator $S$ is \emph{balanced} when $c(S)\leq 0.93$. We start by noting that every graph with bounded treewidth has a small balanced separator, see, for example, \cite[Theorem 19]{bodlaender1998partial}.
 \begin{proposition}\label{treewidth}
If $\tw(G)\leq k$ then $G$ has a separator $S$ such that $|S|\leq k+1$ and $c(S)\leq \frac{1}{2}\frac{|V|-k}{|V|-(k+1)}$. 
\end{proposition}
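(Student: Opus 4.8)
The plan is to use a standard tree-decomposition argument: take an optimal tree decomposition $T$ of $G$ with bags of size at most $k+1$, and find a single bag $B_i$ that acts as a balanced separator. First I would recall the classical fact that removing a bag $B_i$ from the tree decomposition disconnects $T$ into subtrees, and the vertices of $G$ that appear only in bags of one such subtree form a union of connected components of $G\setminus B_i$; this follows from properties (2) and (3) of the tree decomposition (any edge of $G$ lives in some bag, and the "running intersection" property keeps separated pieces separated). So the connected components of $G\setminus B_i$ are each confined to the vertex set covered by one subtree hanging off $B_i$.

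Next I would run the usual centroid-type argument on $T$: orient each edge $B_iB_j$ of $T$ toward the side containing more vertices of $V$ (breaking ties arbitrarily), and pick a bag $B_{i^*}$ that is a "sink" or near-sink, i.e.\ such that every subtree hanging off $B_{i^*}$ covers at most half (roughly) of $V$. More precisely, one walks along $T$ always moving into the component with the majority of the uncovered vertices; this process terminates at a bag $B_{i^*}$ from which no component contains more than $\lceil |V\setminus B_{i^*}|/2\rceil$ of the vertices outside $B_{i^*}$ — here one must be slightly careful because vertices can appear in several bags, so "the vertices covered only by one subtree" is the right quantity to track, and the bound I would aim for is that each component $C\in\cC^{B_{i^*}}$ satisfies $|C|\le \frac{1}{2}(|V|-k)$ after accounting that $|B_{i^*}|\le k+1$ and the components partition $V\setminus B_{i^*}$ which has size $\ge |V|-(k+1)$. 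Setting $S=B_{i^*}$ then gives $|S|\le k+1$ and $c(S)=\frac{1}{|V\setminus S|}\max_C|C|\le \frac{1}{|V|-(k+1)}\cdot\frac{|V|-k}{2}$, which is the claimed bound.

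The main obstacle — really the only subtle point — is handling the overlap of bags carefully so that the "balanced in $T$" statement translates into "balanced separator in $G$" with the precise constants $\frac{1}{2}\frac{|V|-k}{|V|-(k+1)}$. The clean way is: for each edge $e$ of $T$, let the two sides induce a partition of $V$ into $V_1^e \cup B(e) \cup V_2^e$ where $B(e)$ are the vertices appearing in bags on both sides (which, by property (2), all lie in the two bags adjacent to $e$, hence $|B(e)|\le k+1$); orient $e$ toward the larger of $|V_1^e|, |V_2^e|$. A sink bag $B_{i^*}$ then has every neighbouring subtree contributing a component-union of size at most $\lfloor |V|/2 \rfloor$; since each actual connected component of $G\setminus B_{i^*}$ sits inside one such subtree's private vertices, and these are a subset of $V\setminus B_{i^*}$, the counting gives $\max_C |C| \le \frac{|V|-k}{2}$ (using $|B_{i^*}|\ge$ the right lower bound via the size-$\le k+1$ bags). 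I would then just divide by $|V\setminus S| \ge |V|-(k+1)$. Existence of a sink is immediate because $T$ is a finite tree and the orientation has no directed cycle (a directed cycle would force a bag strictly larger on "both sides" of itself, a contradiction), so following the arrows terminates.

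Finally, I would double-check the edge cases: if $G$ itself has at most $k+1$ vertices the statement is vacuous or trivial (take $S=V$, no components), and the genericity/connectedness assumptions are not needed here since this is a purely graph-theoretic statement about $G$. This is why the proposition can simply be cited from \cite[Theorem 19]{bodlaender1998partial} as the authors do; my reconstruction above is the standard proof behind that citation.
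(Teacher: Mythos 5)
Your identification of the source (Bodlaender's Theorem~19) and the overall centroid-on-the-tree-decomposition strategy are correct, and your observation that the statement is purely graph-theoretic (no genericity needed) is right. However, there is a real gap at the crucial last step: the orientation argument you describe on an \emph{arbitrary} width-$k$ tree decomposition only yields $\max_{C\in\cC^{S}}|C|\le |V|/2$, not the sharper bound $\max_{C}|C|\le \tfrac{1}{2}(|V|-k)$ that the proposition requires. Concretely, if you write $V = V_1^e \sqcup B(e)\sqcup V_2^e$ for an edge $e=tt'$ of the decomposition tree and orient toward the larger of $|V_1^e|,|V_2^e|$, then a sink bag $B^*$ only gives $|V_j^e|\le \tfrac{1}{2}\bigl(|V|-|B(e)|\bigr)$ for each adjacent subtree, and since $|B(e)|=|B^*\cap B_{t'}|$ can be as small as $0$ in a generic tree decomposition (nothing forces adjacent bags to intersect), you cannot conclude $|V_j^e|\le \tfrac{1}{2}(|V|-k)$. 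Your parenthetical ``using $|B_{i^*}|\ge$ the right lower bound'' points at the issue but is not the right fix: what bounds the component size is $|B^*\cap B_{t'}|$, not $|B^*|$ itself, and only an upper bound $|B^*|\le k+1$ is available anyway.

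The missing ingredient is to first pass to a \emph{smooth} tree decomposition of width $k$: one in which every bag has size exactly $k+1$ and every pair of adjacent bags shares exactly $k$ vertices. Every graph of treewidth $\le k$ admits such a decomposition (pad small bags up to size $k+1$ and subdivide edges whose endpoint bags differ in more than one vertex, inserting interpolating bags; this is standard, see e.g.\ Bodlaender's survey). Once the decomposition is smooth, for every edge $e=tt'$ one has $|V_1^e|+|V_2^e|=|V|-|B_t\cap B_{t'}|=|V|-k$, so the two private sides cannot both exceed $\tfrac12(|V|-k)$. Orienting each edge toward the larger private side and taking a sink $B^*$ then gives, for every subtree $T'$ hanging off $B^*$, $\lvert V_{T'}\setminus B^*\rvert\le\tfrac12(|V|-k)$; since each connected component of $G\setminus B^*$ lives inside one such set, $\max_C|C|\le\tfrac12(|V|-k)$. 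Combining this with $|V\setminus B^*|\ge|V|-(k+1)$ yields $c(B^*)\le \tfrac{1}{2}\,\tfrac{|V|-k}{|V|-(k+1)}$, which is the stated bound. Without the smoothing step the argument produces $c(S)\le \tfrac{|V|/2}{|V|-(k+1)}$, which is strictly weaker than the proposition's conclusion for every $k\ge 1$, and in particular is not enough for the paper's downstream use (the bound by $11/20$ under $|V|\ge 6(k+1)$ in Proposition~\ref{prop:ifdsample} would fail).
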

\begin{remark}
By \cite[Lemma~11]{bodlaender1998partial}, if $\tw(G)\leq k$ then $\tw(H)\leq k$ for every subgraph $H$ of $G$. In particular, one can recursively split a graph into subgraphs of small treewidth using small balanced separators. 	\end{remark}

\subsection{Finding a separator of two sets}\label{sec:ABsep}

Let $G=(V,E)$ be a graph and let $\Sigma\in \mathcal M(G)$ be generic. We give an algorithm that finds a minimal separator of two subsets $A,B\subset V$. By Assumption~\ref{mntool}, the size of such a minimal separator is $r:={\rm rank}(\Sigma_{A,B})$. Denote by $\cS(A,B)$ the set of all minimal separators of $A$ and $B$ in $G$. 
Denote by $U$ the set of all vertices that lie in some minimal separator in $\cS(A,B)$.

\begin{lemma}\label{lem:findesep}
	A vertex $v\in V$ lies in $U$ if and only if ${\rm rank}(\Sigma_{Av,Bv})=r$.  \end{lemma}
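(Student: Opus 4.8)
The plan is to recognize the claim as the special case $C=\{v\}$ of Lemma~\ref{mntool3}, and to spell out the short argument behind it for completeness. Since $\Sigma_{A,B}$ is a submatrix of $\Sigma_{Av,Bv}$, one always has $\mathrm{rank}(\Sigma_{Av,Bv})\ge\mathrm{rank}(\Sigma_{A,B})=r$, so the condition $\mathrm{rank}(\Sigma_{Av,Bv})=r$ is equivalent to $\mathrm{rank}(\Sigma_{Av,Bv})\le r$. By Assumption~\ref{mntool} (the genericity identity of Theorem~\ref{th:STDrank}), $\mathrm{rank}(\Sigma_{Av,Bv})$ equals the minimum size of a separator of $A\cup\{v\}$ and $B\cup\{v\}$ in $G$. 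Thus it suffices to show that $G$ has a separator of $A\cup\{v\}$ and $B\cup\{v\}$ of size at most $r$ if and only if $v\in U$.

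For the ``if'' direction, suppose $v\in U$ and pick a minimal separator $S\in\cS(A,B)$ with $v\in S$; then $|S|=r$. Every path between $A\cup\{v\}$ and $B\cup\{v\}$ is either a path having $v$ as an endpoint, hence meets $S$ at $v$, or a path between $A$ and $B$, hence meets $S$ because $S$ separates $A$ and $B$. So $S$ separates $A\cup\{v\}$ and $B\cup\{v\}$ and has size $r$. For the ``only if'' direction, suppose $S'$ is a separator of $A\cup\{v\}$ and $B\cup\{v\}$ with $|S'|\le r$. Any such $S'$ is in particular a separator of $A$ and $B$, hence $|S'|\ge r$ by the minimality encoded in $r=\mathrm{rank}(\Sigma_{A,B})$; therefore $|S'|=r$ and $S'\in\cS(A,B)$. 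Moreover $v$ lies in $(A\cup\{v\})\cap(B\cup\{v\})$, and by the convention recalled after the definition of a separator, the intersection of the two sets is contained in every separator of them; this forces $v\in S'$. Hence $v$ lies in the minimal separator $S'\in\cS(A,B)$, i.e. $v\in U$.

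I do not anticipate a genuine obstacle: once Lemma~\ref{mntool3} (equivalently, Assumption~\ref{mntool}) is available, the statement is essentially a bookkeeping exercise. The only points needing a moment's care are the two elementary graph facts used above---that a separator of $A\cup\{v\}$ and $B\cup\{v\}$ of size $r$ is automatically a minimal separator of $A$ and $B$, and that such a separator must contain $v$---and the remark that, under genericity, $\mathcal G(\Sigma)=G$, so the separators counted by Theorem~\ref{th:STDrank} and those defining $\cS(A,B)$ are the same.
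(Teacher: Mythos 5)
Your proposal is correct and takes essentially the same route as the paper: the paper's own proof is a one-line appeal to Lemma~\ref{mntool3} with $C=\{v\}$, and you recognize this specialization and then unpack the argument behind it (which is in effect the proof of Lemma~\ref{mntool3} spelled out for this case). Both directions you give are sound, including the use of the paper's convention that the intersection of the two sets must lie in every separator.
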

\begin{proof}
	This follows immediately from Lemma~\ref{mntool3}. \end{proof}

Lemma~\ref{lem:findesep} together with Lemma~\ref{mntool3} give a simple and efficient procedure to find an element in $\cS(A,B)$, detailed in Algorithm \ref{alg:sep0}.  
\begin{algorithm}
\label{alg:sep0}
$U \gets \emptyset$\;
$r = {\rm rank}(\Sigma_{A,B})$\;
\ForAll{$v\in V$}{\If{${\rm rank}(\Sigma_{Av,Bv})=r$}{$U\gets U\cup \{v\}$\;}}
$C\gets \{v_0\}$ for some $v_0\in U$\;
\ForAll{$u\in U\setminus \{v_0\}$}{\uIf{${\rm rank}(\Sigma_{ACu,BCu})=r$}{
    $C\gets C\cup \{u\}$ \;
  } }
    \Return{ $C$\;}
\caption{${\tt ABSeparator}(A,B)$}
\end{algorithm}
\begin{proposition}\label{prop:ABSep}Let $G=(V,E)$ be a graph and let $\Sigma\in \mathcal M(G)$ be generic. For any $A,B\subset V$ with $M=\max\{|A|,|B|\}$,  Algorithm~\ref{alg:sep0} finds a minimal separator of $A$ and $B$ with query complexity $\mathcal O(|V|M^2)$ and  computational complexity $\mathcal O(|V|M^3)$.
\end{proposition}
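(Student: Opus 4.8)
The plan is to split the claim into a correctness part and a complexity part, and to prove correctness through a monotone greedy invariant. For correctness I would argue as follows. By Lemma~\ref{lem:findesep}, the set $U$ assembled in the first loop is exactly the set of vertices lying in some minimal separator of $A$ and $B$; in particular $U\neq\emptyset$ as soon as $r:={\rm rank}(\Sigma_{A,B})\geq 1$, which is the only interesting case (if $r=0$ then $\Sigma_{A,B}=0$ and $\emptyset$ already separates $A$ and $B$, which does not happen for nonempty $A,B$ under the standing assumption that $\mathcal G(\Sigma)$ is connected), so picking $v_0\in U$ is legitimate. For the greedy loop I would maintain the invariant that the current set $C$ is contained in some minimal separator of $A$ and $B$. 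It holds at initialization since $v_0\in U$. For the inductive step, Lemma~\ref{mntool3} applied to the pair $A,B$ and the set $C\cup\{u\}$ says precisely that ${\rm rank}(\Sigma_{ACu,BCu})={\rm rank}(\Sigma_{A,B})=r$ if and only if $C\cup\{u\}$ is contained in a minimal separator of $A$ and $B$; hence the algorithm's test extends $C$ exactly when the invariant is preserved, so the invariant holds at termination.

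It remains to show that the returned $C$ is a \emph{full} minimal separator, i.e.\ $|C|=r$; this is the step I expect to require the most care. Suppose $|C|<r$. By the invariant and Assumption~\ref{mntool} there is a minimal separator $S\supseteq C$ with $|S|=r$, hence a vertex $u^\star\in S\setminus C$; moreover $u^\star\in U$ because it lies in the minimal separator $S$. Since $u^\star\neq v_0$ (as $v_0\in C$), $u^\star$ was examined at some iteration of the greedy loop; at that moment the current set $C'$ satisfied $C'\subseteq C\subseteq S$ by monotonicity, so $C'\cup\{u^\star\}\subseteq S$ is contained in a minimal separator and, by Lemma~\ref{mntool3}, the test succeeded and $u^\star$ was added to $C$ --- contradicting $u^\star\notin C$. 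Hence $|C|=r$ and $C\in\cS(A,B)$. The crux here is exactly that (i) every vertex that could legally extend $C$ already lies in $U$ and was therefore inspected, and (ii) monotonicity of $C$ means a legality test passed at an earlier stage cannot later be invalidated; both are consequences of the ``subset of a minimal separator'' characterization in Lemma~\ref{mntool3}, which in turn rests on genericity (Assumption~\ref{mntool}).

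For the complexity bounds, the key observation is that every rank computation performed by the algorithm is on a submatrix $\Sigma_{A',B'}$ of $\Sigma$ whose index sets both have size $\mathcal O(M)$: indeed $r={\rm rank}(\Sigma_{A,B})\leq\min\{|A|,|B|\}\leq M$, the invariant gives $|C|\leq r\leq M$, and the matrices that occur are $\Sigma_{A,B}$, the $\Sigma_{Av,Bv}$, and the $\Sigma_{ACu,BCu}$. Each such submatrix has $\mathcal O(M^2)$ entries and its rank can be computed by Gaussian elimination in $\mathcal O(M^3)$ arithmetic operations. Both loops run at most $|V|$ times, so the total query complexity is $\mathcal O(|V|M^2)$ and the computational complexity is $\mathcal O(|V|M^3)$, as claimed. (The query bound is in fact loose, since the blocks $\Sigma_{A,B}$, $\Sigma_{A,v}$, $\Sigma_{v,B}$ are queried only once, but $\mathcal O(|V|M^2)$ suffices for the statement.)
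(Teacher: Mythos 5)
Your proof is correct and follows essentially the same approach as the paper: the first loop is justified by Lemma~\ref{lem:findesep}, the greedy loop maintains the invariant ``$C$ is a subset of some minimal separator'' via Lemma~\ref{mntool3}, and the complexity follows from counting $\mathcal O(|V|)$ rank computations on $\mathcal O(M)\times\mathcal O(M)$ matrices. Your argument that the terminal $C$ has full size $r$---a proof by contradiction exploiting the monotone growth of $C$ (any $u^\star\in S\setminus C$ would have been added at the step where it was inspected, since then $C'\cup\{u^\star\}\subseteq S$)---is in fact a bit more explicit than the paper's phrasing (``applying this argument recursively''), which leaves unstated why a vertex of $S$ examined early cannot be skipped. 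This is a clean improvement in exposition, not a different method.
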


\begin{proof}
	By Lemma~\ref{lem:findesep}, the first loop finds the set $U$ of all vertices that lie in a minimal separator of $A$ and $B$. This loop has  $\mathcal O(|V|M^2)$ and $\mathcal O(|V|M^3)$ query and computational complexity, respectively. 
	
We now take an arbitrary vertex $v_0\in U$ and show that the second loop of the algorithm finds a minimal separator of $A$ and $B$ that contains $v_0$. Start with $C=\{v_0\}$ and note that, since $v_0\in U$, there exists $S\in \cS(A,B)$ containing $v_0$. In each iteration of the second loop we add  $u\in U\setminus \{v_0\}$ to $C$ if ${\rm rank}(\Sigma_{ACu,BCu})=r$. Therefore, by Lemma~\ref{mntool3}, we update $C\gets C\cup \{u\}$ if and only if there exists (not necessarily unique) $S\in \cS(A,B)$ such that $C\cup \{u\} \subseteq S$. By Assumption~\ref{mntool}, $|S|=r$ and so, if $|C\cup \{u\}|=r$ then $S=C\cup \{u\}$ and the rank condition will \emph{not} be satisfied for the subsequent vertices in the loop (showing correctness of the algorithm). If $|C\cup \{u\}|<r$ then $C\cup \{u\}$ is a strict subset of $S$ and all the remaining vertices in $S\setminus (C\cup \{u\})$ appear in the second loop \emph{after} $u$. Applying this argument recursively, we conclude correctness of the algorithm. 

Since $|U|\leq |V|$ and $r\leq M$, the number of queries and computational complexity of the second loop are $\mathcal O(|V|M^2)$ and $\mathcal O(|V|M^3)$, respectively, which concludes the proof.
	\end{proof}

\subsection{Balanced separators in $G$}\label{sec:vc}

In Section~\ref{sec:ABsep} we provided an efficient procedure that finds a separator for a given pair of sets $A,B\subset V$. In this section we show how to construct such a pair of small sets so that the obtained separator is, with high probability, a balanced separator for the entire graph $G$. 

Our approach to finding a balanced separator is to base the search on a random subset $W\subset V$ of size $m$ which can be handled within our computational budget. To argue why our randomization works and guide the choice of the parameter $m$ we rely on 
VC-theory initiated by \cite{VaCh71}. Let $\mathcal F_S$ be the set of all connected components in $\cC^S$ and their complements in $V\setminus S$. Write 
\begin{equation}\label{eq:Fk}
\mathcal F_k\;\;:=\;\;\bigcup_{S: |S|\leq k} \mathcal F_S~,	
\end{equation}
that is, $C\in \mathcal F_k$ if it is a connected component of $G\setminus S$ for some $S$ with at most $k$ elements, or $C$ is a union of all but one such components. 

\begin{definition}
A set $W \subseteq V$ is a \emph{$\delta$-sample} for $\mathcal F_k$ if for all sets $C\in \mathcal F_k$, 
\begin{equation}\label{eq:dsample}
\frac{|C|}{|V|}-\delta\;\leq\;\frac{|W\cap C|}{|W|}\;\leq\; \frac{|C|}{|V|}+\delta.  	
\end{equation}
\end{definition}

We now present conditions that assure that a uniformly random sample $W$ from the vertex set $V$ is a $\delta$-sample with high probability. A subset $W\subset V$ is \emph{shattered} by $\mathcal F_{k}$ if $ W\cap \mathcal F_{k}\;=\;\{W\cap C:\;C\in \mathcal F_{k}\}$ is the set of all subsets of $W$. Define the \emph{VC-dimension} of $\mathcal F_k$ , denoted by ${\rm VC}(\mathcal F_k)$, to be the maximal size of a subset shattered by $\mathcal F_{k}$. The following follows from the classical Vapnik-Chervonenkis inequality (see \cite{DeLu00} for 
a version that implies the constants shown here):
\begin{theorem}\label{netsample}
Suppose that ${\rm VC}(\mathcal F_k)= r$, $ \delta >0$, and $\tau \leq 1/2$. A set $W$ obtained by sampling $m$ vertices 
from $V$ uniformly at random, with replacement, 
 is a $\delta$-sample of $\mathcal F_{k}$ with probability at least $1-\tau$ if 
\begin{equation}\label{eq:boundW}
 m \;\geq\; \max\left(\frac{10r}{\delta ^2}\log\!\left(\frac{8r}{\delta ^2}\right), \frac{2}{\delta ^2}\log\!\left(\frac{2}{\tau}\right) \right)~.	
\end{equation}
\end{theorem}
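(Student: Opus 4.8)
The plan is to recognize Theorem~\ref{netsample} as a direct instance of the classical uniform law of large numbers for empirical measures over a VC class, applied to the uniform distribution on $V$ and to the set system $\mathcal F_k$ of \eqref{eq:Fk}. First I would reformulate the conclusion. Let $\mu$ be the uniform probability measure on $V$, so $\mu(C)=|C|/|V|$ for every $C$. Sampling $m$ vertices of $V$ uniformly at random with replacement produces i.i.d.\ points $X_1,\dots,X_m\sim\mu$, and if $\mu_m$ denotes their empirical measure then $\mu_m(C)=|W\cap C|/|W|$ (reading $W$ as a multiset of size $m$, so $|W|=m$ and $|W\cap C|$ counts multiplicities, i.e.\ $\mu_m(C)=\frac1m\sum_i\mathbbm 1[X_i\in C]$). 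By \eqref{eq:dsample}, $W$ fails to be a $\delta$-sample for $\mathcal F_k$ precisely on the event $\big\{\sup_{C\in\mathcal F_k}|\mu_m(C)-\mu(C)|>\delta\big\}$. Note that, by \eqref{eq:Fk}, $\mathcal F_k$ is genuinely a collection of subsets of $V$, so the VC machinery applies verbatim.

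The second step bounds this probability. Writing $s(\mathcal F_k,m)$ for the $m$-th shatter coefficient of $\mathcal F_k$, Sauer's lemma together with the hypothesis $\VC(\mathcal F_k)=r$ gives $s(\mathcal F_k,m)\le\sum_{i=0}^r\binom mi\le(em/r)^r$ for $m\ge r$. The Vapnik--Chervonenkis inequality, in a version as in \cite{DeLu00}, bounds $\mathbb P\big(\sup_{C\in\mathcal F_k}|\mu_m(C)-\mu(C)|>\delta\big)$ by an expression of the form $c_1\,s(\mathcal F_k,2m)\,e^{-c_2 m\delta^2}$, hence by $c_1(2em/r)^r e^{-c_2 m\delta^2}$.

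It then remains to choose $m$ so that this bound is at most $\tau$, which is where the two terms in the maximum in \eqref{eq:boundW} come from. I would split the exponent: the requirement $m\ge\frac{2}{\delta^2}\log(2/\tau)$ forces one factor $e^{-c_2 m\delta^2/2}$ to be at most $\tau/c_1$, while the requirement $m\ge\frac{10r}{\delta^2}\log(8r/\delta^2)$ forces the remaining $c_1(2em/r)^r e^{-c_2 m\delta^2/2}\le 1$. The latter estimate is the only delicate point: it is a self-consistency (fixed-point) computation in which the $\log m$ sitting inside $r\log(2em/r)$ must be absorbed into the linear term $c_2 m\delta^2/2$ by means of an elementary inequality such as $\log x\le x/e$, after which routine bookkeeping with the explicit constants closes the argument. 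There is no conceptual obstacle here—the statement is exactly the VC sampling theorem—so the only real work is this constant-chasing, which is precisely why the excerpt defers to \cite{DeLu00} for a version yielding the constants displayed in \eqref{eq:boundW}.
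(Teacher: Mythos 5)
Your proposal is correct and takes the same route the paper intends: the paper gives no proof of Theorem~\ref{netsample}, stating only that it ``follows from the classical Vapnik--Chervonenkis inequality'' and citing \cite{DeLu00} for the version yielding these constants, which is exactly the reduction (uniform measure on $V$, Sauer's lemma, split of the exponent to handle the two terms in the maximum) that you sketch. The one small point worth flagging is the multiset convention for $W$: since sampling is with replacement, treating $W$ as a multiset of size $m$ (as you do) is the right reading for the VC machinery to apply verbatim, and is consistent with the intended meaning of the $\delta$-sample condition.
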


A key property of the set $\mathcal{F}_k$ is that its VC-dimension is bounded by a linear function of the treewidth $k$. 
\begin{lemma}[\cite{feige2006finding}]\label{vc}
	Let $G=(V,E)$ be a graph and let $\mathcal F_k$ for $k\geq 1$ be the set defined in (\ref{eq:Fk}). Then ${\rm VC}(\mathcal F_{k})\leq 11\cdot k$. 
	\end{lemma}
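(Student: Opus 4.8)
The plan is to show that if a set $W \subseteq V$ with $|W| = s$ is shattered by $\mathcal{F}_k$, then $s \leq 11k$. Recall $\mathcal{F}_k = \bigcup_{S:|S|\leq k}\mathcal{F}_S$, where $\mathcal{F}_S$ consists of the connected components of $G\setminus S$ together with the complements (in $V\setminus S$) of each such component. First I would observe that shattering $W$ with $\mathcal{F}_k$ means that for \emph{every} subset $A \subseteq W$ there is a separator $S$ with $|S|\leq k$ and a set $C\in \mathcal{F}_S$ with $C\cap W = A$. In particular, taking $A = W$ forces, for the witnessing $S$, that $W\setminus S$ lies entirely in one member of $\mathcal{F}_S$; more usefully, taking singletons and pairs as the targets $A$ will be the source of the combinatorial constraint.

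The heart of the argument: fix the shattered set $W$, and for each pair of distinct vertices $u,v\in W$, consider a subset $A\subseteq W$ that contains $u$ but not $v$. The witnessing member of $\mathcal{F}_k$ is either a connected component $C$ of $G\setminus S$ with $u\in C$, $v\notin C$, or the complement of such a component; in either case $S$ separates $u$ from $v$ in $G$, \emph{unless} $v \in S$. So for each pair $\{u,v\}$, the separator realizing the split $\{u\}$ vs. $\{v\}$ either puts one of $u,v$ into $S$, or is a genuine $(u,v)$-separator of size $\leq k$ avoiding both. The strategy is to run a charging/counting argument: since $|S|\le k$ and there are $\binom{s}{2}$ pairs to separate, one shows that a single separator $S$ (with $|S|\le k$) can only be ``responsible'' for boundedly many of the required splits, which caps $s$ linearly in $k$. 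Concretely, I would count, over all $2^s$ subsets $A$ of $W$, how many distinct traces $C\cap W$ a fixed separator $S$ can produce: the components of $G\setminus S$ partition $W\setminus S$ into, say, $t$ blocks, yielding at most $2t + 2^{|S\cap W|}\cdot(\text{something})$ distinct traces — roughly, each $C\in\mathcal{F}_S$ gives a trace that is a union of some blocks restricted to $W$, plus an arbitrary subset of $S\cap W$. This bounds the number of traces from one $S$ by about $|S\cap W|+1$ times a small factor, times $2^{|S\cap W|}$.

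Then I would combine: the number of separators $S$ of size $\le k$ that are ``essentially different'' as far as $W$ is concerned is controlled by how $S$ meets $W$ and how it partitions $W\setminus S$. The cleanest route is the one in \cite{feige2006finding}: bound the number of distinct sets of the form $C\cap W$ directly. Each $S$ with $|S|\le k$ contributes at most $2(k+1) + 2^{|S\cap W|}$ or so traces, but since we only care about $S\cap W$ and the induced partition of $W\setminus S$, and a size-$k$ separator can split $W\setminus S$ into at most $k+1$ relevant pieces while $|S\cap W|\le k$, a careful accounting gives $|W\cap \mathcal{F}_k| \le 2^{c k}$ for an explicit constant, and shattering demands $2^s \le 2^{ck}$, hence $s\le ck$ with $c = 11$. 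The main obstacle is precisely pinning down this constant: one must argue that over all choices of $S$ the total number of distinct traces on $W$ is at most $2^{11k}$ — i.e., that the ``$(k+1)$ blocks from the partition'' and the ``subsets of $S\cap W$'' interact so that the exponent stays $11k$ rather than blowing up. I would handle this by noting that a trace $C\cap W$ of a component $C$ is determined by (i) which block of the partition of $W\setminus S$ it is, or its complement, giving a factor of about $2(k+1)$, times (ii) an arbitrary subset $W\cap S$, giving $2^{|W\cap S|}\le 2^k$; summing $2(k+1)2^k$ over the structure and being slightly more generous yields $2^{11k}$ for all $k\ge 1$. Comparing $2^s \le 2^{11k}$ finishes it.
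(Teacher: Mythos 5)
The paper does not actually prove this lemma; it is cited directly from Feige and Mahdian (2006), and the paper only adds a remark that their proof can be traced to give the constant $11$. So there is no paper proof to compare against, and your sketch must stand or fall on its own. It does not hold up as written, for three reasons.

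First, a member $C \in \mathcal F_S$ is a subset of $V\setminus S$ by construction (it is a connected component of $G\setminus S$ or the complement of one inside $V\setminus S$), so $C\cap S = \emptyset$ and the trace $C\cap W$ is always a subset of $W\setminus S$. Your repeated factor ``plus an arbitrary subset of $S\cap W$'' is therefore spurious: a single $S$ cannot produce traces that touch $S\cap W$ at all. Second, and more seriously, your claim that ``a size-$k$ separator can split $W\setminus S$ into at most $k+1$ relevant pieces'' is false in a general graph: removing even a single vertex can leave arbitrarily many components meeting $W$ (e.g.\ the center of a star), so the number of blocks of $W\setminus S$ is bounded only by $|W\setminus S|$, not by $k+1$. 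Third, even granting a per-separator trace bound, your plan requires that ``over all choices of $S$ the total number of distinct traces on $W$ is at most $2^{11k}$,'' and you never justify this; a priori there are on the order of $|V|^k$ separators of size $\le k$, and nothing in the sketch controls how many essentially different trace families they can induce on $W$. This is exactly the nontrivial content of the Feige--Mahdian argument, and the sketch effectively assumes the conclusion at this point. As a consequence the derivation of the constant $11$ is also unsupported. To make this rigorous you would need the actual structure of their proof, which bounds the number of realizable traces through a more careful combinatorial argument rather than a naive count over separators.
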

	\begin{remark}
		The statement of this lemma in \cite{feige2006finding} uses a universal constant. Their proof however allows one to specify this constant to be $11$.
	\end{remark}
The next result shows that a $\delta$-sample admits a balanced separator. 

\begin{proposition}\label{prop:ifdsample} Let $G=(V,E)$ be such that $\tw(G)\leq k$, and let $W\subset V$ be a $\delta$-sample of $\mathcal F_{k+1}$ satisfying $|W|\geq 6(k+1)$. If $\delta\leq \frac{1}{24}$ then we can partition $W$ into two sets $A,B$ such that $|A|,|B|\leq \frac{2|W|}{3}$ and a minimal separator $S$ of $A$ and $B$ has at most $k+1$ elements. Moreover, for any such partition,
$\max(|A\setminus S|,|B\setminus S|) \leq \frac{4}{5}|W\setminus S |$.
\end{proposition}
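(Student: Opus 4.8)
The plan is to obtain the partition required in the first claim from a \emph{balanced separator of the whole graph} $G$, using the $\delta$-sample property only to transfer mass estimates from $V$ to $W$, and then to get the second claim by an elementary count. Since $W\subseteq V$, we have $|V|\ge 6(k+1)$, so Proposition~\ref{treewidth} provides a separator $S_0$ of $G$ with $|S_0|\le k+1$ and $c(S_0)\le\frac12\cdot\frac{|V|-k}{|V|-(k+1)}\le\frac35$, the last inequality using $|V|\ge 6(k+1)$. Let $C_1,\dots,C_p$ be the connected components of $G\setminus S_0$ (at least one, as $|S_0|\le k+1<|V|$), and put $w_i=|W\cap C_i|$ for $i\ge 1$ and $w_0=|W\cap S_0|$, so $\sum_{i=0}^{p}w_i=|W|$.

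Next I would convert the $\delta$-sample property into quantitative bounds on the $w_i$ and then $2$-colour the components. Each $C_i$ and each complement $(V\setminus S_0)\setminus C_i$ belongs to $\mathcal F_{S_0}\subseteq\mathcal F_{k+1}$, so (\ref{eq:dsample}) applies to all of them. From $C_i$ we get $w_i\le(c(S_0)+\delta)|W|\le(\tfrac35+\tfrac1{24})|W|<\tfrac23|W|$; combining the lower estimates for $C_i$ and for $(V\setminus S_0)\setminus C_i$ gives $\sum_{j\ge1}w_j\ge(\tfrac{|V\setminus S_0|}{|V|}-2\delta)|W|$, hence $w_0\le(\tfrac{|S_0|}{|V|}+2\delta)|W|\le(\tfrac16+2\delta)|W|\le\tfrac14|W|$. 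Now partition the components into two groups and assign $w_0$ to the first: if some $w_{i^\star}>\tfrac13|W|$, place $C_{i^\star}$ alone in the second group, so that side has mass $w_{i^\star}<\tfrac23|W|$ and the first has mass $|W|-w_{i^\star}<\tfrac23|W|$; otherwise every $w_i\le\tfrac13|W|$ and, since $\sum_{j\ge1}w_j=|W|-w_0\ge\tfrac34|W|$, one adds components to the second group until its mass first reaches $\tfrac13|W|$, the last addition raising it by at most $\tfrac13|W|$ so that it stays below $\tfrac23|W|$, while the first group then has mass $\le\tfrac23|W|$. This produces a partition $W=A\sqcup B$ with $|A|,|B|\le\tfrac23|W|$; moreover $S_0$ separates $A$ and $B$ in $G$, since an $A$--$B$ path either starts in $S_0$ or runs between two distinct components of $G\setminus S_0$ and so meets $S_0$. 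Hence a minimal separator $S$ of $A$ and $B$ satisfies $|S|\le|S_0|\le k+1$, which is the first claim.

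For the second claim I would argue directly. Let $A,B$ be any partition of $W$ with $|A|,|B|\le\tfrac23|W|$ and $S$ any minimal separator of $A$ and $B$ with $|S|\le k+1$. Then $A\setminus S$ and $B\setminus S$ are disjoint and cover $W\setminus S$, and $|W\cap S|\le|S|\le k+1\le\tfrac16|W|$, so $|W\setminus S|\ge\tfrac56|W|$. Consequently $|A\setminus S|\le|A|\le\tfrac23|W|=\tfrac45\cdot\tfrac56|W|\le\tfrac45|W\setminus S|$, and symmetrically for $|B\setminus S|$; therefore $\max(|A\setminus S|,|B\setminus S|)\le\tfrac45|W\setminus S|$.

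The one genuinely delicate point is the use of the $\delta$-sample guarantee in the second paragraph: it controls the $W$-mass only of elements of $\mathcal F_{k+1}$, i.e. single components of $G\setminus S'$ and unions of all-but-one such components, and a separator like $S_0$ is not of this form, so $w_0=|W\cap S_0|$ has to be bounded indirectly through the complement-of-a-component sets, making sure that every invocation of (\ref{eq:dsample}) is to a legitimate member of $\mathcal F_{k+1}$. Everything else — the balancing in the second paragraph and the counting in the third — is routine, though the constants are tight and rely on the hypotheses $\delta\le\frac1{24}$ and $|W|\ge 6(k+1)$.
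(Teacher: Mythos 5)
Your proof is correct, and the second claim is established essentially the same way as in the paper. For the first claim, though, you take a genuinely cleaner route. The paper works with the quantity $|W\cap C|/|W\setminus S^*|$ (via Lemma~\ref{lem:bdsoncw}), invokes Lemma~\ref{lem:23partition} to split $W\setminus S^*$ into two groups $A',B'$, and then needs a delicate case analysis on $|W\cap S^*|\in\{0,1,\ge 2\}$ to distribute $W\cap S^*$ between $A'$ and $B'$ while keeping both sides below $\tfrac23|W|$. You instead bound $|W\cap C_i|/|W|$ and $|W\cap S_0|/|W|$ directly against $|W|$ (getting $w_i<\tfrac23|W|$ and $w_0\le\tfrac14|W|$), commit all of $W\cap S_0$ to one side from the start, and do a two-case greedy balancing. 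This eliminates the allocation-of-$W\cap S^*$ case analysis entirely; the price is a slightly looser constant ($c(S_0)\le\tfrac35$ rather than the paper's $\tfrac{11}{20}$, and you rely on $w_0\le\tfrac14|W|$), but the hypotheses $\delta\le\tfrac1{24}$ and $|W|\ge 6(k+1)$ still cover it with room to spare. You also correctly flag the one subtle point — that (\ref{eq:dsample}) applies only to members of $\mathcal F_{k+1}$ and $S_0$ itself is not one, so $w_0$ must be bounded indirectly via a component and its complement — and your handling of it is exactly right. One small observation: you use Proposition~\ref{treewidth} to produce a particular balanced separator $S_0$, whereas the paper takes $S^*$ to be the minimizer of $c$ over all small vertex subsets; either works, since all you need is the existence of one separator of size $\le k+1$ with $c$ small enough.
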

In the latter proposition we refer to $\mathcal{F}_{k+1}$ and not $\mathcal{F}_k$, in order to be consistent with  Proposition~\ref{treewidth}.

\noindent Before we prove this result we formulate two useful lemmas.
The first one is merely a simple observation.
\begin{lemma}\label{lem:23partition}
	Let $U=\bigcup_{i=1}^d C_i$ be a partition of $U$ into disjoint sets such that $|C_1|\geq \cdots\geq |C_d|\geq 0$. If $|C_1|\leq \alpha |U|$ for $\alpha\geq \frac{2}{3}$ then $(1-\alpha)|U|\leq \sum_{i=1}^t |C_i| \leq \alpha|U|$ for some $t$.
\end{lemma}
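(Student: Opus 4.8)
The plan is to prove Lemma~\ref{lem:23partition} by a greedy "bin-filling" argument: starting from the empty set, add the sets $C_1, C_2, \ldots$ one at a time (in the given decreasing order of size) to a running union, and stop at the first index $t$ at which the accumulated size $\sum_{i=1}^t |C_i|$ first reaches or exceeds $(1-\alpha)|U|$. I would then argue that this stopping index $t$ also satisfies the upper bound $\sum_{i=1}^t |C_i| \le \alpha |U|$, so that $t$ is the desired index.

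The key step is the verification of the upper bound at the stopping time. By minimality of $t$ we have $\sum_{i=1}^{t-1} |C_i| < (1-\alpha)|U|$ (this is vacuously the empty sum when $t=1$, and $0 < (1-\alpha)|U|$ holds since $\alpha < 1$, noting that if $\alpha \ge 1$ the statement is trivial because we may take $t=d$). Hence
\[
\sum_{i=1}^{t} |C_i| \;=\; \sum_{i=1}^{t-1} |C_i| + |C_t| \;<\; (1-\alpha)|U| + |C_t|.
\]
Now I distinguish two cases. If $t \ge 2$, then $|C_t| \le |C_1| \le \alpha|U|$, but actually we need the sharper bound: since $|C_t|$ is the smallest among $C_1,\ldots,C_t$, and these $t$ sets together have total size less than $(1-\alpha)|U| + |C_t|$, one gets $|C_t| \le \frac{1}{t}\sum_{i=1}^t |C_i|$; combined with the fact that $|C_t| \le |C_1| \le \alpha |U|$ this already suffices when $(1-\alpha)|U| + \alpha|U| = |U|$ — wait, that is an equality, so we must be slightly more careful. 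The clean way: $|C_t|\le |C_1|\le\alpha|U|$, and moreover $|C_t|$ is no larger than the average of $C_1,\dots,C_{t-1}$ when $t\ge2$, so $|C_t|\le\frac{1}{t-1}\sum_{i=1}^{t-1}|C_i| < \frac{(1-\alpha)|U|}{t-1}$; since $\alpha\ge\frac23$ we have $1-\alpha\le\frac13\le\frac{\alpha}{2}\le\alpha$, hence $|C_t|<\frac{(1-\alpha)}{t-1}|U|\le(1-\alpha)|U|$ when $t\ge2$, giving $\sum_{i=1}^t|C_i|<2(1-\alpha)|U|\le\alpha|U|$ (using $2(1-\alpha)\le\alpha$, equivalent to $\alpha\ge\frac23$). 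If $t=1$, then $\sum_{i=1}^t|C_i|=|C_1|\le\alpha|U|$ directly by hypothesis. In both cases the upper bound holds, and the lower bound holds by the choice of $t$, so we are done.

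I expect the only real subtlety — and hence the "main obstacle" — to be handling the boundary case carefully, namely ensuring the strict inequality $\sum_{i=1}^{t-1}|C_i|<(1-\alpha)|U|$ combined with the size-ordering gives $\sum_{i=1}^t|C_i|\le\alpha|U|$ rather than just $<|U|$; this is exactly where the hypothesis $\alpha\ge\frac23$ (equivalently $2(1-\alpha)\le\alpha$) is used, via the fact that $|C_t|$ cannot exceed the average block size already accumulated. One should also dispose of the trivial cases $U=\emptyset$ (any $t$ works, both bounds being $0\le 0$) and $d=0$ or all $C_i$ empty. Everything else is routine bookkeeping.
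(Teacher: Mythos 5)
Your proof is correct. The paper itself does not supply an argument for Lemma~\ref{lem:23partition} (it is dismissed as ``merely a simple observation''), so there is no ``paper proof'' to compare against; what matters is whether your argument closes the gap, and it does. The greedy stopping-time formulation is exactly the right tool, and you correctly identify and handle the one non-trivial point: at the stopping index $t\geq 2$ one cannot simply bound $|C_t|$ by $|C_1|\leq\alpha|U|$ (that only gives $\sum_{i\le t}|C_i|<|U|$), but the descending ordering forces $|C_t|\leq |C_{t-1}|\leq \sum_{i=1}^{t-1}|C_i|<(1-\alpha)|U|$, whence $\sum_{i\le t}|C_i|<2(1-\alpha)|U|\leq \alpha|U|$; this last step is precisely where $\alpha\geq\tfrac23$ enters, and the $t=1$ case is closed directly by the hypothesis $|C_1|\leq\alpha|U|$. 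One small remark: your averaging bound $|C_t|\leq\frac{1}{t-1}\sum_{i=1}^{t-1}|C_i|$ works, but the even simpler observation that $|C_t|\leq |C_{t-1}|$ and $|C_{t-1}|$ is a nonnegative summand of $\sum_{i=1}^{t-1}|C_i|$ already yields $|C_t|<(1-\alpha)|U|$, without dividing by $t-1$; and the intermediate chain $1-\alpha\leq\tfrac13\leq\tfrac{\alpha}{2}\leq\alpha$ is an unneeded detour. The edge cases ($\alpha\geq 1$, $U=\emptyset$) you dispatch correctly.
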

To state the second lemma,
write $\lambda=1-\frac{k+1}{|V|}$ and note that $\frac{|V\setminus S|}{|V|}\geq \lambda$ for all $S$ such that $|S|\leq k+1$. 
\begin{lemma}\label{lem:bdsoncw}
	Suppose $W\subset V$ is a $\delta$-sample for $\mathcal F_{k+1}$ and let $S\subset V$ with $|S|\leq k+1$. If $C\in \cC^S$, then 
	\begin{equation}\label{eq:aux1}
		\frac{\lambda c(S)-\delta}{\lambda+2\delta}\;\;\leq \;\;\frac{|W\cap C|}{|W\setminus S|}\;\;\leq \;\;\frac{\lambda c(S)+\delta}{\lambda-2\delta}.
\end{equation}
\end{lemma}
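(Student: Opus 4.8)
The plan is to apply the $\delta$-sample property (\ref{eq:dsample}) to two well-chosen members of $\mathcal F_{k+1}$ and then reduce the claim to an elementary monotonicity computation. Since $|S|\le k+1$, a connected component $C\in\cC^S$ lies in $\mathcal F_{k+1}$, and so does its complement inside $V\setminus S$, namely $(V\setminus S)\setminus C$, which is the union of all components of $\cC^S$ other than $C$. It is cleanest to prove first the per-component estimate
\[
\frac{\lambda\gamma-\delta}{\lambda+2\delta}\;\le\;\frac{|W\cap C|}{|W\setminus S|}\;\le\;\frac{\lambda\gamma+\delta}{\lambda-2\delta},\qquad\text{where }\gamma:=\frac{|C|}{|V\setminus S|};
\]
for $C$ a largest component one has $\gamma=c(S)$ and this is exactly (\ref{eq:aux1}), while for a smaller component $\gamma<c(S)$ and the per-component upper bound still gives the upper bound in (\ref{eq:aux1}).

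First I would apply (\ref{eq:dsample}) to the sets $C$ and $(V\setminus S)\setminus C$ separately. Since $W\cap C$ and $W\cap((V\setminus S)\setminus C)$ are disjoint with union $W\setminus S$, adding the two resulting inequalities gives
\[
\frac{|V\setminus S|}{|V|}-2\delta\;\le\;\frac{|W\setminus S|}{|W|}\;\le\;\frac{|V\setminus S|}{|V|}+2\delta .
\]
Then I would write $\frac{|W\cap C|}{|W\setminus S|}=\frac{|W\cap C|/|W|}{|W\setminus S|/|W|}$, bound the numerator using that it differs from $|C|/|V|$ by at most $\delta$, and bound the denominator using the last display. Setting $t:=\frac{|V\setminus S|}{|V|}$ and using $|C|/|V|=\gamma t$, this yields
\[
\frac{\gamma t-\delta}{t+2\delta}\;\le\;\frac{|W\cap C|}{|W\setminus S|}\;\le\;\frac{\gamma t+\delta}{t-2\delta},
\]
valid whenever $t-2\delta>0$. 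Since $|S|\le k+1$ we have $t\ge\lambda=1-\frac{k+1}{|V|}$, and $\lambda-2\delta>0$ is precisely the condition under which the right-hand side of (\ref{eq:aux1}) is defined, so all denominators above are positive. A one-line derivative check shows $t\mapsto\frac{\gamma t+\delta}{t-2\delta}$ is decreasing and $t\mapsto\frac{\gamma t-\delta}{t+2\delta}$ is increasing on $[\lambda,1]$; hence both sides are extremized at $t=\lambda$, which gives the per-component estimate and therefore (\ref{eq:aux1}).

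The computation is entirely routine; the only points needing care are the bookkeeping of inequality directions when passing from two-sided bounds on the numerator and denominator to a bound on the quotient, and the positivity of the denominators (equivalently $W\setminus S\neq\emptyset$ and $\lambda>2\delta$, the latter being implicit in the statement of (\ref{eq:aux1})). The one genuinely substantive step is the observation at the outset that $(V\setminus S)\setminus C\in\mathcal F_{k+1}$, which is what allows the $\delta$-sample property to control $|W\setminus S|$ itself rather than only $|W\cap C|$. Beyond that there is no real obstacle.
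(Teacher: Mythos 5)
Your proof is correct and takes essentially the same route as the paper: apply the $\delta$-sample property to both $C$ and its complement $(V\setminus S)\setminus C$ to obtain two-sided control of $|W\setminus S|/|W|$, then bound the quotient $|W\cap C|/|W\setminus S|$ and use monotonicity in $t=|V\setminus S|/|V|\geq\lambda$ to replace $t$ by $\lambda$. One small but genuine point in your favor: you are careful to observe that the per-component estimate (with $\gamma=|C|/|V\setminus S|$) only yields the stated \emph{lower} bound when $C$ is a largest component, so that $\gamma=c(S)$; for a smaller component $\gamma<c(S)$ and only the upper bound of (\ref{eq:aux1}) transfers. The paper's proof establishes only the upper bound explicitly and dismisses the lower bound with ``similar arguments,'' but as stated the lower bound is not in fact valid for every $C\in\cC^S$ (a small component can easily have $|W\cap C|/|W\setminus S|$ below $\frac{\lambda c(S)-\delta}{\lambda+2\delta}$); this causes no downstream harm because the lower bound is invoked only for the maximal component in the proof of Proposition~\ref{prop:dsamplesep}, but your phrasing makes this precision explicit.
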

\begin{proof}
	Using (\ref{eq:dsample}), we get
$$
\frac{|W\cap C|}{|W\setminus S|}\;\;\leq\;\; \left(\frac{|C|}{|V|}+\delta\right)\frac{|W|}{|W\setminus S|}\;\;\leq\;\;\left(c(S)\frac{|V\setminus S|}{|V|}+\delta\right)\frac{|W|}{|W\setminus S|}.
$$
To bound the last expression, let $\bar C=V\setminus (C\cup S)$. Since $C,\bar C\in \mathcal F_{k+1}$, we get
$$\frac{|W\setminus S|}{|W|}\;=\;\frac{|C\cap W|}{|W|}+\frac{|\bar C\cap W|}{|W|}\;\geq \;\left(\frac{|C|}{|V|}+\frac{|\bar C|}{|V|}-2\delta\right) \;=\; \frac{|V\setminus S|}{|V|}-2\delta.$$
A similar argument gives an upper bound for $\frac{|W\setminus S|}{|W|}$, which after taking reciprocals gives
\begin{equation}\label{eq:WS}
\frac{1}{\frac{|V\setminus S|}{|V|}+2\delta}\;\leq\;\frac{|W|}{|W\setminus S|}\;\leq\; \frac{1}{\frac{|V\setminus S|}{|V|}-2\delta}	
\end{equation}
This gives the upper bound in (\ref{eq:aux1}) because
$$
\frac{|W\cap C|}{|W\setminus S|}\;\;\leq\;\;\frac{c(S)\frac{|V\setminus S|}{|V|}+\delta}{\frac{|V\setminus S|}{|V|}-2\delta}\;\;\leq\;\;\frac{\lambda c(S)+\delta}{\lambda-2\delta},
$$
where the last inequality follows by the fact that the middle expression is a decreasing function of $\frac{|V\setminus S|}{|V|}$ and $\frac{|V\setminus S|}{|V|}\geq \lambda$. This establishes the upper bound in (\ref{eq:aux1}). The lower bound follows by similar arguments. 
\end{proof}

\begin{proof}[Proof of Proposition~\ref{prop:ifdsample}]
Let $S^*$ be a minimizer of $c(S)$  among all $S\subset V$ such that $|S|\leq k+1$. By Proposition~\ref{treewidth} $c(S^*)\leq\frac{1}{2}\frac{|V|-k}{|V|-(k+1)}$, which is further bounded by $11/20$ if $|V|\geq 6(k+1)$. By Lemma~\ref{lem:bdsoncw}, if  $C\in \cC^{S^*}$ then
$$
\frac{|W\cap C|}{|W\setminus S^*|}\;\leq\;\frac{\frac{11}{20}\lambda+\delta}{\lambda-2\delta}. 
$$
The right-hand side is an increasing function of $\delta$ and the maximum for $\delta\leq \frac{1}{24}$ is $(\frac{11}{20}\lambda+\frac{1}{24})/(\lambda-\frac{1}{12})$, which is bounded by $\frac{2}{3}$ because $\lambda\geq \frac{5}{6}$ (use $|V|\geq |W|\geq 6(k+1)$). This shows that $W\setminus S^*$ can be partitioned into disjoint subsets $W\cap C$ for $C\in \cC^{S^*}$ all of size at most $\frac{2}{3}|W\setminus S^*|$. By Lemma~\ref{lem:23partition}, we can group these sets into two groups $A'$ and $B'$ each of size at most $\frac{2}{3}|W\setminus S^*|$. To show the first claim let $A,B$ be any two sets partitioning $W$ that satisfy $A\setminus S^*=A'$, $B\setminus S^*=B'$. We next show that there is a choice of $A,B$ that gives $\max(|A|,|B|)\leq \frac{2}{3}|W|$. This is done by allocating the elements of $W\cap S^*$ in a balanced way between $A'$ and $B'$ so that both $A'$ and $B'$ get at most $\frac{2}{3}$ of the elements in $W\cap S^*$. This can be always done if $W\cap S^*$ has at least two elements. If $W\cap S^*$ is empty, the statement is trivial. If $|W\cap S^*|=1$ we consider two cases (i) $|A'|<|B'|$ and (ii) $|A'|=|B'|$. In case (i) we allocate the element in $W\cap S^*$ to $A'$. In that case
$$
|A|\;=\;|A'|+1\;\leq\; |B'|\;=\;|B|\;\leq \;\frac{2}{3}|W\setminus S^*|\;\leq\; \frac{2}{3}|W|.
$$
In case (ii), we again allocate the element in $W\cap S^*$ to $A'$ and use the fact that $|A'|=|B'|=\frac{1}{2}|W\setminus S^*|$, which gives
$$
|B|\;\leq\; |A|\;=\;|A'|+1\;=\;\frac{1}{2}|W\setminus S^*|+1\;=\;\frac{1}{2}|W|+\frac{1}{2}\;\leq\; \frac{2}{3}|W|,
$$
where the last inequality holds always if $|W|\geq 3$. This proves the first claim.

To show the second claim, assume $\max(|A|,|B|) \leq \frac{2}{3}|W|$. Since $|W|\geq 6(k+1)$ we have $|W\setminus S|\geq 5(k+1)\geq 5|W\cap S|$. Now
$$
\max(|A\setminus S|,|B\setminus S|)\leq \frac{2}{3}|W|\leq \frac{2}{3}(|W\setminus S|+|W\cap S|)\;\leq \; \frac{2}{3}\left(1+\frac{1}{5}\right)|W\setminus S|,
$$
which completes the argument.\end{proof}

\subsection{Separating and splitting}

We now propose a procedure ${\tt Separator}$ that finds a balanced separator in $G$. When a separator is found, decomposing the graph into connected components is straightforward and is given in the procedure ${\tt Components}$. 

The procedure starts by choosing a sample $W\subset V$. Then the algorithm looks for a partition of $W$ into two sets $A,B$ so that $|A|,|B|\leq \frac{2}{3}|W|$ and the rank of $\Sigma_{A,B}$ is small. In Proposition~\ref{prop:dsamplesep} we argue why such a partition exists with high probability. Then the algorithm uses ideas of Section~\ref{sec:ABsep} to efficiently find a minimal separator $S$ of $A$ and $B$ in $G$; by construction $|S|={\rm rank}(\Sigma_{A,B})$. At this moment a purely deterministic part of the process begins. Given $S$, the algorithm decomposes $V$ into connected components in $\cC^S$. This is done using rank conditions like in the tree-like case.

\begin{algorithm}
\label{alg:separator}
Pick a set $W$ by taking $m$ vertices uniformly at random, where $m$ satisfies (\ref{eq:boundW}) with $r=11k$ and $\delta=1/24$\;
Search exhaustively through all partitions of $W$ into sets
$ A,B$ with $|A|,|B|\leq \frac{2}{3}|W|$, minimizing ${\rm rank}(\Sigma_{A,B})$\;
If no balanced split exists, output any partition $ A,B$ of $W$\;
$S\gets {\tt ABSeparator}(A,B)$\;
\Return{ $S$}
  \label{alg:sep}
\caption{${\tt Separator}$}
\end{algorithm}

\begin{algorithm}
\label{alg:components}
  $S\leftarrow {\tt Separator(V)}$\; 
  $r\gets |S|$\;  
  $R \gets \emptyset $\tcp*{will contain one vertex from each $C\in \cC^S$}
  \For{$v\in V\setminus S$}{
  $notFound \gets True$\;

  \For{$u\in R$}{
    
  	\If{${\rm rank}(\Sigma_{uS,vS}) = r+1$ }{$C_u\leftarrow C_u\cup\{v\}$\; $notFound \gets False$\;}}
  	\If{$notFound$}{create $C_{v}= \left\{v\right\}$\;$R\leftarrow R\cup\{v\}$\;} }
  	\Return{  \upshape$S$ and all $C_v$ for $v\in R$\;}
\label{tlsplit2}
\caption{${\tt Components(V)}$}
\end{algorithm}

The next proposition shows that Algorithm~\ref{tlsplit2} outputs a balanced separator with high probability.

\begin{proposition}\label{prop:dsamplesep}
Let $G=(V,E)$ be a graph with ${\rm tw}(G)\leq k$, and $|V|\geq 6(k+1)$ vertices and let $\Sigma\in \mathcal M(G)$ be generic.
Let $\tau\in (0,1)$.  Then, with probability at least $1-\tau$, Algorithm~\ref{alg:sep} finds a separator $S$ in $G$ such that $|S|\leq k+1$ and $|C|\leq 0.93|V|$ for each connected component $C\in \cC^S$. 
	\end{proposition}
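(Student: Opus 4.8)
The plan is to combine the VC-sampling guarantee (Theorem~\ref{netsample} with Lemma~\ref{vc}) with the structural existence result of Proposition~\ref{prop:ifdsample}, and then translate the balance of the split on $W$ back to a balance statement on the whole vertex set $V$ via Lemma~\ref{lem:bdsoncw}. First I would set $r=11k$ as an upper bound for ${\rm VC}(\mathcal F_{k+1})$ — strictly speaking $\mathrm{VC}(\mathcal F_{k+1})\le 11(k+1)$, but the sampling bound is monotone, so I would be slightly more careful and use $r=11(k+1)$ or simply note that the algorithm's choice of $m$ in~(\ref{eq:boundW}) is taken with a valid $r$. Applying Theorem~\ref{netsample} with $\delta=\tfrac1{24}$ and the given $\tau$, the random sample $W$ is, with probability at least $1-\tau$, a $\tfrac1{24}$-sample for $\mathcal F_{k+1}$. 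Note also that since $m$ as chosen in~(\ref{eq:boundW}) is at least $10r/\delta^2\log(8r/\delta^2)$, one checks $|W|\geq 6(k+1)$, as required to invoke Proposition~\ref{prop:ifdsample}. (One should double-check: $m$ is a sample \emph{with replacement}, so $|W|$ could in principle be smaller than $m$ as a set; but the $\delta$-sample inequalities~(\ref{eq:dsample}) and all subsequent arguments can be read with $W$ as a multiset, or one observes that repeated vertices only help, so I would phrase everything with the multiset cardinality $m=|W|$.)

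Condition now on the event that $W$ is a $\tfrac1{24}$-sample. By Proposition~\ref{prop:ifdsample}, there exists a partition of $W$ into $A,B$ with $|A|,|B|\leq \tfrac23|W|$ whose minimal separator $S$ in $G$ has $|S|\leq k+1$; equivalently ${\rm rank}(\Sigma_{A,B})\leq k+1$ by Assumption~\ref{mntool}. Hence the exhaustive search in Algorithm~\ref{alg:sep} does \emph{not} fall through to the ``no balanced split'' branch, and it returns a balanced partition $A,B$ minimizing ${\rm rank}(\Sigma_{A,B})$; in particular the returned pair satisfies ${\rm rank}(\Sigma_{A,B})\leq k+1$, so ${\tt ABSeparator}(A,B)$ (correct by Proposition~\ref{prop:ABSep}) returns a minimal separator $S$ of $A$ and $B$ with $|S|={\rm rank}(\Sigma_{A,B})\leq k+1$. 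This gives the first claim $|S|\leq k+1$.

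It remains to bound $c(S)$, i.e.\ to show $|C|\le 0.93|V|$ for every $C\in\cC^S$. The key point is that $S$ separates $A$ and $B$ in $G$, so every connected component $C\in\cC^S$ contains vertices of $W$ from at most one of $A\setminus S$, $B\setminus S$; hence $|W\cap C|\leq \max(|A\setminus S|,|B\setminus S|)\leq \tfrac45|W\setminus S|$ by the ``moreover'' part of Proposition~\ref{prop:ifdsample}. Now I invoke the \emph{lower} bound of Lemma~\ref{lem:bdsoncw}: $\frac{|W\cap C|}{|W\setminus S|}\geq \frac{\lambda c(S)-\delta}{\lambda+2\delta}$ with $\lambda=1-\frac{k+1}{|V|}\geq \tfrac56$ and $\delta=\tfrac1{24}$. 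Combining, $\frac{\lambda c(S)-\delta}{\lambda+2\delta}\leq \tfrac45$, which rearranges to $c(S)\leq \frac{\tfrac45(\lambda+2\delta)+\delta}{\lambda}=\tfrac45+\frac{\tfrac{13}{5}\delta}{\lambda}$. Plugging $\delta=\tfrac1{24}$ and the worst case $\lambda=\tfrac56$ gives $c(S)\leq \tfrac45+\frac{13/120}{5/6}=\tfrac45+\tfrac{13}{100}=0.93$, as claimed. Applying Lemma~\ref{lem:bdsoncw} requires $\lambda>2\delta$, which holds since $\lambda\geq\tfrac56>\tfrac1{12}$.

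The main obstacle I anticipate is purely bookkeeping rather than conceptual: getting the constants to line up exactly at $0.93$. This forces $\delta=\tfrac1{24}$ (not merely $\delta\le\tfrac1{24}$) in the final estimate and requires the bound $\lambda\geq\tfrac56$, which in turn is where the hypothesis $|V|\geq 6(k+1)$ is used; one must also make sure the ``$11/20$ bound on $c(S^*)$ when $|V|\geq 6(k+1)$'' used inside Proposition~\ref{prop:ifdsample} is consistent with the slightly weaker $|V|\geq 6(k+1)$ hypothesis here, which it is. A secondary nuisance is the with-replacement sampling and the distinction between $|W|$ as a set versus $m$; I would resolve this once at the start by treating $W$ as a size-$m$ multiset throughout, as in the statement of Theorem~\ref{netsample}.
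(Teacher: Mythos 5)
Your proposal is correct and takes essentially the same route as the paper: choose $W$ as a $\delta$-sample via Theorem~\ref{netsample} and Lemma~\ref{vc}, use Proposition~\ref{prop:ifdsample} to guarantee that the exhaustive search over balanced splits succeeds with a separator of size at most $k+1$ and that any such split satisfies $\max(|A\setminus S|,|B\setminus S|)\le\tfrac45|W\setminus S|$, and then convert this into $c(S)\le 0.93$ via the lower bound of Lemma~\ref{lem:bdsoncw} (the paper re-derives that inequality inline for the maximal component $C^*$ rather than citing the lemma, but it is the same estimate). Your two flagged caveats --- that $\mathrm{VC}(\mathcal F_{k+1})\le 11(k+1)$ while the algorithm is stated with $r=11k$, and that sampling is with replacement so $W$ should be read as a multiset --- are genuine small gaps in the paper's presentation, and the fixes you propose are the correct ones.
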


\begin{proof}
	The procedure starts by choosing a sample $W\subset V$. The size of the sample $m$ is chosen so that, with probability at least $1-\tau$, $W$ is a $\delta$-sample for $\mathcal F_{k+1}$. A sufficient condition for $m$ follows by Theorem~\ref{netsample} and Lemma~\ref{vc}. Note that this condition also assures that $|W|\geq 6(k+1)$. 
	
	Since $\delta=1/24$, by Proposition~\ref{prop:ifdsample}, we can partition $W$ into two sets $A,B$ such that $|A|,|B|\leq \frac{2|W|}{3}$ and any minimal separator $S$ of $A$ and $B$ has at most $k+1$ elements. Moreover, for any such partition $|A\setminus S|,|B\setminus S|\leq \frac{4|W\setminus S |}{5}$. Now we only need to show that for each connected component $C\in \cC^S$ of $G$, $|C|\leq \frac{93}{100}|V|$. Indeed, if $C^*$ is the maximal component in $\cC^{S}$ then $\frac{|W\cap C^*|}{|W\setminus S|}\leq \frac{4}{5}$. Since $C^*$ 	lies in $\mathcal F_{k+1}$, we get
$$
\frac{|C^*\cap W|}{|W\setminus S|}\;\geq\;\left(\frac{|C^*|}{|V\setminus S|}\frac{|V\setminus S|}{|V|}-\delta\right)\frac{|W|}{|W\setminus S|}\overset{(\ref{eq:WS})}{\geq}\frac{c(S)\frac{|V\setminus S|}{|V|}-\delta}{\frac{|V\setminus S|}{|V|}+2\delta}.
$$
The expression on the right-hand side is an increasing function of $\frac{|V\setminus S|}{|V|}$ and $\frac{|V\setminus S|}{|V|}\geq \lambda$, which gives that
$$
\frac{|C^*\cap W|}{|W\setminus S|}\;\geq\;\frac{\lambda\frac{|C^*|}{|V\setminus S|}-\delta}{\lambda+2\delta}
$$
Since $\frac{|C^*\cap W|}{|W\setminus S|}\leq \frac{4}{5}$, $\delta\leq \frac{1}{24}$, and $\lambda\geq \frac{5}{6}$, we get that $\frac{|C^*|}{|V\setminus S|}\leq \frac{93}{100}$.

\end{proof}

\section{Recovery of bounded treewidth graphs}\label{sec:boundedTW}

In this section we present an algorithm for reconstructing graphs with bounded treewidth. Let $G=([n],E)$ be a graph and let $\Sigma\in \mathcal M(G)$ be generic. To recover $G$ from $\Sigma$ we follow a similar divide-and-conquer strategy as in the previous sections. First, a balanced separator $S$ is chosen and then the algorithm recurses into all the components in $\cC^S$. There is however a complication. If $C\in \cC^S$, then $G[C\cup S]$ is not equal to the graph of $\Sigma_{CS}$, unless for \emph{each} $C\in \cC^S$ the set of vertices in $S$ that are linked to $C$ by an edge is a clique of $G$; see \cite[Theorem 3.3]{frydenberg1990marginalization}. The condition holds, in particular, when $S$ is a clique, but in our case there is no way to assure that in general. If it does not hold, then the graph of $\mathcal G(\Sigma_{CS})$  is strictly bigger than the subgraph $G[C\cup S]$.
The next example illustrates this phenomenon.

\begin{example}
Consider the four-cycle given below together with the corresponding covariance and precision matrices
$$
\begin{tikzpicture}[baseline=-0.8cm,scale=0.6, every node/.style={scale=0.6},node distance=2.5cm,
        main node/.style={circle,draw,minimum size=1cm,inner sep=0pt]}]
    \node[main node] (1) {$1$};
    \node[main node] (2) [right of=1]  {$2$};
    \node[main node] (3) [below of=2] {$3$};
    \node[main node] (4) [left of=3] {$4$};
    \draw (1) -- (2) -- (3) -- (4) -- (1);
    \end{tikzpicture}\qquad\qquad
\Sigma\;=\;\begin{bmatrix}
	7 & -2 & 1 & -2\\
		-2 & 7 & -2 & 1\\
	1 & -2 & 7 & -2\\
	-2 & 1 & -2 & 7
\end{bmatrix}\qquad\qquad K=\frac{1}{24}\begin{bmatrix}
	4 & 1 & 0 & 1\\
		1 & 4 & 1 & 0\\
	0 & 1 & 4 & 1\\
	1 & 0 & 1 & 4
\end{bmatrix}
$$ 
Then $\{1,3\}$ separates $2$ and $4$ but the graph $\mathcal G(\Sigma_{123})$ is the complete graph over $\{1,2,3\}$ because
$$
  	\begin{bmatrix}
	7 & -2 & 1 \\
		-2 & 7 & -2 \\
	1 & -2 & 7 \\
\end{bmatrix}^{-1}\;=\;\frac{1}{96}\begin{bmatrix}
	15 & 4 & -1 \\
		4 & 16 & 4 \\
	-1 & 4 & 15 
\end{bmatrix}.$$
\end{example}

An easy way around this problem is by noting that Gaussian graphical models are closed under conditioning and this probabilistic statement has a useful algebraic counterpart. The graph of the conditional covariance is obtained from $G$ by removing the vertices in the conditioning set and all the incident edges. This means that  the edges in $G[C]$ can be recovered from the conditional covariance matrix 
\begin{equation}\label{eq:condcov}
\Sigma_{C|S}\; :=\; \Sigma_{C,C}-\Sigma_{C,S}\Sigma_{S,S}^{-1}\Sigma_{S,C}.	
\end{equation}
More concretely, we have the following basic result.

\begin{lemma}\label{lem:laststep}
	If $S$ separates $C$ from the rest of $G=\mathcal G(\Sigma)$ then $K_{C,C}=(\Sigma_{C|S})^{-1}$.
\end{lemma}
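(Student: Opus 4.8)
The plan is to read off the block structure of $K=\Sigma^{-1}$ using the hypothesis that $S$ separates $C$ from $D:=V\setminus(C\cup S)$ in $G=\mathcal G(\Sigma)$. Order the vertices as $(C,S,D)$, so that $\Sigma$ and $K$ are partitioned into $3\times 3$ blocks. The separation hypothesis means there is no edge of $G$ between $C$ and $D$, hence $K_{C,D}=K_{D,C}=0$ by the definition of the concentration graph. So $K$ has the block-arrow form with zero in the $(C,D)$ and $(D,C)$ positions, exactly as in the matrix appearing in the proof of Lemma~\ref{lem:prodrho} (equation~(\ref{eq:invsigtree})) with the roles of $A,k,B$ played by $C,S,D$.

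Next I would use the standard Schur-complement/block-inversion identity: for a symmetric positive definite matrix written in blocks indexed by $C$ and its complement $C^c=S\cup D$, the $(C,C)$ block of the inverse is $\bigl(\Sigma_{C,C}-\Sigma_{C,C^c}\Sigma_{C^c,C^c}^{-1}\Sigma_{C^c,C}\bigr)^{-1}$. The key point is that this Schur complement collapses to $\Sigma_{C|S}$ as defined in (\ref{eq:condcov}): because $K_{C,D}=0$, the conditional covariance of $X_C$ given $X_{C^c}$ depends only on $S$, i.e. $\Sigma_{C,C}-\Sigma_{C,C^c}\Sigma_{C^c,C^c}^{-1}\Sigma_{C^c,C}=\Sigma_{C,C}-\Sigma_{C,S}\Sigma_{S,S}^{-1}\Sigma_{S,C}$. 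Algebraically this is the identity that the Schur complement of a block in $\Sigma$ equals the inverse of the corresponding block of $K$ (here $\Sigma_{C|S}=(K_{C,C})^{-1}$), combined with the fact that conditioning on the larger set $C^c$ gives the same $(C,C)$ block of the precision as conditioning on $S$ alone, precisely because the extra variables $X_D$ are conditionally independent of $X_C$ given $X_S$. Inverting gives $K_{C,C}=(\Sigma_{C|S})^{-1}$.

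A clean way to make this rigorous without invoking probability is to exhibit the inverse directly, mimicking (\ref{eq:invsigtree}): write down a candidate for $K$ in block-arrow form, with $(C,C)$ block equal to $(\Sigma_{C|S})^{-1}$, and check that multiplying by $\Sigma$ yields the identity, using $\Sigma_{C,D}=\Sigma_{C,S}\Sigma_{S,S}^{-1}\Sigma_{S,D}$ — which holds by Theorem~\ref{th:STDrank}/Lemma~\ref{mntool3} since $S$ separates $C$ and $D$ so $\operatorname{rank}(\Sigma_{C,D})\le|S|$ and the generic rank identity forces the factorization through $S$. Alternatively, one simply cites that $\mathcal M(G)$ is closed under conditioning and that the conditional graph is $G$ with $S$ and its incident edges deleted (the probabilistic fact alluded to before the lemma), so $K_{C,C}$ is the precision matrix of the conditional law of $X_C$ given $X_S$, which is $(\Sigma_{C|S})^{-1}$ by definition of the precision matrix.

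The main obstacle is purely bookkeeping: one must be careful that "$S$ separates $C$ from the rest of $G$" really does give $K_{C,D}=0$ for \emph{all} of $D=V\setminus(C\cup S)$ (not merely between $C$ and one other component), and that the Schur complement over the whole complement $C^c$ genuinely reduces to the one over $S$. Both follow from the block-arrow zero pattern of $K$, but writing the block multiplication cleanly — as in (\ref{eq:invsigtree}) — is the one place where care is needed; everything else is a one-line consequence of the Schur-complement formula for block inverses.
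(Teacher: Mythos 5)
Your proposal is correct and, in its ``clean way to make this rigorous'' form, is essentially the paper's own proof: the paper first derives $\Sigma_{C,B}=\Sigma_{C,S}\Sigma_{S,S}^{-1}\Sigma_{S,B}$ from Lemma~\ref{mntool3} and the Guttman formula, and then exhibits the block-arrow inverse explicitly via a matrix identity analogous to (\ref{eq:invsigtree}), exactly as you suggest. Your alternative Schur-complement phrasing ($K_{C,C}=(\Sigma_{C|C^c})^{-1}$ collapsing to $(\Sigma_{C|S})^{-1}$) is a valid repackaging of the same identity, but it is not a genuinely different route.
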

This result can be argued by standard properties of the Gaussian distribution as $K_{C}$ is the inverse of the conditional covariance matrix $\Sigma_{C|[n]\setminus C}$; see, for example, equation (C.3) in \cite{lauritzen1996graphical}. If $S$ separates $C$ from the remaining vertices then, by conditional independence, this conditional covariance matrix is equal to $\Sigma_{C|S}$. For clarity and completeness, we also include an independent purely algebraic argument. 
\begin{proof}First note that, if $S$ separates $C$ from the rest $B=V\setminus (C\cup S)$ we have
$$
\Sigma_{C,B}\;=\;\Sigma_{C,S}\Sigma_{S,S}^{-1}\Sigma_{S,B}.
$$ 
Indeed, by Lemma~\ref{mntool3}, ${\rm rank}(\Sigma_{CS,SB})=|S|$. Using the Guttman rank additivity formula given below in (\ref{eq:guttman}) we conclude that the matrix $\Sigma_{C,B|S}:=\Sigma_{C,B}-\Sigma_{C,S}\Sigma_{S,S}^{-1}\Sigma_{S,B}$ is zero. Moreover, the matrix equation 
\begin{equation}\label{eq:bigmatrprod}
\begin{bmatrix}
\Sigma_{C,C} & \Sigma_{C,S} & \Sigma_{C,S}\Sigma_{S,S}^{-1}\Sigma_{S,B}\\	
\Sigma_{C,C} & \Sigma_{C,S} & \Sigma_{C,B}\\	
\Sigma_{B,S}\Sigma_{S,S}^{-1}\Sigma_{S,C} & \Sigma_{C,S} & \Sigma_{C,B}\\	
\end{bmatrix}\cdot \begin{bmatrix}
\mathbf A & \mathbf B & \mathbf 0\\	
\mathbf B^T & \mathbf C & \mathbf D\\	
\mathbf 0 & \mathbf D^T & \mathbf E\\	
\end{bmatrix}\;=\;I_n	
\end{equation}
is satisfied for 
$$
\mathbf A=(\Sigma_{C|S})^{-1},\quad \mathbf B=-(\Sigma_{C|S})^{-1}\Sigma_{C,S}\Sigma_{S,S}^{-1},
$$
$$
\mathbf E=(\Sigma_{B|S})^{-1},\quad \mathbf D=-(\Sigma_{B|S})^{-1}\Sigma_{B,S}\Sigma_{S,S}^{-1},
$$
and
$$
\mathbf C=\Sigma_{S,S}^{-1}\left(\Sigma_{S,S}+\Sigma_{S,C}(\Sigma_{C|S})^{-1}\Sigma_{C,S}+\Sigma_{S,B}(\Sigma_{B|S})^{-1}\Sigma_{B,S}\right)\Sigma_{S,S}^{-1}.
$$
The first term in the product in (\ref{eq:bigmatrprod}) is $\Sigma$. Thus the second term is $K=\Sigma^{-1}$. In particular, $K_{C,C}=\mathbf A=(\Sigma_{C|S})^{-1}$.
\end{proof}

This result shows that in order to keep information about the induced subgraph $G[C]$ once we regress on $C$, it is important to keep the information about the separating set. To see how this is done, it is helpful to study the situation in Figure~\ref{fig:recursion}. Suppose that $S$ separates $G$ into several components one of which is $C$. We then recurse our algorithm on $C$ by conditioning on $S$. In the next step we use the matrix $\Sigma_{C|S}$ to find a balanced separator $S'$ of $G[C]$. We then recurse on the corresponding components $C_1,C_2,C_3,C_4$. Note that in the next step it is not enough to condition on $S'$ to study $G[C_2]$ because it is connected to the rest of the graph through $S$. Therefore, in this recursive call we need to work with the conditional covariance matrix $\Sigma_{C_2|SS'}$. 

The dependence on separating sets requires a modification of the algorithms that we use to decompose the graph. Instead of working on the covariance matrix, they should be working on the conditional covariance matrix. Note however, that rank queries for $\Sigma_{A,B|S}$ with $A,B\subset C$ are equivalent to rank queries on  $\Sigma_{AS,BS}$. Indeed,  by the Guttman rank additivity formula (see e.g. \cite[Section 0.9]{zhang})
\begin{equation}\label{eq:guttman}
{\rm rank}(\Sigma_{AS,BS})\;=\;{\rm rank}(\Sigma_{S})+{\rm rank}(\Sigma_{A,B|S})\;=\;|S|+{\rm rank}(\Sigma_{A,B|S}).	
\end{equation}
Therefore the algorithms ${\tt ABSeparator}(A,B)$, ${\tt Separator}$, ${\tt Components}$ have their simple modifications  ${\tt ABSeparator}(V,A,B,S)$, ${\tt Separator}(V,S)$, ${\tt Components}(V,S)$, where a set $S$  disjoint from $V$ is added to both the row and the column set in all the rank queries. For completeness we explicitly provide these algorithms in Appendix~\ref{app:algs}.

In Algorithm~\ref{main} we present the complete algorithm which relies on routine ${\tt Reconstruct}$, which is then called recursively in Algorithm~\ref{alg-learn-struct}. With a fixed bound $k$ on the treewidth of $G$, the main algorithm returns the precision matrix $K=\Sigma^{-1}$.

\begin{algorithm}
{$ \wh K\gets 0\in \R^{n\times n}$    \\
 Fix $m$ satisfying (\ref{eq:boundW}) with $r=11k$ and $ \delta=\frac{1}{24}$\\
Reconstruct${\tt([n],\emptyset)}$
\label{main}
\caption{$Main\;algorithm$}}
\end{algorithm}

At each call of {\tt Reconstruct}$(V,S)$, if the input vertex set $V$ is larger than the fixed threshold $m$, then ${\tt Separator}(V)$ finds a balanced separator $S'$ of $G[V]$. Then the procedure ${\tt Components}$ finds all connected components $C_i$ in $G[V]\setminus S'$. Subsequently, ${\tt Reconstruct}$ recurses in all these components replacing $S$ with $S\cup S'$ as in  Figure~\ref{fig:recursion}.

\begin{figure}\centering
	\includegraphics[scale=.7]{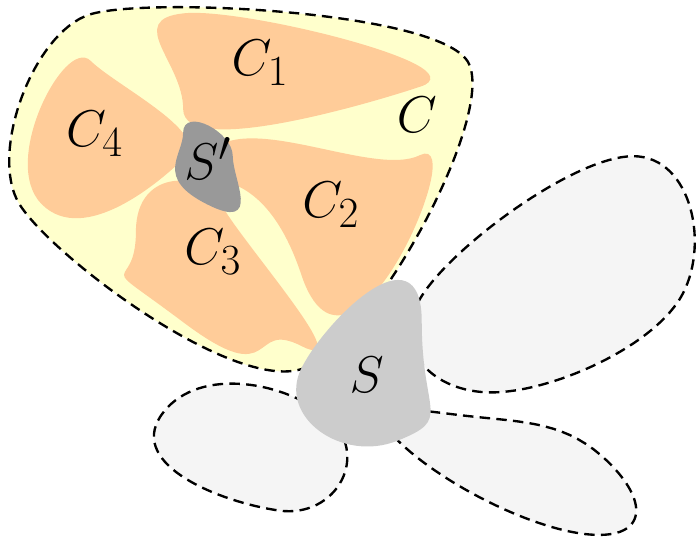}
	\caption{Components found during subsequent recursive calls to {\tt Reconstruct}. The algorithm needs to keep track of the separators found at all levels, as $C_2$ and $C_3$ are connected by both $S'$ and $S$.}\label{fig:recursion}
\end{figure}

Most edges in $K$ are only reconstructed in the final recursive calls. Consider the situation in Figure~\ref{fig:recursion}. Suppose that ${\tt Reconstruct}(C,S)$ recurses to ${\tt Reconstruct}(C_1,S\cup S')$. If $|C_1|\leq m$ then ${\tt Reconstruct}(C_1,S\cup S')$ computes $K_{C_1}$, which by Lemma~\ref{lem:laststep} is equal to the inverse of $\Sigma_{C_1|SS'}$. The matrices $K_{C_1 S'}$ and $K_{S'S'}$ can be computed in a similar way as described in the lemma below. \begin{lemma}\label{lem:recurse}
Suppose $C\in \cC^S$ and that $C$ is further decomposed into $S'$ and the connected components $\{C_1,\ldots,C_d\}$ 
(as in Figure~\ref{fig:recursion}). Let $K=\Sigma^{-1}$. The submatrix $K_C$ has a block structure with $K_{C_i,C_j}=0$ for $i\neq j$ and
$$
   K_{C_i,S'} = -K_{C_i}\Sigma_{C_i,S'|S}\Sigma_{S'|S}^{-1},\qquad 
K_{S'} = \left(\mathbb I_{|S'|}-\sum_{i=1}^d K_{S',C_i}\Sigma_{C_i,S'|S}\right)\Sigma_{S'|S}^{-1},
$$
where $\mathbb I_m$ denotes the $m\times m$ identity matrix. 
\end{lemma}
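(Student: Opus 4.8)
The plan is to prove the block structure and the two explicit formulas by working with the $(C \cup S')$-block of the defining matrix identity $\Sigma_{C \cup S'} \cdot (K^{S})_{C \cup S'}$, where $K^S := (\Sigma_{C \cup S'|S})^{-1}$ is the precision matrix of the conditional distribution given $S$. The key structural observation is that, by Lemma~\ref{lem:laststep}, conditioning on $S$ amounts to passing to the induced subgraph $G[C]$ (the vertices of $S$ and incident edges are removed), so the relevant graph on vertex set $C = S' \cup C_1 \cup \dots \cup C_d$ has $S'$ separating the $C_i$ from one another. Hence I would first invoke Lemma~\ref{lem:laststep} to identify $K_{C} = K^S_{C}$ — more precisely, $K_{C,C} = (\Sigma_{C|S})^{-1}$ and likewise the full $(C\cup S')$-block of $K$ equals the inverse of $\Sigma_{C\cup S'|S}$ — and then apply the tree-case computation of Lemma~\ref{lem:prodrho}, or rather its bounded-treewidth analogue, to this conditional covariance matrix with the role of the separating vertex played by the set $S'$.

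Concretely, the second step is to write down the analogue of the block matrix identity (\ref{eq:invsigtree})/(\ref{eq:bigmatrprod}) for the conditional covariance $\Sigma_{C\cup S'|S}$ arranged in blocks indexed by $C_1, \dots, C_d, S'$. Because $S'$ separates each $C_i$ from the others in $G[C]$, Lemma~\ref{mntool3} (applied to the conditional covariance, equivalently to rank queries on $\Sigma_{\,\cdot\, S, \,\cdot\, S}$ via the Guttman formula (\ref{eq:guttman})) gives $\Sigma_{C_i, C_j|S} = \Sigma_{C_i,S'|S}\,\Sigma_{S'|S}^{-1}\,\Sigma_{S',C_j|S}$ for $i \neq j$. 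This is exactly the factorization that makes the off-diagonal blocks of the inverse vanish, so $K_{C_i,C_j}=0$ for $i\neq j$ follows by the same direct verification as in the proof of Lemma~\ref{lem:laststep}: one exhibits a candidate inverse with the claimed zero pattern and checks the product is the identity. The diagonal blocks then come out as $K_{C_i} = (\Sigma_{C_i|S})^{-1}$ — consistent with Lemma~\ref{lem:laststep} applied one level deeper, since $S\cup S'$ separates $C_i$ from the rest — and the cross blocks $K_{C_i,S'}$ and the block $K_{S'}$ are read off from the candidate inverse, yielding precisely $K_{C_i,S'} = -K_{C_i}\Sigma_{C_i,S'|S}\Sigma_{S'|S}^{-1}$ and $K_{S'} = \bigl(\mathbb I_{|S'|} - \sum_i K_{S',C_i}\Sigma_{C_i,S'|S}\bigr)\Sigma_{S'|S}^{-1}$, after collecting the Schur-complement terms exactly as the scalar $c$ was assembled in (\ref{eq:invsigtree}).

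The remaining point is bookkeeping: one must be careful that $\Sigma_{C_i,S'|S}$ and $\Sigma_{S'|S}$ are themselves conditional quantities, and that $\Sigma_{S'|S}^{-1}$ makes sense (it does, since $\Sigma$ is positive definite and conditional covariances of positive definite matrices are positive definite). I would also note that the formulas are consistent under transposition, $K_{S',C_i} = K_{C_i,S'}^{T}$, which is automatic from symmetry of $K$, and that summing over $i$ in the formula for $K_{S'}$ correctly accounts for all components adjacent to $S'$.

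I expect the main obstacle to be purely organizational rather than conceptual: setting up the block identity with the correct conditional-covariance entries and verifying — via the Guttman rank additivity formula (\ref{eq:guttman}) and Assumption~\ref{mntool} — that the rank/separation conditions that hold for $G$ translate into the needed factorization $\Sigma_{C_i,C_j|S} = \Sigma_{C_i,S'|S}\Sigma_{S'|S}^{-1}\Sigma_{S',C_j|S}$ on the conditional matrix. Once that factorization is in hand, the inversion is a routine Schur-complement computation identical in spirit to the proof of Lemma~\ref{lem:laststep}, and no genuinely new idea is required.
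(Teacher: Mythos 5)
Your proposal is correct and would succeed, but it takes a genuinely more laborious route than the paper. The paper's proof is a two-line extraction: first, $K_{C_i,C_j}=0$ for $i\neq j$ is obtained directly from the graph — $S\cup S'$ separates $C_i$ from $C_j$ in $G$, so there are no edges between them and the corresponding blocks of the precision matrix vanish by definition of the concentration graph, with no matrix algebra at all. Second, both displayed formulas are read off by multiplying out individual block rows of the already-established identity $K_C\,\Sigma_{C|S}=\mathbb I_{|C|}$ (from Lemma~\ref{lem:laststep}): the $(C_i,S')$-block row gives $K_{C_i}\Sigma_{C_i,S'|S}+K_{C_i,S'}\Sigma_{S'|S}=0$, and the $(S',S')$-block row gives $\sum_i K_{S',C_i}\Sigma_{C_i,S'|S}+K_{S'}\Sigma_{S'|S}=\mathbb I_{|S'|}$. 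Your plan instead re-runs the ``exhibit a block candidate inverse and verify'' machinery of the proof of Lemma~\ref{lem:laststep} (cf.\ (\ref{eq:bigmatrprod})) one level down, on $\Sigma_{C|S}$, which in particular requires you to establish and use the factorization $\Sigma_{C_i,C_j|S}=\Sigma_{C_i,S'|S}\Sigma_{S'|S}^{-1}\Sigma_{S',C_j|S}$ — a step the paper's argument never needs because the zero pattern is handed to it for free. Both approaches are sound; the trade-off is that your route is self-contained Schur-complement algebra (and would generalize to contexts where you don't already know the zero pattern), while the paper's exploits the graph-theoretic characterization of the support of $K$ to avoid essentially all of that work.
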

\begin{proof}
There are no direct links between $C_i$ and $C_j$ in $G$ and so  $K_{C_i,C_j}=0$ for $i\neq j$. Lemma~\ref{lem:laststep} gives the identity $K_C \Sigma_{C|S}=\mathbb I_{|C|}$. Taking the $C_i$-rows of $K_C$ and the $S'$-columns of $\Sigma_{C|S}$ we get from this identity that
$$
K_{C_i}\Sigma_{C_i,S'|S}+K_{C_i,S'}\Sigma_{S',S'|S}\;=\;0, 
$$
which implies the first equality. Taking the $S'$-rows and $S'$-columns we get
$$
\sum_{i=1}^d K_{S',C_i}\Sigma_{C_i,S'|S}+K_{S'}\Sigma_{S'|S}\;=\;\mathbb I_{|S'|},
$$
which implies the second formula. 
\end{proof}

\begin{algorithm}
 \uIf{$|V|> m$}
 {$C_{1},\dots,C_{d},S'\leftarrow {\tt Components}(V,S)$\;
 \For{$i$ \upshape from $1$ to $d$}{ 
		$\wh K_{C_i}\gets{\tt Reconstruct}(C_{i},S\cup S')$\;
		$\wh K_{C_i,S'}\gets - \wh K_{C_i}\Sigma_{C_i,S'|S}\Sigma_{S'|S}^{-1}$	
		} 
		$\wh K_{S'}\gets \left(\mathbb I_{|S'|}-\sum_{i=1}^d \wh K_{S',C_i}\Sigma_{C_i,S'|S}\right)\Sigma_{S'|S}^{-1}$\;}
\Else{\Return $\wh K_V$} 
\label{alg-learn-struct}
\caption{${\tt Reconstruct}(V,S)$}
\end{algorithm}

\begin{theorem}
${\tt Reconstruct}([n],\emptyset)$ correctly recovers $K=\Sigma^{-1}$. 
\end{theorem}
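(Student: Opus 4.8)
The plan is to argue by induction on the recursion tree of \texttt{Reconstruct}, showing that each call \texttt{Reconstruct}$(V,S)$ correctly computes $\wh K_V=K_V=(\Sigma_{V|S})^{-1}$, provided that $S$ separates $V$ from the rest of $G$ in $\mathcal G(\Sigma)$. The base case is when $|V|\le m$: the algorithm returns $\wh K_V=(\Sigma_{V|S})^{-1}$ directly (this is how the modified subroutines compute it, via the Guttman identity~\eqref{eq:guttman}), and by Lemma~\ref{lem:laststep} this equals $K_{V,V}$ since $S$ separates $V$ from $G\setminus(V\cup S)$. So the base case is exactly Lemma~\ref{lem:laststep}.

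For the inductive step, suppose $|V|>m$. First I would check that the separator structure is maintained: \texttt{Components}$(V,S)$ finds a balanced separator $S'$ of $G[V]$ (using rank queries on $\Sigma_{\cdot,\cdot|S}$, which by~\eqref{eq:guttman} are equivalent to rank queries on the ambient matrix), and it correctly identifies the connected components $C_1,\dots,C_d$ of $G[V]\setminus S'$ — this is exactly the content of Proposition~\ref{prop:dsamplesep} together with the correctness argument for \texttt{Components} (a vertex $u$ lands in the component of $v$ iff $S'$ does not separate $u$ and $v$ in $G[V]$, detected via Lemma~\ref{mntool3}). The key observation is then that $S\cup S'$ separates $C_i$ from the rest of $G$: indeed $S$ separates $V$ from $V^c$, and within $G[V]$ the set $S'$ separates $C_i$ from the other components, so any path from $C_i$ to a vertex outside $C_i$ must pass through $S\cup S'$. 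Hence the inductive hypothesis applies to each recursive call \texttt{Reconstruct}$(C_i,S\cup S')$, giving $\wh K_{C_i}=K_{C_i}=(\Sigma_{C_i|SS'})^{-1}$.

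It remains to assemble $K_V$ from the pieces. By Lemma~\ref{lem:recurse} (applied with the conditioning set $S$ in place of the empty set — which is legitimate because, conditionally on $X_S$, the graph $G[V]$ with separator $S'$ and components $C_1,\dots,C_d$ is precisely the setup of that lemma, and $\Sigma_{V|S}$ plays the role of the covariance matrix), we have $K_{C_i,C_j}=0$ for $i\ne j$, and the off-diagonal blocks $K_{C_i,S'}$ and the block $K_{S'}$ are given by the displayed formulas in Lemma~\ref{lem:recurse}. These are exactly the updates performed in Algorithm~\ref{alg-learn-struct}. Since the recursion fills in every block of $\wh K_V$ — the diagonal blocks $\wh K_{C_i}$ from the recursive calls, the blocks $\wh K_{C_i,S'}$ and $\wh K_{S'}$ from the explicit formulas, and the cross blocks $\wh K_{C_i,C_j}$ are zero by the lemma — we get $\wh K_V=K_V$. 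Taking $V=[n]$ and $S=\emptyset$ (so $\Sigma_{[n]|\emptyset}=\Sigma$) yields $\wh K=K=\Sigma^{-1}$.

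The main obstacle I anticipate is the bookkeeping around the claim ``$S\cup S'$ separates $C_i$ from the rest of $G$'' and the accompanying claim that Lemma~\ref{lem:recurse} may be invoked with a nonempty conditioning set. Both are intuitively clear from conditional independence, but a careful write-up must verify that the algebraic identities of Lemmas~\ref{lem:laststep} and~\ref{lem:recurse} remain valid when one works with $\Sigma_{V|S}$ rather than $\Sigma$ itself; the cleanest route is to observe that $\Sigma_{V|S}$ is the covariance matrix of $X_V\mid X_S$, whose concentration graph is $G[V]$ (standard marginalization/conditioning for Gaussian graphical models, or the purely algebraic version via~\eqref{eq:guttman}), so all earlier results apply verbatim to $\Sigma_{V|S}$ and $G[V]$. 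Everything else is a routine induction.
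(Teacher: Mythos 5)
Your proof is correct and follows essentially the same route as the paper's: both rest on the correctness of {\tt Components} (Proposition~\ref{prop:dsamplesep}), Lemma~\ref{lem:laststep} for the base case, and Lemma~\ref{lem:recurse} for assembling the blocks $K_{C_i,S'}$, $K_{S'}$ and the zero cross-blocks $K_{C_i,C_j}$. The paper's write-up is more telegraphic (it simply asserts properties (i) and (ii) hold "in each call" and invokes the three lemmas), whereas you make the induction on the recursion tree explicit, spell out the key separation fact that $S\cup S'$ separates $C_i$ from the rest of $G$, and flag that the algebraic lemmas must be re-read with the conditional covariance $\Sigma_{V|S}$ in the role of $\Sigma$ — all of which is exactly what the paper's phrase "using the discussion given in the beginning of this section" is gesturing at via the Guttman identity~\eqref{eq:guttman}. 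So your version is the same argument, carried out in more detail.
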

\begin{proof}
The correctness of {\tt Components}$([n],\emptyset)$ was already shown in Proposition~\ref{prop:dsamplesep}. Using the discussion given in the beginning of this section, we can easily adjust this proof for any call of {\tt Components}$(V,S)$. This, together with Lemma~\ref{lem:laststep}, implies that in each call of {\tt Components}$(V,S)$:
\begin{enumerate}\item[(i)]  If $u,v$ are put in different components components then $K_{uv} =0$.
\item[(ii)]  It holds that $K_{C_i}=(\Sigma _{C_i|SS'})^{-1}$.
\end{enumerate}
By (i) all (zero) entries in $K_{C_i,C_j}$ are correctly recovered. By (ii) all the entries of $K_{C_i}$ obtained by inverting $\Sigma _{C_i|SS'}$ are correct. By Lemma~\ref{lem:recurse}, also $K_{C_i,S'}$ and $K_{S',S'}$ are correctly recovered.  \end{proof}

\begin{theorem}\label{th:mainmain}
Let $G=([n],E)$ be a graph with treewidth ${\rm tw}(G)\leq k$ and 
maximum degree $\Delta(G)\le d$. Let $\Sigma\in \mathcal M(G)$ be generic and let $m$ in ${\tt Separator}$ satisfy (\ref{eq:boundW}) with $ r\geq 11k$, $\delta\leq \frac{1}{24}$, and $\tau \leq \frac{1}{3}$.
Then, with probability at least $1-\frac{1}{n^8}$, the query complexity of {\tt Reconstruct} is of the order 
\[
\mathcal O\left(
(2^{\mathcal{O}(k\log k)}+
dk\log n)k^2n\log ^3 n
\right),
\]
and the time complexity is of the order
\[
\mathcal O\left(
(2^{\mathcal{O}(k\log k)}+
dk\log n)k^3n\log ^4 n 
\right).
\] 
\end{theorem}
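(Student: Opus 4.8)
The theorem just above establishes that ${\tt Reconstruct}([n],\emptyset)$ returns the correct precision matrix $K$, so only the complexity needs to be bounded, and this splits into controlling (i) the depth of the recursion tree and (ii) the work done at a single node. I would begin with the depth. Call a call to ${\tt Separator}(V,S)$ \emph{good} if the separator $S'$ it returns satisfies $c(S')\le 0.93$ and \emph{bad} otherwise; since a fresh sample $W$ is drawn at every node, Proposition~\ref{prop:dsamplesep} (through Theorem~\ref{netsample} and Lemma~\ref{vc}) gives that each call is bad with probability at most $\tau\le\frac13$, independently of the rest of the run. Fix a vertex $v$ and follow the chain of recursive calls whose vertex sets contain $v$; it stops when $v$ enters a separator or when the active set has size at most $m$. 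A good step shrinks the active set by a factor of at most $0.93$, while a bad step still shrinks it by at least one vertex (any separator of two nonempty subsets of a connected graph is nonempty). Hence the chain has at most $g_{\max}=\log_{1/0.93}(n/m)=\mathcal O(\log n)$ good steps, and a Chernoff bound shows that its length exceeds $c\log n$ --- equivalently, that fewer than $g_{\max}$ of $c\log n$ independent $\mathrm{Bernoulli}(\ge\frac23)$ trials are successes --- with probability at most $n^{-9}$ for a suitable constant $c$. A union bound over the $n$ vertices shows that, with probability at least $1-n^{-8}$, the recursion depth is $L=\mathcal O(\log n)$; I condition on this event for the rest of the argument.

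On this event every conditioning set $S$ handed to a recursive call has $|S|\le (k+1)L=\mathcal O(k\log n)$, being a union of at most $L$ separators each of size at most $k+1$. Consequently every rank query issued anywhere --- by ${\tt ABSeparator}$, ${\tt Separator}$, or ${\tt Components}$ in their conditioned forms, using the equivalence~(\ref{eq:guttman}) --- is a query on a submatrix of $\Sigma$ of size $\mathcal O(m+|S|)=\mathcal O(k\log n)$ (recall $m=\Theta(k\log k)$), costing $\mathcal O(k\log n)$ oracle lookups beyond globally reused entries and $\mathcal O\big((k\log n)^3\big)$ arithmetic. The vertex sets $V$ occurring at a fixed recursion level are pairwise disjoint, so $\sum_V|V|\le n$ and at most $n/m$ of them exceed $m$ (the ones where ${\tt Separator}$ and ${\tt Components}$ actually do work), giving $\mathcal O(n\log n)$ nodes in total.

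Next I would bound one call ${\tt Reconstruct}(V,S)$ with $|V|>m$ (leaves only invert an $\mathcal O(m)$-sized matrix after a conditioning step, which is no more expensive). Inside it, ${\tt Separator}$ runs the exhaustive search over the $2^{|W|}=2^{\mathcal O(k\log k)}$ balanced partitions of $W$, one rank computation each, and then ${\tt ABSeparator}$ does $\mathcal O(|V|)$ rank computations (Proposition~\ref{prop:ABSep}); ${\tt Components}$ compares each $v\in V$ against one representative per connected component of $G[V]\setminus S'$, of which there are at most $(k+1)d$ because $\Delta(G)\le d$, so $\mathcal O(|V|kd)$ rank computations; and the conditional-covariance updates together with the assembly of $\wh K_{C_i,S'}$ and $\wh K_{S'}$ via Lemma~\ref{lem:recurse} reduce to inverting $\mathcal O(k\log n)$-sized matrices and multiplying the sparse blocks of $\wh K$ (which has at most $(d+1)n$ nonzero entries, so $\wh K_{C_i}\Sigma_{C_i,S'|S}$ costs only $\mathcal O(d|C_i|k)$) --- altogether $\mathcal O(|V|)\cdot\polylog n$. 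Thus a single call costs $\big(2^{\mathcal O(k\log k)}+\mathcal O(|V|kd)\big)\cdot\polylog n$ in both queries and time, the polylogarithmic factors tracking the $\mathcal O(k\log n)$ matrix dimension. Summing over one level with $\sum_V|V|\le n$ and at most $n/m$ active nodes gives a per-level cost of $\big(2^{\mathcal O(k\log k)}+dkn\big)\cdot\polylog n$, and multiplying by the $L=\mathcal O(\log n)$ levels, after keeping careful track of the exponents of $k$ and $\log n$ in the matrix operations, yields the stated query bound $\mathcal O\big((2^{\mathcal O(k\log k)}+dk\log n)k^2n\log^3 n\big)$ and time bound $\mathcal O\big((2^{\mathcal O(k\log k)}+dk\log n)k^3n\log^4 n\big)$.

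The main obstacle is exactly the interaction between the two steps: a ${\tt Separator}$ call that fails to produce a \emph{balanced} separator still returns a \emph{valid} one, so correctness is never at risk but the recursion depth becomes a random variable that must be controlled probabilistically, and every ``wasted'' level inflates the accumulated conditioning set $S$, whose size $\mathcal O(k\log n)$ then enters the dimension of \emph{every} rank query and matrix inversion further down the tree. Getting the $n^{-8}$ failure probability while keeping all the $\polylog n$ factors in check is the delicate part; a secondary subtlety is that one must exploit the degree bound $d$ twice --- once to bound the branching factor $(k+1)d$ of ${\tt Components}$, and once (via sparsity of $\wh K$) to keep the final assembly of $\wh K$ from costing $\Theta(|V|^2)$ per node.
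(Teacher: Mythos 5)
Your proposal is correct and follows essentially the same approach as the paper: bound the recursion depth by a Chernoff/Hoeffding argument along chains of recursive calls (the paper tracks root-to-leaf paths, you track a fixed vertex, which amounts to the same union bound over at most $n$ chains), then sum per-level costs using disjointness of the $V$'s at each level and $|S|=\mathcal O(k\log n)$. The only cosmetic difference is that you bound the sparsity of $\wh K_{C_i}$ via the degree bound $d$, whereas the paper uses the treewidth bound (at most $k|C_i|$ edges in a treewidth-$k$ subgraph); both give the needed linear-in-$|C_i|$ count of nonzeros.
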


Before we prove this result, a number of remarks are in order.

\begin{remark}
 The choice of $\tau\leq 1/3$ in Theorem~\ref{th:mainmain} is arbitrary; any other choice would only change the constant factors in the complexity bounds and the probability of exceeding the stated query and time complexity. Moreover, we note that the total probability of error can be made arbitrarily small. In the following proof we show that the recursion depth is $\mathcal{O}\left(\log n\right)$ with high probability. Then, executing the algorithm 
$\mathcal{O}(\log \frac{1}{\epsilon})$ 
times (each time stopping if it does not finish in the time limit stated in Theorem~\ref{th:mainmain}) we get at least one timely execution of the algorithm with probability $1-\epsilon$, regardless of $n$. The complexity only changes by a factor of 
$\mathcal{O}(\log\frac{1}{\epsilon})$. \end{remark}

\begin{remark}
As it is seen from the proof, the assumption of bounded degree can be relaxed. It can be substituted 
by the assumption that removing $\mathcal O(k\log n)$ vertices decomposes the graph into at most 
a polylogarithmic number of connected components. In other words, one may weaken the bounded-degree assumption
by suitable assumptions on the \emph{fragmentation} of the graph. We refer to \cite{hajiaghayi2003note} and \cite {hajiaghayi2007subgraph}
for more information on this notion. An example of a graph with unbounded degree for which our reconstruction method works
is the wheel graph (i.e.,  the graph formed by connecting a central vertex to all vertices of a cycle of $n-1$ vertices). This
graph has treewidth $3$, maximum degree $n-1$ but low fragmentation. 
\end{remark}

\begin{remark}
The problem of computing the treewidth of a graph given its adjacency matrix is NP-hard~\cite{arnborg1987complexity} (see~\cite{bodlaender2016c} for an account on the history of the problem and references on current results). Hence it is not surprising to have an exponential dependence on treewidth. Note however that in our case we do not have access to edge queries but to separation queries, in contrast to the traditional setting. We are not aware of algorithmic results in this setting. We expect that similar hardness results should hold but proving such hardness seems non-trivial and outside of the scope of this paper.
\end{remark}

\begin{proof}[Proof of Theorem~\ref{th:mainmain}]
We refer to all operations in ${\tt Reconstruct}([n],\emptyset)$ excluding operations in subsequent calls ${\tt Reconstruct}(C_i,S')$ as the zeroth recursion level of the algorithm. Similarly, the operations of all ${\tt Reconstruct}(C_i,S')$ for $i=1,\ldots,d$ apart from their subsequent calls are called the first recursion level. We extend this definition iteratively to the $t$-th recursion level for $t>1$. 


Assume initially that ${\tt Components}$ never fails, that is, $|S'|\leq k+1$ and for each connected component $C\in \cC^S$ it holds that $|C|\leq 0.93|V|$, as stated in Proposition~\ref{prop:dsamplesep}. We will bound the total probability of failure later in the proof. Since the algorithm recurses on sets $C_i$ of size at most $0.93|V|$, the recursion depth (maximal number of recursion levels) is at most $\log_{100/93} n$, which is of order $\mathcal O(\log(n))$. Moreover, in each call of ${\tt Reconstruct}(V,S)$ always $|S|=\mathcal{O}(k\log n)$.

We start with the analysis of ${\tt Components}(V,S)$. Assume first that $|V|>m$ and write $s=|S|$ and $s'=|S'|$. Finding a balanced partition $A,B$ of $W$ in ${\tt Separator}$ is achieved by exhaustively searching all $<2^{m}$ balanced partitions and computing the rank of the associated matrices $\Sigma_{ASS',BSS'}$, which gives query complexity $\mathcal O (2^m(m+s)^2)$ and time complexity $\mathcal O(2^{m}(m+s)^3)$ for this step. Taking into account that $m=\mathcal O(k\log k)$ and $s=\mathcal{O}(k\log n)$, we 
obtain complexity  
$$\mathcal O (2^ms^2)\; \;\mathrm{and} \;\;\mathcal O(2^{m}s^3)$$
for queries and time, respectively. Then, given a balanced split, ${\tt ABSeparator}(V,A,B,S)$ finds a separator of $A$ and $B$ in $\mathcal O(|V|(m+s)^2)$ queries and $\mathcal O(|V|(m+s)^3)$ time. This bound can be obtained by a simple modification of  Proposition~\ref{prop:ABSep}, which gives bounds for ${\tt ABSeparator}(A,B)$. Hence we obtain complexity  
$$\mathcal O (|V|s^2)\;\; \;\mathrm{and} \;\;\;\mathcal O(|V|s^3)$$ 
for queries and time, respectively. 

Since removing at most  $s'$ vertices of degree at most $d$ splits the graph into at most $ds'$ connected components,
splitting $V$ into the connected components $C_i$ requires $\mathcal O(|V|s'ds)$ queries and $\mathcal O(|V|s'ds^3 )$ time, 
and since $s=\mathcal O(k\log n)$,
this gives query and time complexity bounds
$$\mathcal O (|V|dk^2\log n)\; \;\mathrm{and} \;\;\mathcal O(|V|dk^4\log^3 n).$$ 
In the case where $|V|\leq m$ , we obtain queries and time of the order $\mathcal O (s^2)$ and $\mathcal O(s^3)$ respectively. These terms are dominated by the terms that appeared for earlier steps of the algorithm and will be ignored in what follows. 
Overall, \emph{at each recursion level} this part of the algorithm requires 
\begin{equation*}\mathcal O\left(n2^mk^2\log^2n+nk^2\log^2n+ndk^2\log n\right)\end{equation*}
queries and
\begin{equation*}\mathcal O(n2^mk^3\log^3n+nk^3\log^3n+ndk^4\log^3n)
\end{equation*}
time, since the vertex sets $V$ are disjoint and there can be up to $O(n)$ calls to ${\tt Separator}$ at the bottom levels of recursion. Using the fact that $m=\mathcal O(k\log k)$ and simplifying, we obtain:
\begin{equation}\label{query1}\mathcal O\left(2^{\mathcal{O}(k\log k)}k^2n\log ^2 n+dk^2n\log^2 n\right)\end{equation}
queries and
\begin{equation}\label{time1}\mathcal O(2^{\mathcal{O}(k\log k)}k^3n\log ^3 n+dk^4n\log ^3 n )\end{equation}time. 

After calling ${\tt Components}(V,S)$, ${\tt Reconstruct}$ obtains $\wh K_{C_i}$ (we focus on a fixed recursion level so we can ignore the recursive call of ${\tt Reconstruct}(V,S)$) and computes the matrices $\wh K_{C_i,S'}$. After these matrices are computed for all components $C_i$, the algorithm computes  $\wh K_{S',S'}$. For these computations we need to calculate the conditional covariance matrices $\Sigma_{C_i,S'|S}$ and $\Sigma_{S'|S}$. The time to compute each $\Sigma_{C_i,S'|S}=\Sigma_{C_i,S'}-\Sigma_{C_i,S}\Sigma_{S}^{-1}\Sigma_{S,S'}$ requires 
$\mathcal O(|C_i|s+s^2)$ and $\mathcal O(|C_i|s^2+s^3)$ queries and time, respectively, hence 
$$\mathcal O(|V|s+s^2dk\log n)\;\;\mathrm{and} \;\;\mathcal O(|V|s^2+s^3dk\log n),$$ for all $C_i$.
 Computing $\wh K_{C_i,S'}=\wh K_{C_i}\Sigma_{C_i,S'|S}\Sigma_{S'}^{-1}$ requires only $\mathcal O(k^2)$ additional queries (we use $s'\leq k+1$). The time complexity of this computation is dominated by the time needed to compute $\wh K_{C_i}\Sigma_{C_i,S'|S}$. A naive method of computing $\wh K_{C_i}\Sigma_{C_i,S'|S}$ would require $\mathcal O(|C_i|^2k)$ time, which is too time-consuming for our purposes. However, we can take advantage of the fact that the subgraph $G[C_i]$ has treewidth at most $k$ (by the remark following Proposition~\ref{treewidth}) and the number of edges in such a graph is at most $|C_i|k$.
This implies that there are at most $k|C_i|$ non-zero entries in $\wh K_{C_i}$ and so the multiplication takes time $\mathcal{O}( k^2|C_i|)$. Considering all $C_i$, we obtain time complexity
$$\mathcal{O}( k^2|V|)$$
for this step.
 Finally, to compute $\wh K_{S'}$ we need additional $\mathcal O(sk)$ queries for $\Sigma_{S,S'}$ and $\mathcal O(|V|k^2+dk^3)$ time to compute $\wh K_{S'}$. However, since $s=\mathcal O(k\log n)$, these terms are clearly dominated by complexity of the preceding steps in the algorithm and so they will be ignored in what follows.  Overall, at each recursion level this part of the algorithm requires 
\begin{equation*}\mathcal O\left( ns+ns^2dk\log n\right)\qquad\mbox{and}\qquad \mathcal O(ns^2+ns^3dk\log n+k^2n)\end{equation*}
queries and time respectively. Using the fact that $m=\mathcal O(k\log k)$ and $s=\mathcal O(k\log n)$ and simplifying, we obtain: 
\begin{equation}\label{query2}\mathcal O\left(kn\log n+dk^3n\log ^3 n\right)\end{equation}
queries and
\begin{equation}\label{time2}\mathcal O(k^2n\log ^2n +dk^4n\log ^4n )\end{equation}time. 
 
The total complexity for a fixed recursive level are obtained by combining (\ref{query1})--(\ref{time2}). Taking into account the recursion depth $\mathcal O(\log(n))$ we get the stated overall complexity bounds. 

Let $I_i$ be the indicator variable that the $i$-th call of ${\tt Components}$ succeeds. This happens with $\PROB(I_i=1)=1-\tau\geq 2/3$. Let $\alpha =0.93$.  Consider a given recursion path from the root to a leaf in the recursion tree. There are at most $\log_{1/\alpha} n$ calls with $I_i=1$ in such a recursion path. Since the $I_i$ are independent, we can use Hoeffding's inequality to bound the probability that we have less than $\log _{1/\alpha}n$ successes in $N=3\log _{1/\alpha}n$ calls: 
$$
\PROB\left(\sum_{i=1}^N I_i<\frac{1}{3} N\right)\;\leq\;e^{-\frac{2}{9}N}\;=\;n^{-\frac{2}{3\log(1/\alpha)}}\;\leq\; n^{-9}.
$$
This argument implies that a fixed path from the root to a leaf in the recursion tree is logarithmic with high probability. There are at most $n$ such paths. Hence, by the union bound and the condition for $n$, the probability that there exists one of them with more than logarithmic length is bounded by $1/n^{8}$. 
\end{proof}

\section{Tree reconstruction using noisy covariance oracles}
\label{sec:finite}

In this paper we focus on the noiseless setting when entries of a
matrix $\Sigma\in \mathcal M(G)$ may be accessed by a learner and
these values are available exactly, without error -- this is our
``covariance oracle.'' In other words, we have shown that in many
cases it is possible to invert the positive definite matrix $\Sigma$ after seeing a
tiny, adaptively chosen, fraction of its entries.

In many learning problems, the entries of $\Sigma$ are not available
exactly. This is the case in statistical problems when $\Sigma$ is an
unknown covariance matrix of a random vector $X$ and its entries may
be estimated from data. In this section we discuss how the results of
this paper may be extended to situations when covariances may not be
observed exactly, for example, due to statistical fluctuations.

Here we limit ourselves to the study of the case when the
underlying graph $G$ is a tree. We show how Algorithm~\ref{tmain} may
be modified to handle noise and establish sufficient conditions that
guarantee correct recovery. Along similar lines, one may also modify the other algorithms
introduced in this paper (for recovery of tree-like graphs and graphs
of bounded treewidth). However, the details are somewhat 
tedious and go beyond the scope of this paper.

In order to simplify the presentation, we assume that all diagonal
elements of $\Sigma$ are equal to $1$, that is, $\Sigma$ is a
correlation matrix with entries $\sigma_{ij}$ for $i\neq j$.
The extension of the general case is straightforward, at the price of
visually more complicated formulas.

In the discussion that follows, we assume that the noisy covariance
oracle, when queried for the $(i,j)$-th entry of $\Sigma$, returns a
value $\wh{\sigma}_{ij}\in [-1,1]$ satisfying 
\begin{equation}\label{eq:4002}
\max _{ij}\left|\wh{\sigma}_{ij}-\sigma_{ij}\right|<\epsilon
\end{equation}
for some $\epsilon \in (0,1)$. We assume that $\wh\sigma_{ii}=1$ for all $i$. In a statistical setting when $\Sigma$ is the covariance matrix of a
random vector $X$ and one may obtain independent samples of $X$, it is
easy to construct such a noisy covariance oracle. We discuss this in more detail
at the end of the section.

In the noise-free case we required that the graph $\mathcal G(\Sigma)$
is connected and generic, or, equivalently, $\Sigma$ does not have any 
entry in $\{-1,0,1\}$.
In the presence of noise,  because of problems of identifiability, 
we need a stronger assumptions on the entries of $\Sigma$
corresponding to edges of the graph. In particular, we assume that there exist constants $0<\delta<\gamma<1$ such that
\begin{equation}\label{bound3}
\delta \le  \left| \sigma_{ij}\right| \le \gamma \qquad\mbox{for all }ij\in E~.
\end{equation}
By the product formula \eqref{eq:rhoedge}, this implies that for any two distinct $i,j\in V$,
\[
\left| \sigma_{ij}\right| \ge \delta^D~,
\]
where $D$ is the diameter of the graph.
Although the diameter of $G$ played no role in the noise-free setting,
in the noisy case the recovery guarantees crucially depend on $D$. In
particular, our recovery guarantees are only meaningful when $D$ is
logarithmic in $1/\epsilon$.
To see why this happens, note that under assumption \eqref{bound3},
$|\sigma_{ij}| \le \gamma^{d(i,j)}$ where $d(i,j)$ denotes the distance
of vertex $i$ and vertex $j$ in the tree. This value becomes
indistinguishable from zero by the noisy covariance oracle unless $d(i,j) <
\log(1/\epsilon)/\log(1/\gamma)$.

\begin{remark}
	The assumption that the diameter of $G$  is small is
        relatively mild. Many important real life examples of complex
        networks have small diameter -- these are the so-called small-world networks. For example the diameter of the world wide web, with way over billion nodes \cite{van2016estimating}, is around $19$ \cite{albert1999internet}, while social networks with over six billion individuals are believed to have a diameter of around six \cite{milgram1967small}. Small-world networks have also found applications in brain study \cite{bassett2006small}.
 \end{remark}

Next we show how Algorithm~\ref{tmain} may be modified so that it tolerates noise of
magnitude $\epsilon$ -- in the sense of \eqref{eq:4002}. 

Algorithm~\ref{tmain}  uses the covariance oracle in order to check whether $\det \left(\Sigma _{ij,jk}\right) =0$
for some triples of vertices $i,j,k$, or equivalently, whether $\sigma_{ij}\sigma_{jk}=\sigma_{ik}$.
Also, the algorithm sorts all the correlations $\sigma _{uw}$ for a fixed $w$. 
We show that both of these steps can be correctly executed with a noisy covariance oracle if
\begin{equation}
\label{eq:epscond}
\epsilon \;\le\; \frac{1}{8}\delta^D(1-\gamma^2)~.
\end{equation}
Under \eqref{eq:epscond}, we may choose a value $\tau$ such that
\[
 \tau > 3\epsilon \quad \text{and}\quad    
\tau \leq \delta^D\left(1-\gamma^2\right)-3\epsilon~.
\]
In order to test whether $\sigma_{ij}\sigma_{jk}=\sigma_{ik}$, we use the decision
\begin{eqnarray*}
\text{if}\quad &\left|\wh{\sigma}_{ij}\wh{\sigma}_{jk}-\wh{\sigma}_{ik}\right|< \tau\quad&\text{accept}\\
\text{if}\quad &\left|\wh{\sigma}_{ij}\wh{\sigma}_{jk}-\wh{\sigma}_{ik} \right| > \tau\quad&\text{reject}~.
\end{eqnarray*}

To see why the decision is correct, observe first that for all $i,j\in [n]$,
\begin{equation}
\left|\sigma_{ij}\sigma_{jk}-\wh{\sigma}_{ij}\wh{\sigma}_{jk}\right| 
=\left|\sigma_{jk}\left(\sigma_{ij}-\wh{\sigma}_{ij}\right)+\wh{\sigma}_{ij}\left(\sigma_{jk}-\wh{\sigma}_{jk}\right)\right|\leq 2\epsilon~.\label{eq:305}
\end{equation}
Hence, if $\sigma_{ij}\sigma_{jk}=\sigma_{ik}$, then
\[
   \left|\wh{\sigma}_{ij}\wh{\sigma}_{jk}-\wh{\sigma}_{ik}\right| \le 3\epsilon < \tau
\]
and the decision is correct.

We now treat the case where $\sigma_{ij}\sigma_{jk}\neq \sigma_{ik}$. We consider two cases.

\noindent
\emph{Case I}: There exists a vertex $m$ that separates all $i,j,k$, that is,
\[
\sigma_{im}\sigma_{mk}=\sigma_{ik}\quad \text{and}\quad \sigma_{jm}\sigma_{mk}=\sigma_{jk}~.
\]
In this case
\begin{eqnarray}
\left|\sigma_{ij}\sigma_{jk}-\sigma_{ik} \right|&=& \left|\sigma_{im}\sigma_{jm}^2\sigma_{km}-\sigma_{im}\sigma_{km} \right|\nonumber \\
&=& \left|\sigma_{ik}\right|\left|\sigma_{jm}^2-1\right|\nonumber \\
&\geq&\delta^D\left(1-\gamma^2\right)~.\label{eq:705}
\end{eqnarray}
But then, arguing as above,
\begin{eqnarray*}
\left|\wh{\sigma}_{ij}\wh{\sigma}_{jk}-\wh{\sigma}_{ik}\right| &\geq & \left|\sigma_{ij}\sigma_{jk}-\sigma_{ik} \right|-3\epsilon \\
&\geq & \delta^D\left(1-\gamma^2\right)-3\epsilon > \tau
\end{eqnarray*}
and the decision is once again correct. It remains to consider

\noindent
\emph{Case II}: Either $i$ separates $j$ and $k$ or $k$ separates $i$ and $j$.  Without loss of generality, assume the latter, so that $\sigma_{ij}=\sigma_{ik}\sigma_{jk}$.  Then
\begin{eqnarray*}
\left|\sigma_{ij}\sigma_{jk}-\sigma_{ik}\right| &=& \left|\sigma_{ik}\sigma_{jk}^2-\sigma_{ik}\right|\\
&=& \left|\sigma_{ik}\right|\left|\sigma_{jk}^2-1\right|\\
&\geq & \delta^D \left(1-\gamma^2\right)~.
\end{eqnarray*}
Hence, similarly to Case I, we have
\begin{eqnarray*}
\left|\wh{\sigma}_{ij}\wh{\sigma}_{jk}-\wh{\sigma}_{ik} \right|&\geq & \left|\sigma_{ij}\sigma_{jk}-\sigma_{ik} \right|-3\epsilon \\
&\geq & \delta^D\left(1-\gamma^2\right)-3\epsilon > \tau ~,  
\end{eqnarray*}
which proves correctness of the testing procedure.

The only other ingredient of Algorithm~\ref{tmain} that uses the covariance oracle 
{performs} queries of the form $\left|\sigma _{uw}\right|<\left|\sigma_{vw}\right|$. In the 
presence of a noisy covariance oracle satisfying \eqref{eq:4002}, we may use the following rule:
\begin{eqnarray*}
\text{if }\left|\wh{\sigma}_{uw}\right|<\left|\wh{\sigma}_{vw}\right|-2\epsilon  &\text{accept}\\
\text{if }\left|\wh{\sigma}_{vw}\right|<\left|\wh{\sigma}_{uw}\right|-2\epsilon &\text{reject}\\
\text{otherwise } &\text{do either~.}
\end{eqnarray*}

We now show that the decision rule accepts and rejects correctly. In the first two cases, the decision is 
clearly correct. Hence, we only need to examine the case when
\[
\left|\wh{\sigma}_{uw}\right|\geq\left|\wh{\sigma}_{vw}\right|-2\epsilon \quad \text{and}\quad\left|\wh{\sigma}_{vw}\right|\geq\left|\wh{\sigma}_{uw}\right|-2\epsilon
\]
happen simultaneously. In such case,
\[
\left|\left|\wh{\sigma}_{uw}\right|-\left|\wh{\sigma}_{vw}\right|\right|\leq 2\epsilon ~.
\]
which implies 
$$ \left|\left|\sigma_{uw}\right|-\left|\sigma_{vw}\right|\right|\leq 4\epsilon~.$$
In terms of the ordering of correlations, it is enough that for a central vertex $w$ in Algorithm~\ref{split} the following holds:  if
\begin{enumerate}
	\item [(i)] $w$ does not separate $u$ and $v$ and
	\item [(ii)] $v$ is the neighbor of $w$
\end{enumerate}
then {$|\wh\sigma_{vw}|
>|\wh\sigma_{uw}|$}. Indeed, note that the only thing that matters is that, at each step of the algorithm,  all vertices in the same connected component of $G\setminus w$ are sorted after the unique neighbour of $w$ that belongs in that component. But if (i) and (ii) hold then
\[
\left|\left|\sigma_{uw}\right|-\left|\sigma_{vw}\right|\right|\;=\;|\sigma_{vw}|(1-|\sigma_{uv}|) \geq \delta(1-\gamma)> 4 \epsilon~,
\]
where the last inequality follows by \eqref{eq:epscond}.

This concludes the proof of correctness of the modified procedure under condition \eqref{eq:epscond}.

We close this section by noting that a noisy covariance oracle satisfying \eqref{eq:4002} may easily be
constructed when $\Sigma$ is the covariance matrix of a zero-mean random vector $X=(X_1,\ldots,X_n)$.
All one needs is that, for each pair of indices $i,j\in [n]$ queried by the algorithm, one may 
obtain $N$ i.i.d.\ samples of the pair $(X_i,X_j)$. For example, if all components of $X$ have a bounded
fourth moment, say $\EXP X_i^4 \le \kappa$ for all $i\in [n]$ for some $\kappa >0$, then
$\var(X_iX_j) \le \kappa$, and therefore one may use robust mean estimators (see, e.g., \cite[Theorem 2]{LuMe19})
to estimate $\sigma_{ij} = \EXP[X_iX_j]$
to obtain $\wh{\sigma}_{ij}$ that  satisfy \eqref{eq:epscond} (simultaneously, for all $i,j \in [n]$), with
probability at least $1-\eta$, whenever 
\[
    N \ge 32 \left(\frac{\kappa}{\epsilon}\right)^2 \log \frac{n}{\eta}~.
\]

\acks{We would like to thank Robert Castelo, Vida Dujmovi\'{c}, and David Rossell for helpful discussions. 
GL, VV, and PZ were supported by
the Spanish Ministry of Economy and Competitiveness,
Grant PGC2018-101643-B-I00 and FEDER, EU. 
GL and PZ acknowledge the support of 
``High-dimensional problems in structured probabilistic models - Ayudas Fundaci\'on BBVA a Equipos de Investigaci\'on Cientifica 2017''. 
GL was supported by ``Google Focused Award Algorithms and Learning for AI'' and PZ by Beatriu de Pin\'{o}s grant (BP-2016-00002) and Ram\'{o}n y Cajal (RYC-2017-22544).}

\newpage

\appendix


\appendix

\section{Modified algorithms}\label{app:algs}

Here we present the modified versions of Algorithms \ref{alg:sep0}, \ref{alg:separator}, and \ref{alg:components} that use
conditional covariance information, as described in Section \ref{sec:boundedTW}.

\begin{algorithm}
\label{alg:sep2}
$U \gets \emptyset$\;
$r = {\rm rank}(\Sigma_{AS,BS})$\;
\ForAll{$v\in V$}{\If{${\rm rank}(\Sigma_{ASv,BSv})=r$}{$U\gets U\cup \{v\}$\;}}
$C\gets \{v_0\}$ for some $v_0\in U$\;
\ForAll{$u\in U\setminus \{v_0\}$}{\uIf{${\rm rank}(\Sigma_{ASCu,BSCu})=r$}{
    $C\gets C\cup \{u\}$ \;
  } }
    \Return{ $C$\;}
\caption{${\tt ABSeparator}(V,A,B,S)$}
\end{algorithm}

\begin{algorithm}
Pick a set $W \subset V$ by taking $m$ vertices uniformly at random, where $m$ satisfies (\ref{eq:boundW}) with $r=11k$ and $\delta=1/24$\;
Search exhaustively through all partitions of $W$ into sets
$ A,B$ with $|A|,|B|\leq \frac{2}{3}|W|$, minimizing ${\rm rank}(\Sigma_{AS,BS})$\;
If no balanced split exists, output any partition $ A,B$ of $W$\;
$S'\gets {\tt ABSeparator}(V,A,B,S)$\;
\Return{ $S'$}
\caption{${\tt Separator}(V,S)$}
\end{algorithm}

\begin{algorithm}
  $S'\leftarrow {\tt Separator}(V)$\; 
  $r\gets |S'|+|S|$\;  
  $R \gets \emptyset $\tcp*{will contain one vertex from each $C\in \cC^{S'}$}
   	\For{$v\in V\setminus S'$}{

    $notFound \gets True$\;
   \For{$u\in R$}{
  	\If{${\rm rank}(\Sigma_{uSS',vSS'}) = r+1$ }{$C_u\leftarrow C_u\cup\{v\}$\; $notFound \gets False$\;}}
    \If{$notFound$}{create $C_{v}= \left\{v\right\}$\;$R\leftarrow R\cup\{v\}$\;} }
  	\Return{  $S'$ and all $C_j$\;}
\label{tlsplit22}
\caption{${\tt Components}(V,S)$}
\end{algorithm}

\vskip 0.2in
\bibliography{mref}

\end{document}